\documentclass[12pt,twoside,english,american]{scrartcl}
\usepackage{helvet}
\usepackage{url}
\usepackage[T1]{fontenc}
\usepackage[latin9]{inputenc}
\setcounter{tocdepth}{5}
\setlength{\parskip}{\medskipamount}
\setlength{\parindent}{0pt}
\usepackage{amsmath}
\usepackage{scrpage2}
\usepackage{setspace}
\usepackage{amssymb}
\usepackage{enumerate}
\usepackage{mathtools}
\allowdisplaybreaks

\makeatletter


 \usepackage{amsthm}
 \theoremstyle{plain}
\newtheorem{thm}{Theorem}[section]
  \theoremstyle{plain}
  \newtheorem*{thm*}{Theorem}
  \theoremstyle{plain}
  
  \theoremstyle{remark}
  \newtheorem{rem}[thm]{Remark}
  \theoremstyle{definition}
  
 \theoremstyle{definition}
 \newtheorem*{defn*}{Definition}
  \theoremstyle{plain}
  
 \theoremstyle{definition}
  
  \theoremstyle{remark}


\usepackage[T1]{fontenc}    

\addtolength{\headheight}{2pt}
\usepackage{tu-preprint}
\usepackage{amsmath}

\usepackage{euscript}
\usepackage{mathabx}
\allowdisplaybreaks[1] 
\usepackage{amsfonts}
\newenvironment{keywords}{ \noindent\footnotesize\textbf{Keywords and phrases:}}{}

\newenvironment{class}{\noindent\footnotesize\textbf{Mathematics subject classification 2010:}}{}

\usepackage{color,tu-preprint}

\newcommand{\Abs}[1]{\left\lVert#1\right\rVert}


\renewcommand*{\div}{\operatorname{div}}
\newcommand*{\curl}{\operatorname{curl}}
\newcommand{\R}{\mathbb{R}}
\newcommand*{\Grad}{\operatorname{Grad}}

\newcommand*{\abs}[1]{\lvert#1\rvert}

\newcommand*{\grad}{\operatorname{grad}}

\renewcommand*{\i}{\mathrm{i}}
\newcommand{\s}[1]{\mathcal{#1}}



\DeclareMathAccent{\Circ}{\mathalpha}{operators}{"17}
\newcommand{\interior}[1]{\Circ{#1}}
\renewcommand{\Im}{\operatorname{\mathfrak{Im}}}
\renewcommand{\Re}{\operatorname{\mathfrak{Re}}}


\theoremstyle{plain}
\newtheorem{Sa}[subsection]{Theorem}
\newtheorem{Sa*}[section]{Theorem}
\newtheorem{Le}[subsection]{Lemma}
\newtheorem{Le*}[section]{Lemma}
\newtheorem{Fo}[subsection]{Corollary}
\newtheorem{Fo*}[subsection]{Corollary}
\newtheorem{Prop}[subsection]{Proposition}
\newtheorem{Prop*}[section]{Proposition}

\theoremstyle{definition}
\newtheorem*{Def}{Definition}
\newtheorem{Ass}[subsection]{Assumption}
\newtheorem{Bei}[subsection]{Example}

\theoremstyle{remark}

\newtheorem{rems}[subsection]{Remark}

 \numberwithin{equation}{section}

\DeclareMathOperator{\TextRe}{Re}
\DeclareMathOperator{\TextIm}{Im}
\renewcommand{\Re}{\TextRe}
\renewcommand{\Im}{\TextIm}

\newcommand{\N}{\mathbb{N}}
\newcommand{\Z}{\mathbb{Z}}

\newcommand{\C}{\mathbb{C}}

\newcommand{\eps}{\varepsilon}

\DeclareMathOperator{\1}{\chi}

\newcommand{\dd}{\ \mathrm{d}}

\DeclareMathOperator{\Diverg}{Div}
\DeclareMathOperator{\diverg}{div}

\newcommand{\tor}[2]{\stackrel{#1\to #2}{\longrightarrow}}

\newcommand{\ben}{\begin{enumerate}[(i)]}
\newcommand{\een}{\end{enumerate}}

\renewcommand{\hat}{\widehat}
\renewcommand{\tilde}{\widetilde}
\renewcommand*{\epsilon}{\varepsilon}
\renewcommand*{\rho}{\varrho}

\arraycolsep2pt

\makeatother

\usepackage{babel}

\begin{document}
\selectlanguage{english}%
\institut{Institut f\"ur Analysis / Department of Mathematical Sciences}

\preprintnumber{MATH-AN-07-2012}

\preprinttitle{On the theory of homogenization of evolutionary equations in Hilbert spaces}

\author{Marcus Waurick}


\selectlanguage{american}%
\setcounter{section}{-1}


\title{On the homogenization of partial integro-differential-algebraic equations}

\author{ Marcus Waurick\\
 TU Dresden / University of Bath\\
 marcus.waurick@tu-dresden.de }
\maketitle
\begin{abstract}
We present a Hilbert space perspective to homogenization of standard linear evolutionary boundary value problems in mathematical physics and provide a unified treatment for (non-)periodic homogenization problems in thermodynamics, elasticity, electro-magnetism and coupled systems thereof. The approach permits the consideration of memory problems as well as differential-algebraic equations. We show that the limit equation is well-posed and causal. We rely on techniques from functional analysis and operator theory only.
\end{abstract}
\setcounter{section}{-1}

\begin{keywords} Homogenization; Partial Differential Equations; Delay and Memory Effects; Coupled Systems; Multiphysics; $G$-convergence \end{keywords}

\begin{class} 74Q15 (Effective constitutive equations), 35B27 (Homogenization; equations in media with periodic structure), 35Q74 (PDEs in connection with mechanics of deformable solids), 35Q61 (Maxwell equations), 35Q79 (PDEs in connection with classical thermodynamics and heat transfer) \end{class}

\newpage
\tableofcontents{}

\section{Introduction}

In the mathematical treatment of physical phenomena of heterogeneous materials one is confronted with partial differential equations with variable coefficients. Sometimes one observes that there is a scale separation, that is, there is a ``large'' and a ``small'' scale and the heterogeneities occur on the ``small'' scale only. In consequence, the coefficients are highly oscillatory. Therefore, the computational effort for solving these equations is considerably high. That is why one seeks for a replacement for the partial differential equation with heterogeneous coefficients, such that this replacement is easier to solve with a computer. In some cases, it is possible to derive such a replacement by studying the limit behavior of the equation by letting the ratio of ``small'' scale over ``large'' scale tend to $0$. So, one asks whether the solutions corresponding to strictly positive ratio converge in some sense for the ratio tending to $0$. Given the convergence of the solutions, one asks further, whether the limit satisfies an equation similar to the ones one started out with. A main objective in homogenization theory is to show the convergence of the solutions as the ratio tends to zero and to derive the limit equation. 

If one assumes periodicity in the coefficients, many results are available for particular equations, see e.g.~\cite{BenLiPap,CioDon,TarIntro} as general references. In the non-periodic case one cannot expect a similar behavior as very simple equations show, see e.g.~\cite[p. x, equation $(*)$]{Waurick2011}. 

In this note, we discuss a general compactness result: Given a bounded sequence of coefficients, we prove that there exists a limit equation at least for a subsequence. We emphasize that we do not assume periodicity of the coefficients. Further, we show that this compactness result may be applied to coupled systems or to equations with memory terms. By means of an example, we elaborate on the precise strategy in Section \ref{section: disco}.

In the literature, there are many techniques available that allow the study of homogenization in the non-periodic case. We mention here the method of $H$-convergence in the sense of \cite{GuMo2,Murat1978,TarIntro} or \cite[Definition 13.3]{CioDon}, which is well-suited for elliptic equations. The method of $\Gamma$-convergence is tailored for variational integrals and optimization problems related to them, see e.g.~\cite{ABraid,B3,Mu1}. $G$-convergence and two-scale-convergence, see respectively \cite{Colombini1978,Spagnolo1968a,Gcon1} and \cite{Amar1998,Ng1,Wellander2009}, are concepts that may be applied to many equations of mathematical physics, as well.

The concept of $G$-convergence can be formulated in an abstract Hilbert space setting, see \cite[p.74]{Gcon1} (see also Section \ref{section: hom} in this paper). More precisely, given a sequence of continuously invertible operators $(A_n)_n$ its $G$-limit is an invertible operator $B$, if $(A_n^{-1})_n$ converges to $B^{-1}$ in the weak operator topology. In this general setting, it is unclear how to derive a more explicit expression for $B$. The notion of two-scale convergence uses a $L^2$-setting and does not apply to general Hilbert spaces. For a possible way of dealing with specific non-periodic coefficients, we refer to the generalization of two-scale convergence in \cite{Nguetseng2003,Nguetseng2004}.
 
 Here we provide an alternative way of discussing homogenization problems, which was introduced in \cite{Waurick2011,Waurick,WK2011} with (substantial) extensions in \cite{Waurick2012,W2013FP,W2014F,W2014G,W2013}. The idea bases on results of \cite{PicPhy,Picard}. In \cite{PicPhy,Picard}, a functional analytic Hilbert space framework is developed, which serves to derive a unified solution theory for many equations of mathematical physics. In this exposition, we will use the term \emph{evolutionary equations} to refer to the class described in \cite{PicPhy,Picard}. Note that evolutionary equations cover for example equations from thermoelasticity (\cite{MPTW2014}) or equations with dynamic boundary conditions (\cite{PSTW2015}) or equations typical in control theory with unbounded control and observation operators (\cite{PTW2014}).

Next, we sketch the functional analytic set up of evolutionary equations. For a given forcing term $f$, we consider
\begin{equation}\label{eq:int_1}
    \partial_t w+ A u = f.
\end{equation}
Here $\partial_t$ is a realization of the time-derivative as a normal \emph{continuously invertible} operator, $A$ is a skew-selfadjoint operator in some Hilbert space $H$ modeling the spatial derivatives. We want to solve \eqref{eq:int_1} for the unknown quantities $w$ and $u$. Clearly, the equation \eqref{eq:int_1} is under-determined. Thus, \eqref{eq:int_1} needs to be completed by a \emph{constitutive relation} or \emph{material law} $\s M$, being a continuous linear operator in time-space, which links  $w$ and $u$ via
\begin{equation}\label{eq:int_2}
   \s M u = w.
\end{equation}
Hence, we solve the equation 
\begin{equation}\label{eq:int_3}
   \partial_t \s M u + A u= f
\end{equation} for $u$.
In applications, the operator $\s M$ describes the material's properties, hence the name `material law': The operator $\s M$ consists of the inverse of the conductivity $\kappa$ if one discusses the heat equation. More precisely, the heat equation fits into \eqref{eq:int_3} with the settings
\[
   u=\begin{pmatrix}\theta \\ q\end{pmatrix}, \quad \s M = \begin{pmatrix} 1 & 0 \\ 0 & \partial_t^{-1}\kappa^{-1}\end{pmatrix}, \quad A=\begin{pmatrix} 0 & \diverg \\ \grad & 0\end{pmatrix},\quad f=\begin{pmatrix} Q \\ 0\end{pmatrix}
\]
where $\theta$ and $q$ are the heat and heat flux, respectively, and $Q$ is some external heat source. Indeed, with these setting, \eqref{eq:int_3} reads
\[
   \partial_t \begin{pmatrix} 1 & 0 \\ 0 & \partial_t^{-1}\kappa^{-1}\end{pmatrix} \begin{pmatrix} \theta \\ q \end{pmatrix}+ \begin{pmatrix} 0 & \diverg \\ \grad &  0 \end{pmatrix} \begin{pmatrix} \theta \\ q \end{pmatrix} = \begin{pmatrix} Q \\ 0 \end{pmatrix}.
\]
Reading off the first line, we obtain $\partial_t \theta +\diverg q = Q$. The second line is $q=-\kappa \grad \theta$. Hence, $\partial_t\theta -\diverg \kappa \grad \theta = Q$.

Maxwell's equations for the electro-magnetic field $(E,H)$ read
\[
 \left(\partial_t \begin{pmatrix} \eps & 0 \\ 0 &\mu \end{pmatrix} + \begin{pmatrix} \sigma & 0 \\ 0 & 0 \end{pmatrix}+ \begin{pmatrix} 0 & -\curl \\ \curl &  0 \end{pmatrix}\right) \begin{pmatrix} E\\ H \end{pmatrix} = \begin{pmatrix} J\\ 0 \end{pmatrix},
\]
where $\eps$ is the dielectricity, $\sigma$ is the (electric) conductivity, and $\mu$ is the magnetic permeability. $J$ are external currents. Thus, we obtain the shape of equation \eqref{eq:int_3} with the settings
\[
   u=\begin{pmatrix} E\\ H\end{pmatrix},\quad \s M =\begin{pmatrix} \eps & 0 \\ 0 & \mu \end{pmatrix} + \partial_t^{-1}\begin{pmatrix} \sigma & 0 \\ 0 & 0 \end{pmatrix}, \quad A = \begin{pmatrix} 0 & -\curl \\ \curl &  0 \end{pmatrix},\quad f= \begin{pmatrix} J\\ 0\end{pmatrix}.
\]
In general, we assume here that $\s M$ may be represented as a function of $\partial_t^{-1}$. Examples of such autonomous material laws are time-shifts, fractional time-derivatives or convolutions with respect to the temporal variable, see e.g.~\cite{KPSTW2014,Waurick,W2014F}. 

Our treatment of homogenization problems within this setting boils down to the discussion of continuous dependence on $\s M$ under a suitable topology, see also Section \ref{section: disco} for a more detailed discussion. Consider a sequence of material laws $(\s M_n)_n$ and corresponding solutions $(u_n)_n$ of the equation
\[
   \partial_t \s M_n u_n + Au_n = f.
\]
We ask, whether the sequence $(u_n)_n$ converges to some limit $v$ and whether there is a material law $\s N$, such that $v$ solves
\[
   \partial_t \s N v + A v = f.
\]
In \cite{Waurick2012}, this question was answered for $A$ with compact resolvent. It has been successfully applied to the heat equation, the wave equation or the visco-elastic equations with fractional time-derivatives or ordinary differential equations as constitutive relations, see \cite[Theorem 4.3 and Theorem 4.5]{Waurick2012} or \cite{W2014F}. In this paper, we complement the result obtained in \cite{Waurick2012,W2014F}. Imposing more restrictions on $\s M$, we merely require $A$ to have compact resolvent when restricted to a domain orthogonal to the nullspace of $A$. That is to say, instead of assuming that\footnote{For Hilbert spaces $H_1,H_2$ and a linear operator $A: D(A)\subseteqq H_1 \to H_2$ with domain $D(A)$, we denote the norm in the Hilbert space $H_1$ by $\abs{\cdot}_{H_1}$ and the graph norm of $A$ by $\abs{\cdot}_A$. If $H_1$ is continuously embedded in $H_2$, we write $H_1\hookrightarrow H_2$ or $(H_1,\abs{\cdot}_{H_1})\hookrightarrow (H_2,\abs{\cdot}_{H_2})$. If this embedding is compact we write $H_1\hookrightarrow\hookrightarrow H_2$ or $(H_1,\abs{\cdot}_{H_1})\hookrightarrow\hookrightarrow (H_2,\abs{\cdot}_{H_2})$.} $(D(A),\abs{\cdot}_A)\hookrightarrow \hookrightarrow (H,\abs{\cdot}_H)$, we only assume the following: Any sequence $(u_n)_n$ in $D(A)$ with both $(u_n)_n$ and $(Au_n)_n$ bounded in $H$ as well as $u_n\in N(A)^\bot$ consists of a $H$-norm convergent subsequence. We refer to the latter property as the \emph{nullspace-compactness-property} (or \emph{$(NC)$-property} for short) see also Corollary \ref{Co: Askew} below.

A relevant example for an operator satisfying the $(NC)$-property is the Maxwell operator $A=\begin{pmatrix} 0 & -\curl \\ {\interior{\curl}} & 0 \end{pmatrix}$ in $L_2(\Omega)^6$, where ${\interior{\curl}}$ is the $\curl$ operator with electric boundary condition and $\Omega\subseteqq \R^3$ is a bounded domain satisfying additional geometric requirements, cf.~\cite[Theorem 2]{Witsch1993} or \cite{Picard1984,Picard2001}; other boundary conditions may also be admitted, see \cite{BPS16}.

Furthermore, in \cite{Waurick2011} it is shown that the operator $\begin{pmatrix} 0& \diverg \\ {\interior{\grad}} & 0 \end{pmatrix}$ defined in $L_2(\Omega)^{N+1}$ satisfies the $(NC)$-property, where $\Omega\subseteqq \R^{N}$ is open and bounded, ${\interior{\grad}}$ is the distributional gradient in $L_2(\Omega)$ with domain equal to $W_{2,0}^1(\Omega)$. The superscript `` $\interior{}$ '' refers to Dirichlet boundary conditions and $\diverg$ is the negative adjoint of ${\interior{\grad}}$. The same reasoning can be applied to the spatial operator of the elastic equations $\begin{pmatrix} 0& \Grad^* \\ -\Grad & 0 \end{pmatrix}$, where $\Grad$ is the symmetrized gradient as defined in \cite[Definition 4.9]{Waurick2012} with some boundary conditions imposed on a bounded domain $\Omega$ satisfying suitable geometric requirements, cf.~e.g.~\cite[Theorem 2]{Weck1994}. 

Our main theorem,  Theorem \ref{Thm:(P2)}, may be seen as a general theorem giving a compactness result for the homogenization of (coupled) equations in mathematical physics. We shall also mention that the results obtained in this article not only generalize \cite[Theorem 2.3.14]{Waurick2011} but improve the representation of the homogenized equations. Moreover, we show that the homogenized equations satisfy the assumptions of Theorem \ref{Th: SolTh}, that is, we have a solution theory for the effective equations.

 A detailed discussion of our main result is given in Section \ref{section: disco}. In this section we also give an account of the ideas used and compare it to other strategies in the literature.

We sketch our plan to achieve our main result as follows. In Section \ref{section: set}, we discuss the mathematical framework of evolutionary equations and recall the main theorem of \cite{PicPhy}. Section \ref{section: top} sketches the ideas, definitions and main theorems of \cite{Waurick} and \cite{Waurick2012}. In Section \ref{section: hom} we present our main result, Theorem \ref{Thm:(P2)}. We show optimality of this theorem by means of counterexamples. The proofs in Section \ref{section: hom} also require the results from Section \ref{section: aux}, where some technical tools are provided. The results from Section \ref{section: aux} are needed to prove the well-posedness of the limit equation constructed in Theorem \ref{Thm:(P2)}. The abstract results obtained in Section \ref{section: hom} are exemplified in Section \ref{section: Ex}.
%

We indicate weak convergence in a Hilbert space by `$\rightharpoonup$' or `$\textnormal{w-}\lim$'. Norm-convergence will be denoted by `$\to$' if not specified differently. The underlying scalar field of any vector space discussed here is $\C$.

\section{Discussion of the main result}\label{section: disco}

%
%

Our aim is to provide homogenization results for a large class of partial differential equations. The homogenized coefficients, however, have a different representation as in classical results in the literature, cf.\ e.g.\ \cite{BenLiPap,Spagnolo1968a}. We illustrate the difference between the classical approach and the approach considered here with the heat equation (cf.\ e.g.\ \cite[p. 350]{Picard}):

Let $\Omega\subseteqq \R^N$ be a bounded domain. Denote by $\theta \colon (0,\infty)\times\Omega\to \R$ the heat distribution and by $q\colon (0,\infty)\times \Omega\to\R^N$ the heat flux. The heat equation is a system of the two equations
\begin{equation}\label{eq:phys}
  \begin{cases}
     \partial_t \theta + \diverg q = f & \\
     q = -\kappa \grad \theta. &
  \end{cases}
\end{equation}
Here $\partial_t \theta$ denotes the time-derivative of $\theta$, $f$ is a given source term and $\kappa \colon \R^N \to \R^{N\times N}$ is a bounded function being the (material dependent, symmetric) conductivity tensor satisfying $\kappa(x)\geqq c$ for some $c>0$ and all $x\in \R^N$.  Of course, $\theta$ and $q$ are the unknowns in the system. The first equation in \eqref{eq:phys} is called the \emph{heat flux balance} and the second one is \emph{Fourier's law}. The system is completed by boundary and initial conditions. For simplicity, we assume both homogeneous Dirichlet boundary conditions and homogeneous initial conditions for $\theta$. As a reminder for Dirichlet boundary conditions, we shall write ${\interior{\grad}}$ instead of $\grad$.

The classical way of discussing the heat equation is to substitute Fourier's law into the heat flux balance. Thus, the heat equation reads
\begin{equation}\label{eq:class}
  \partial_t \theta-\diverg\kappa{\interior{\grad}} \theta = f.
\end{equation}

Next, assume that $\kappa$ is $(0,1)^N$-periodic, that is, $\kappa(x+e)=\kappa(x)$ for all $x\in \mathbb{R}^N$, $e\in \mathbb{Z}^N$. In homogenization theory one is interested in the effective behavior of solutions of equations with highly oscillatory coefficients. A possible way to model that is the following. For $n\in\N$ consider the solutions $(\theta_n,q_n)$ and $\theta_n$ of the following respective equations
\begin{equation}\label{eq:phys_seq}
 \begin{cases}
     \partial_t \theta_n + \diverg q_n = f & \\
     q_n = -\kappa(n\cdot) {\interior{\grad}} \theta_n. &
  \end{cases}
\end{equation}
and
\begin{equation}\label{eq:sec_ord_seq}
  \partial_t{\theta}_n-\diverg\kappa(n\cdot){\interior{\grad}} \theta_n = f.
\end{equation}
Standard a priori estimates imply that (possibly after passing to a subsequence) $(\theta_n,q_n)_n$ and $(\theta_n)_n$ converge weakly to some functions $(\theta,q)$ and $\theta$, respectively. Classically, in order to determine the heat distribution $\theta$, one shows that $\theta$ solves $\partial_t{\theta}-\diverg\kappa_0{\interior{\grad}} \theta=f$. Here $\kappa_0$ is a well-known (constant-coefficient-)matrix. The main step in the classical approach is to prove 
\[\int_\Omega \kappa(n x){\interior{\grad}} \theta_n(t,x)\cdot {\interior{\grad}} \theta_n(t,x)\dd x \to \int_\Omega \kappa_0{\interior{\grad}}\theta(t,x) \cdot{\interior{\grad}} \theta (t,x)\dd x\quad (n\to\infty)\]
The tool for the proof of the latter is the famous ``$\diverg$-$\curl$-lemma'', which is due to Murat and Tartar, see e.g.\ \cite{Murat1978,Tartar1979}, or \cite{TarIntro,Briane2009}. The strategy of doing so is well-established and can also be applied to other cases such as linearized elasticity. 

Starting out with the sequence of equations given by \eqref{eq:phys_seq}, we shall sketch another way of deducing the limit equation. Written in a block-operator-matrix-form the equations \eqref{eq:phys_seq} read as
\begin{equation}\label{eq:phys_seq_syst}
  \left(\partial_t\begin{pmatrix}
             1 & 0 \\ 0 & 0
            \end{pmatrix}+\begin{pmatrix} 0 & 0 \\ 0 & \kappa(n\cdot)^{-1} \end{pmatrix} +\begin{pmatrix} 0 & \diverg  \\ {\interior{\grad}} & 0 \end{pmatrix}\right)\begin{pmatrix} \theta_n \\ q_n \end{pmatrix} = \begin{pmatrix} f \\ 0 \end{pmatrix}.
\end{equation}
Following the introduction and recalling that $\partial_t$ can be realized as a continuously invertible operator, the latter equations are clearly of the form \eqref{eq:int_3} with $\s M_n= \begin{pmatrix} 1 & 0 \\ 0 & 0  \end{pmatrix}+\partial_t^{-1}\begin{pmatrix} 0 & 0 \\ 0 & \kappa(n\cdot)^{-1} \end{pmatrix}$ and $A = \begin{pmatrix} 0& \diverg  \\ {\interior{\grad}} & 0 \end{pmatrix}$. Note that, due to the imposed homogeneous Dirichlet boundary conditions, $A$ is a skew-selfadjoint operator in $L_2(\Omega)^{N+1}$ since $\diverg^*=-{\interior{\grad}}$. 

For computing the limit as $n\to\infty$, we want to apply \cite[Theorem 3.5]{Waurick2011} (or Theorem \ref{Thm:(P1)} in this article). For this we require that $A$ has compact resolvent.

If $\Omega\subseteqq \R$ is a bounded, open interval, Theorem \ref{Thm:(P1)} is already applicable. Indeed, in this case the operator $A=\begin{pmatrix} 0& \partial_1  \\ {\interior{\partial}_1} & 0 \end{pmatrix}$ has compact resolvent. Thus, we infer that $\kappa(\cdot)^{-1}$ is periodic, and, hence, $\kappa(n\cdot)^{-1}$ converges in the weak*-topology of $L^\infty$ to the constant function $\int_0^11/\kappa$. The limit equation reads
\[
   \partial_t \theta - \partial_1 \Big(\int_0^11/\kappa\Big)^{-1} \interior{\partial}_1 \theta = f,
\]
where $\theta = \textnormal{w-}\lim_{n\to\infty} \theta_n$. Here, the homogenized coefficient, that is, $\Big(\int_0^11/\kappa\Big)^{-1}$ coincides with the \emph{harmonic mean of $\kappa$}.

Next, we consider the case $N\geqq 2$, that is, the underlying medium is at least $2$-dimensional. Then, the nullspace of $\diverg$ is the infinite-dimensional. Hence, the operator $\begin{pmatrix} 0& \diverg  \\ {\interior{\grad}} & 0 \end{pmatrix}$ has no compact resolvent. Thus, a rationale similar to the one-dimensional case fails.

In the following, we propose yet another reformulation of \eqref{eq:phys_seq} making Theorem \ref{Thm:(P1)} applicable. For this, note that the domain of ${\interior{\grad}}$ (endowed with its graph norm) equals $W_{2,0}^1(\Omega)$. By the selection theorem of Rellich and Kondrachov, we have $W_{2,0}^1(\Omega)\hookrightarrow\hookrightarrow L_2(\Omega)$, since $\Omega$ was assumed to be bounded. Next, we try to overcome the problem of the infinite-dimensional nullspace of $\diverg$ (and hence of $A$).  The idea is to restrict $A$ to a domain being orthogonal to the nullspace of $A$. Due to Dirichlet boundary conditions the operator ${\interior{\grad}}$ is one-to-one, thus $N(A)$, the nullspace of $A$, equals $\{0\}\oplus N(\diverg)\subseteqq L_2(\Omega)\oplus L_2(\Omega)^N$. Since $\diverg^*=-{\interior{\grad}}$, we have $L_2(\Omega)^N=\overline{R({\interior{\grad}})}\oplus N(\diverg)$. Using Poincar\'e's inequality, we deduce that the range of ${\interior{\grad}}$ is closed in $L_2(\Omega)^N$. Hence,
\[L_2(\Omega)^N = R({\interior{\grad}}) \oplus N(\diverg).\]
    Along that decomposition of $L_2(\Omega)^N$, we decompose the heat flux $q_n$ from equation \eqref{eq:phys_seq_syst} as $q_n=q_n^{(1)}+q_n^{(2)}$ with $q_n^{(1)}\in R({\interior{\grad}})$ and $q_n^{(2)}\in N(\diverg)$. We introduce the operator $P\colon L_2(\Omega)^N \to R({\interior{\grad}})$, which maps any $g\in L_2(\Omega)^N$ to its orthogonal projection $Pg$ in the range of ${\interior{\grad}}$. Then the adjoint of $P$ is the canonical embedding from $R({\interior{\grad}})$ into $L_2(\Omega)^N$. Hence, $A$ as an operator acting on $L_2(\Omega)\oplus R({\interior{\grad}})\oplus N(\diverg)$ may be written as follows
\[
   A = \begin{pmatrix} \tilde{A}  & 0  \\
                                              0    & 0 
\end{pmatrix}\text{ with } \tilde{A} = \begin{pmatrix}  0 & \diverg P^*  \\
                                              P \interior{\grad}    & 0 
\end{pmatrix}
\]
Observe that $A$ leaves the space $L_2(\Omega)\oplus R({\interior{\grad}})$ invariant. Moreover, note that $\tilde{A}$ is one-to-one and skew-selfadjoint on $L_2(\Omega)\oplus R({\interior{\grad}})$. Furthermore, it is not hard to see that the domain of $\diverg P^*$ if endowed with the graph norm is compactly embedded into $R({\interior{\grad}})$, see e.g.\ \cite[Lemma 4.1]{Waurick2012}. Thus, the operator $\tilde{A}$ has compact resolvent. 

Denoting by $Q\colon  L_2(\Omega)^N \to N(\diverg)$ the operator, which maps $g\in L_2(\Omega)^N$ to its orthogonal projection $Qg\in N(\diverg)$, we deduce from \eqref{eq:phys_seq_syst} the following system 
\begin{equation}\label{eq:phys_ext}
\left(\partial_t\begin{pmatrix}
             1 & 0 & 0 \\ 0 & 0 & 0 \\ 0&0&0
            \end{pmatrix}+\begin{pmatrix} 0 & 0  &  0 \\ 0 & P\kappa(n\cdot)^{-1}P^* & P\kappa(n\cdot)^{-1}Q^* \\
                                          0 & Q\kappa(n\cdot)^{-1}P^* & Q\kappa(n\cdot)^{-1}Q^* \end{pmatrix} +\begin{pmatrix} 0 & \diverg P^*  & 0 \\ P {\interior{\grad}}  & 0 & 0 \\
                                              0    & 0 & 0
\end{pmatrix}\right)\begin{pmatrix} \theta_n \\ q_n^{(1)} \\ q_n^{(2)} \end{pmatrix} = \begin{pmatrix} f \\  0 \\ 0 \end{pmatrix}. 
\end{equation}
Next, we could apply Theorem \ref{Thm:(P1)} to the first two rows of equation \eqref{eq:phys_ext} by putting $P\kappa(n\cdot)^{-1}Q^*q_n^{(2)}$ to the right-hand side, that is, 
\begin{equation*}
\left(\partial_t\begin{pmatrix}
             1 & 0  \\ 0 & 0  \end{pmatrix}+\begin{pmatrix} 0 & 0   \\ 0 & P\kappa(n\cdot)^{-1}P^*  \end{pmatrix} +\tilde{A} \right)\begin{pmatrix} \theta_n \\ q_n^{(1)}  \end{pmatrix} = \begin{pmatrix} f \\  -P\kappa(n\cdot)^{-1}Q^*q_n^{(2)} \end{pmatrix}. 
\end{equation*} If we let $n\to\infty$ in the latter formulation, we are not able to identify the limit of $-P\kappa(n\cdot)^{-1}Q^*q_n^{(2)}$.   A closer look into Theorem \ref{Thm:(P1)} reveals that due to the compactness of the resolvent of $\tilde{A}$ the sequence $(\theta_n,q_n^{(1)})_n$ converges in a way that the so-called `weak-strong principle' can be applied, cf.\ Theorem \ref{Le: Mat_law_ptwise}. Hence, we are led to express $q_n^{(2)}$ in terms of $(\theta_n,q_n^{(1)})$. Therefore, we perform similarity transformations of \eqref{eq:phys_ext}. We arrive at {\footnotesize
\begin{multline*}
   \left(\partial_t\begin{pmatrix}
             1 & 0 & 0 \\ 0 & 0 & 0\\ 0&0&0
            \end{pmatrix} +\begin{pmatrix} 0 & 0  &  0 \\ 0 & P\kappa(n\cdot)^{-1}P^*- P\kappa(n\cdot)^{-1}Q^*\left(Q\kappa(n\cdot)^{-1}Q^*\right)^{-1} Q\kappa(n\cdot)^{-1}P^* & 0 \\
                                          0 & Q\kappa(n\cdot)^{-1}P^* & Q\kappa(n\cdot)^{-1}Q^* \end{pmatrix}\right. \\
 \left.+\begin{pmatrix} 0 & \diverg P^*  & 0 \\ P {\interior{\grad}}  & 0 & 0 \\
                                              0    & 0 & 0
\end{pmatrix}\right)\begin{pmatrix} \theta_n \\  q^{(1)}_n \\  q^{(2)}_n \end{pmatrix} = \begin{pmatrix} f \\  0 \\ 0 \end{pmatrix}.
\end{multline*}}Multiplication by $\begin{pmatrix}
             1 & 0 & 0 \\ 0 & 1 & 0\\ 0&0&\left(Q\kappa(n\cdot)^{-1}Q^*\right)^{-1}
            \end{pmatrix}$ gives
\begin{multline*}
   \left(\partial_t\begin{pmatrix}
             1 & 0 & 0 \\ 0 & 0 & 0\\ 0&0&0
            \end{pmatrix}  +\begin{pmatrix} 0 & 0  &  0 \\ 0 & P\kappa(n\cdot)^{-1}P^*- P\kappa(n\cdot)^{-1}Q^*\left(Q\kappa(n\cdot)^{-1}Q^*\right)^{-1} Q\kappa(n\cdot)^{-1}P^* & 0 \\
                                          0 & \left(Q\kappa(n\cdot)^{-1}Q^*\right)^{-1} Q\kappa(n\cdot)^{-1}P^* & 1 \end{pmatrix}\right. \\
 \left.+\begin{pmatrix} 0 & \diverg P^*  & 0 \\ P {\interior{\grad}}  & 0 & 0 \\
                                              0    & 0 & 0
\end{pmatrix}\right)\begin{pmatrix} \theta_n \\  q^{(1)}_n \\  q^{(2)}_n \end{pmatrix} = \begin{pmatrix} f \\  0 \\ 0 \end{pmatrix}.
\end{multline*}
Note that the operators \[                                                                                                                                                                         \begin{pmatrix} 0 & 0  &  0 \\ 0 & P\kappa(n\cdot)^{-1}P^*- P\kappa(n\cdot)^{-1}Q^*\left(Q\kappa(n\cdot)^{-1}Q^*\right)^{-1} Q\kappa(n\cdot)^{-1}P^* & 0 \\
                                          0 & \left(Q\kappa(n\cdot)^{-1}Q^*\right)^{-1} Q\kappa(n\cdot)^{-1}P^* & 1 \end{pmatrix}\quad (n\in\N)                  \]
 form a bounded sequence in the space of linear operators in the separable Hilbert space $L_2(\Omega)^{N+1}$. Thus, there exists a subsequence, which converges in the weak operator topology of $L\left(L_2(\Omega)^{N+1}\right)$. Applying Theorem \ref{Thm:(P1)} to the first two rows of the latter equation, that is, 
   \begin{multline*}
\left(\partial_t\begin{pmatrix}
             1 & 0  \\ 0 & 0 
            \end{pmatrix}  +\begin{pmatrix} 0 & 0   \\ 0 & P\kappa(n\cdot)^{-1}P^*- P\kappa(n\cdot)^{-1}Q^*\left(Q\kappa(n\cdot)^{-1}Q^*\right)^{-1} Q\kappa(n\cdot)^{-1}P^* \end{pmatrix}\right. \\
 \left.+\begin{pmatrix} 0 & \diverg P^*  \\ P {\interior{\grad}}  & 0 \end{pmatrix}\right)\begin{pmatrix} \theta_n \\  q^{(1)}_n \end{pmatrix} = \begin{pmatrix} f \\  0 \end{pmatrix},
   \end{multline*}
we deduce that $(\theta_n,   q^{(1)}_n)_n$ weakly converges and that the sequence $(\partial_t^{-3}\theta_n(t),   \partial_t^{-3}q^{(1)}_n(t))_n$ \emph{strongly} converges  in $L_2(\Omega)\oplus R({\interior{\grad}})$ for all $t\in\R$, cf.\ Theorem \ref{Thm:(P1)}. Hence, \[\left(\left(Q\kappa(n\cdot)^{-1}Q^*\right)^{-1} Q\kappa(n\cdot)^{-1}P^*q_n^{(1)}\right)_n\] converges to the product of the limits of $\left(\left(Q\kappa(n\cdot)^{-1}Q^*\right)^{-1}Q\kappa(n\cdot)^{-1}P^*\right)_n$ and $(q_n^{(1)})_n$, see Corollary \ref{Cor:W-Strong-Product}. Thus, we may let $n\to\infty$ in
\[
   \left(Q\kappa(n\cdot)^{-1}Q^*\right)^{-1} Q\kappa(n\cdot)^{-1}P^*q_n^{(1)} + q_n^{(2)} = 0.
\]
We get that $(\theta_n,q^{(1)}_n,q^{(2)}_n)_n$ weakly converges to a solution of the following equation {\small 
\begin{multline*}
   \left(\partial_t\begin{pmatrix}
             1 & 0 & 0 \\ 0 & 0 & 0\\ 0&0&0
            \end{pmatrix} +\begin{pmatrix} 0 & 0  &  0 \\ 0 & \lim_{n\to\infty}\left(P\kappa(n\cdot)^{-1}P^*- P\kappa(n\cdot)^{-1}Q^*\left(Q\kappa(n\cdot)^{-1}Q^*\right)^{-1} Q\kappa(n\cdot)^{-1}P^*\right) & 0 \\
                                          0 & \lim_{n\to\infty}\left(\left(Q\kappa(n\cdot)^{-1}Q^*\right)^{-1} Q\kappa(n\cdot)^{-1}P^*\right) & 1 \end{pmatrix}\right. \\
 \left.+\begin{pmatrix} 0 & \diverg P^*  & 0 \\ P {\interior{\grad}}  & 0 & 0 \\
                                              0    & 0 & 0
\end{pmatrix}\right)\begin{pmatrix} \theta \\  q^{(1)} \\  q^{(2)} \end{pmatrix} = \begin{pmatrix} f \\  0 \\ 0 \end{pmatrix}.
\end{multline*}}

Next, we want to apply Theorem \ref{Th: SolTh} in order to obtain well-posedness of the limit equation. For this, we again apply similarity transformations. The details are given in Section \ref{section: aux}. We arrive at
\begin{multline}\label{eq:phys_limit}
   \left(\partial_t\left(\begin{smallmatrix}
             1 & 0 & 0 \\ 0 & 0 & 0\\ 0&0&0
            \end{smallmatrix}\right) \right.+ \\ \left. \left(\begin{smallmatrix} 0 & 0  &  0 \\ 0 & \lim_{n\to\infty}\left(P\kappa(n\cdot)^{-1}P^*- P\kappa(n\cdot)^{-1}Q^*\left(Q\kappa(n\cdot)^{-1}Q^*\right)^{-1} Q\kappa(n\cdot)^{-1}P^*\right) & \lim_{n\to\infty}\left( P\kappa(n\cdot)^{-1}Q^*\left(Q\kappa(n\cdot)^{-1}Q^*\right)^{-1}\right) \\
                                          0 & \lim_{n\to\infty}\left(\left(Q\kappa(n\cdot)^{-1}Q^*\right)^{-1} Q\kappa(n\cdot)^{-1}P^*\right) & \lim_{n\to\infty} \left(Q\kappa(n\cdot)^{-1}Q^*\right)^{-1} \end{smallmatrix}\right)\right. \\
 \left.+\left(\begin{smallmatrix} 0 & \diverg P^*  & 0 \\ P {\interior{\grad}}  & 0 & 0 \\
                                              0    & 0 & 0
\end{smallmatrix}\right)\right)\left(\begin{smallmatrix} \theta \\  q^{(1)} \\  q^{(2)} \end{smallmatrix}\right) = \left(\begin{smallmatrix} f \\  0 \\ 0 \end{smallmatrix}\right).
\end{multline}
It can be shown that Theorem \ref{Th: SolTh} applies to the latter equation yielding the limit equation to be well-posed. Note that, if one is only interested in the behavior of the heat distribution $\theta$, we can reformulate the latter equation into a second order form. The resulting equation would be
{\small
\[
  \partial_t \theta - \diverg P^*\left(\lim_{n\to\infty}P\kappa(n\cdot)^{-1}P^*- P\kappa(n\cdot)^{-1}Q^*\left(Q\kappa(n\cdot)^{-1}Q^*\right)^{-1} Q\kappa(n\cdot)^{-1}P^*\right)^{-1}P {\interior{\grad}} \theta = f.
\]}Using the periodicity of $\kappa$, we deduce that $P\kappa(n\cdot)^{-1}P^*$ converges in the weak operator topology to $P \int_{(0,1)^N} \kappa(x)^{-1}\dd x P^*$, cf. \cite[Proposition 4.3]{Waurick2012}. 
Hence, the limit equation reads

{\footnotesize
\[
  \partial_t \theta - \diverg P^*\left(P\int_{(0,1)^N}\kappa^{-1}P^*- \lim_{n\to\infty}P\kappa(n\cdot)^{-1}Q^*\left(Q\kappa(n\cdot)^{-1}Q^*\right)^{-1} Q\kappa(n\cdot)^{-1}P^*\right)^{-1}P {\interior{\grad}} \theta = f.
\]}

The interested reader might think, why such a seemingly complicated strategy yielding the homogenized equations should be applied. In the case of the heat equation this strategy indeed does not give anything new despite the fact that the homogenized equations have a different representation. Further, with this strategy one cannot easily deduce the convergence of the whole sequence. However, note that the approach presented here only uses abstract theory from functional analysis and does not rely on the specific form of $\kappa$ being a periodic multiplication operator. If $\kappa$ is a linear operator invoking non-local terms, well-known homogenization theory might fail to work. Moreover, the way of computing the homogenized coefficients carries over to a large class of evolutionary equations: It is possible to treat Maxwell's equations, the wave equation, the heat equation or general coupled systems in mathematical physics in a \emph{unified} manner. It is also possible that a second order formulation might not be available or is not easy to handle, cf.\ e.g.\ \cite[Equation (6.3.9), p. 455]{Picard} so that the usual strategy might not work. 

\section{Setting}\label{section: set}

We recall the setting of evolutionary equations established in \cite{PicPhy} or \cite[Chapter 6]{Picard}. For the construction of the time-derivative $\partial_t$, we particularly refer to \cite{KPSTW2014}. Let $H$ be a Hilbert space and denote by $L_2(\R;H)$ the space of $H$-valued $L_2$-functions. The operator
\[
  \partial : W_2^1(\R;H)\subseteqq L_2(\R;H)\to L_2(\R;H): f\mapsto f'
\]assigning to each weakly differentiable $H$-valued function its weak derivative is skew-selfadjoint. Define the unitary Fourier transform $\s F: L_2(\R;H)\to L_2(\R;H)$ as the closure of the mapping
\[
   f\mapsto\left(\xi\mapsto  \frac{1}{\sqrt{2\pi}}\int_{\R} e^{-ix\xi}f(x)\dd x\right)
\] defined for $f\in C_c^\infty(\R;H)$. Let 
\begin{multline*}
  m: \{f\in L_2(\R;H); (x\mapsto xf(x))\in L_2(\R;H)\}\subseteqq \\ L_2(\R;H)\to L_2(\R;H): f\mapsto (x\mapsto xf(x)).
\end{multline*}
For $\nu>0$ define $H_{\nu,0}(\R;H)\coloneqq L_2(\R,\exp(-2\nu x)\dd x;H)$ the space of $H$-valued (equivalence classes of) square-integrable functions with respect to the weighted Lebesgue measure with Radon-Nikodym derivative $\exp(-2\nu (\cdot))$. We also write $H_{\nu,0}(\R)$ if $H=\C$. The mapping $\exp(-\nu m): H_{\nu,0}(\R;H)\to L_2(\R;H):f\mapsto (x\mapsto \exp(-\nu x)f(x))$ is unitary and the operator
\[
   \partial_{t,\nu}\coloneqq \exp(-\nu m)^*(\partial+ \nu) \exp(-\nu m) 
\]
is normal in $H_{\nu,0}(\R;H)$. If there is no risk of confusion, we simply write $\partial_t$ instead of $\partial_{t,\nu}$. We have $\partial_{t,\nu}^{-1} \in L(H_{\nu,0}(\R;H))$ with $\Abs{\partial_{t,\nu}^{-1}}=1/\nu$. Introducing the \emph{Fourier-Laplace transform} $\s L_\nu\coloneqq \s F \exp(-\nu m)$, we get
\[
   \partial_{t,\nu} = \s L_\nu^* (im+\nu)\s L_\nu.
\]
Consequently,
\[
    \partial_{t,\nu}^{-1} = \s L_\nu^* \left({im+\nu}\right)^{-1}\s L_\nu.
\]
The latter equation gives a functional calculus for the normal operator $\partial_{t,\nu}^{-1}$: 

\begin{Def}[Hardy space and functional calculus for $\partial_{t,\nu}$] For an open set $E\subseteqq \C$ and a Banach space $X$, we define the Hardy space 
\[
    \s H^\infty(E; X)\coloneqq \{ M \colon E\to X; M\text{ bounded, analytic}\}
\]and $\Abs{M}_\infty\coloneqq\sup\{\abs{M(z)}_X; z\in E\}$. Let $H_1,H_2$ be Hilbert spaces, $\nu>0$, $r>1/(2\nu)$.
\begin{itemize}
\item For $M\in \s H^\infty(B_{\C}(r,r);L(H_1,H_2))$ define
\[
   M(\partial_{t,\nu}^{-1})\coloneqq\s L_\nu^*\left(M \left(\frac1{im+\nu}\right)\right)\s L_\nu,
\]
where $\left(M\left(\frac1{im+\nu}\right)\phi\right)(t)=\left(M\left(\frac1{it+\nu}\right)\right)\left(\phi(t)\right)$ for all $t\in\R$ and $\phi \in C_c^\infty(\R;H_1)$.
\item For $c>0$ define 
\begin{multline*}
  \s H^{\infty,c}(B_\C(r,r);L(H_1))\coloneqq\\
  \{ M\in \s H^\infty(B_\C(r,r);L(H_1)); \Re z^{-1}M(z)\geqq c \; (z\in B_\C(r,r))\}.
\end{multline*}
\end{itemize}
For easy reference, we call elements of $\s H^\infty(B_\C(r,r);L(H_1))$ \emph{material laws} or \emph{constitutive relations} and elements of $\s H^{\infty,c}(B_\C(r,r);L(H_1))$ \emph{$(c)$-material laws}.
\end{Def}

We will deal with operators in $H_{\nu,0}(\R;H)$ in the following. In consequence, we identify any closed, densely defined operator $A:D(A)\subseteqq H\to H$ in some Hilbert space $H$ with its canonical extension on the space of $H$-valued $H_{\nu,0}(\R)$-functions, cf. \cite{PicPhy}. We have the following well-posedness theorem taken from \cite{PicPhy}.
\begin{Sa}[{\cite[Solution theory]{PicPhy}}]\label{Th: SolTh} Let $H$ be a Hilbert space, $c,\nu>0, r>1/(2\nu)$ and $M\in \s H^{\infty,c}(B(r,r);L(H))$. Let $A: D(A)\subseteqq H\to H$ be skew-selfadjoint. Then the equation
\[
   (\partial_{t}M(\partial_{t}^{-1})+A)u=f
\]
admits a unique solution $u\in H_{\nu,0}(\R;H)$ for all $f\in D$ for some $D\subseteqq H_{\nu,0}(\R;H)$ dense. Moreover, the solution operator $(\partial_t M(\partial_t^{-1})+A)^{-1}$ is a densely defined, continuous operator in $H_{\nu,0}(\R;H)$ with operator norm bounded by $\frac 1c$, and the operator $\overline{(\partial_t M(\partial_t^{-1})+A)^{-1}}$ is causal, that is, for all $f\in H_{\nu,0}(\R;H)$ and $a\in \R$ we have
\[
  \chi_{(-\infty,a)}\overline{(\partial_t M(\partial_t^{-1})+A)^{-1}}(\chi_{(-\infty,a)}f) = \chi_{(-\infty,a)}\overline{(\partial_t M(\partial_t^{-1})+A)^{-1}}(f),
\]
where $\chi_{(-\infty,a)}$ denotes the multiplication operator mapping $f\in H_{\nu,0}(\R;H)$ to the truncated function $t\mapsto \chi_{(-\infty,a)}(t)f(t)$.
\end{Sa}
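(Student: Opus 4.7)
The plan is to pass to the Fourier--Laplace picture and reduce everything to a pointwise multiplier problem in the spectral variable. Under the unitary $\s L_\nu$, $\partial_0$ becomes multiplication by $m\mapsto im+\nu$ on $L_2(\R;H)$, $\partial_0^{-1}$ multiplication by $m\mapsto 1/(im+\nu)$, and $M(\partial_0^{-1})$ multiplication by $m\mapsto M(1/(im+\nu))$. The hypothesis $r>1/(2\nu)$ ensures $\{1/(im+\nu):m\in\R\}\subseteqq B_\C(r,r)$, so these multipliers are well-defined and uniformly bounded. The canonical extension of the skew-selfadjoint $A$ acts pointwise in $m$ and remains skew-selfadjoint.

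I would first establish the a priori estimate by taking $u\in D(\partial_0)\cap D(A)$ and computing
\[
   \Re\scp{(\partial_0 M(\partial_0^{-1})+A)u}{u} \;=\; \Re\scp{\partial_0 M(\partial_0^{-1})u}{u},
\]
since $\Re\scp{Au}{u}=0$ by skew-selfadjointness of $A$. Unitarity of $\s L_\nu$ rewrites the right-hand side as $\int_\R\Re\scp{(im+\nu)M(1/(im+\nu))\hat u(m)}{\hat u(m)}_H\dd m$, and the defining condition $\Re z^{-1}M(z)\geqq c$ for $z\in B_\C(r,r)$, applied with $z=1/(im+\nu)$, bounds the integrand below by $c\Abs{\hat u(m)}^2$. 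Cauchy--Schwarz then yields $\Abs{u}\leqq (1/c)\Abs{(\partial_0 M(\partial_0^{-1})+A)u}$, giving injectivity and a continuous left inverse of norm $\leqq 1/c$ on the range. To obtain existence on a dense set and identify the inverse, I would observe that, pointwise in $m$ with $z=1/(im+\nu)\in B_\C(r,r)$, the operator $z^{-1}M(z)+A$ on $D(A)$ has real part bounded below by $c$, hence is $m$-accretive and boundedly invertible on $H$ with inverse of norm $\leqq 1/c$. Since $z\mapsto z^{-1}M(z)$ is analytic into $L(H)$ and the resolvent is holomorphic, the map $N(z):=(z^{-1}M(z)+A)^{-1}$ lies in $\s H^\infty(B_\C(r,r);L(H))$ with $\Abs{N}_\infty\leqq 1/c$. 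Then $N(\partial_0^{-1})$ is a bounded operator on $H_{\nu,0}(\R;H)$ of norm $\leqq 1/c$ and, when applied to any $f$ whose Fourier--Laplace transform has compact support in $m$ (a dense subset), produces a genuine solution $u\in D(\partial_0)\cap D(A)$ to the equation, as a direct computation on the multiplier side shows. This gives existence on the required dense set, and the a priori estimate extends the solution operator by continuity with the stated norm bound.

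Causality is then immediate from this identification: the closed solution operator is $N(\partial_0^{-1})$ with $N\in\s H^\infty(B_\C(r,r);L(H))$, and a Paley--Wiener-type result in this framework (\cite{PicPhy}) asserts that every operator of this form is automatically causal on $H_{\nu,0}(\R;H)$, since analyticity of the multiplier on $B_\C(r,r)$ corresponds, via $w=1/z$, to analyticity in the right half-plane $\{\Re w>\nu\}$, i.e.\ to support on $[0,\infty)$ of the time-side kernel. The principal technical obstacle in the whole plan is precisely this identification of the solution operator as a bounded analytic function of $\partial_0^{-1}$ on $B_\C(r,r)$; once the analyticity and the uniform $1/c$-bound are established there, both continuity and causality follow at once from the functional calculus set up in the previous definition.
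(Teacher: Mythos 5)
The paper itself does not prove this theorem; it is imported verbatim from [PicPhy] (Picard's ``structural observation'' paper), so there is no internal proof to compare against. Your reconstruction follows the standard route of that reference: unitary transfer via $\s L_\nu$ to a pointwise multiplier problem, the coercivity estimate $\Re\scp{(\partial_0 M(\partial_0^{-1})+A)u}{u}\geqq c\abs{u}^2$ via $\Re z^{-1}M(z)\geqq c$ at $z=1/(\i m+\nu)$ and skew-selfadjointness of $A$, pointwise invertibility of $z^{-1}M(z)+A$, and causality from analyticity of the resulting $\s H^\infty$ symbol. This is essentially the correct and intended argument.

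One step is stated a bit too quickly: from $\Re(z^{-1}M(z)+A)\geqq c$ you conclude ``$m$-accretive and boundedly invertible.'' The accretivity estimate alone yields injectivity and closed range; surjectivity requires the additional observation that the adjoint $(z^{-1}M(z)+A)^* = \bar z^{-1}M(z)^* - A$ also satisfies $\Re(\cdot)\geqq c$ (since $\Re B^* = \Re B$ and $-A$ is again skew-selfadjoint), so that $N(z^{-1}M(z)+A)^{\perp}=N((z^{-1}M(z)+A)^*)=\{0\}$ and the range is dense, hence all of $H$. With that filled in, the remainder — holomorphy of $N(z)=(z^{-1}M(z)+A)^{-1}$ with $\Abs{N}_\infty\leqq 1/c$, identification of the solution operator with $N(\partial_0^{-1})$, density of the solvable right-hand sides via compactly supported transforms, and causality from the Paley--Wiener characterization of $\s H^\infty(B(r,r))$-functional calculus — is sound.
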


\begin{rem}
  The latter theorem may be generalized to non-autonomous equations, see \cite{W2015,PTWW2013}.
\end{rem}

We note that the results in Theorem \ref{Th: SolTh} results carry over to ``tailor made'' distribution spaces -- so-called Sobolev lattices -- discussed in \cite{PicSoLa}. In \cite[Remarks 1.2: (i)--(iii)]{Waurick2012} and \cite[Sections 2 and 3]{PicPhy} the core issues are sketched. We will use the notation from \cite{Waurick2012} and for the sake of clarity, we recall the main definitions. For $k\in \Z$, a Hilbert space $H$ and a densely defined, closed linear operator $C:D(C)\subseteqq H\to H$ with $0 \in \rho(C)$, we denote by $H_k(C)$ the Hilbert space defined as the completion of $D(C^{\abs{k}})$ with respect to the norm $\abs{\cdot}_{H_k(C)}:u\mapsto \abs{C^ku}_H$. It can be shown that the closure of $H_{\abs{k}}(C)\subseteqq H_k(C)\to H_{k-1}(C): u\mapsto Cu$ is unitary. We will re-use the letter $C$ for this extension. We are interested in the special cases $C=A+1$ with $A$ skew-selfadjoint or $C=\partial_t$. For $\ell\in\{-1,0,1\}$ we let $H_{\ell,A}\coloneqq H_\ell(A+1)$. For $\partial_t$ defined on $H_{\nu,0}(\R)$-functions with values in a Hilbert space $H$ we write $H_{\nu,k}(\R;H)\coloneqq  H_{k}(\partial_t)$. Consequently, we also use the spaces $H_{\nu,k}(\R;H_{\ell,A})$, $\ell\in\{-1,0,1\}$. The extension of the solution operator to $H_{\nu,-1}(\R;H)$ also serves as a way to model initial value problems, see e.g.~\cite[Section 6.2.5]{Picard}.

\section{Preliminary results}\label{section: top}

We summarize some findings from \cite{Waurick,Waurick2012}.
\begin{Def}
 For an open set $E\subseteqq \C$ and Hilbert spaces $H_1,H_2$, we define on the set $\s H^\infty(E;L(H_1,H_2))$ the initial topology $\tau_{\s M}$ induced by the mappings
\[
   \s H^\infty(E;L(H_1,H_2)) \ni M \mapsto \langle \phi, M(\cdot)\psi\rangle \in \s H(E),
\]
where $\s H(E)$ is the set of holomorphic $\C$-valued functions endowed with the compact open topology. We define $\s H_{\textnormal{w}}^\infty(E;L(H_1,H_2))\coloneqq \left(\s H^\infty(E;L(H_1,H_2)),\tau_{\s M}\right)$ and re-use the name $\s H_{\textnormal{w}}^\infty(E;L(H_1,H_2))$ for the underlying set. 
\end{Def}
\begin{Sa}[sequential compactness, {\cite[Theorem 3.4]{Waurick}}]\label{Thm: seq_comp} Let $H_1,H_2$ be separable Hilbert spaces, $E \subseteqq \C$ open. Let $\s B\subseteqq \s H_{\textnormal{w}}^{\infty}(E;L(H_1,H_2))$ be \emph{bounded}, that is, \[\sup\{\Abs{M(z)}_{L(H_1,H_2)};z\in E, M\in \s B\}<\infty.\] Then $\s B$ is relatively sequentially compact.
\end{Sa}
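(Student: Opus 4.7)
The plan is to combine Montel's theorem on normal families with a Cantor diagonal extraction and a density argument. Fix a sequence $(M^{(k)})_k$ in $\mathcal{B}$ and set $C := \sup_{M\in\mathcal{B}}\sup_{z\in E}\|M(z)\|<\infty$. By separability choose countable dense sequences $(\phi_m)_m\subseteq H_2$ and $(\psi_n)_n\subseteq H_1$. For each fixed pair $(m,n)$ the family
\[
   \{\langle\phi_m,M(\cdot)\psi_n\rangle:M\in\mathcal{B}\}
\]
is a collection of holomorphic $\mathbb{C}$-valued functions on $E$ uniformly bounded by $C\|\phi_m\|\|\psi_n\|$, hence by Montel's theorem is relatively compact in $\mathcal{H}(E)$ equipped with the compact-open topology.

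A standard Cantor diagonal procedure then yields a subsequence, again denoted $(M^{(k)})_k$, such that for every $m,n\in\mathbb{N}$ the sequence $\langle\phi_m,M^{(k)}(\cdot)\psi_n\rangle$ converges in $\mathcal{H}(E)$ as $k\to\infty$. To extend this to arbitrary $\phi\in H_2$, $\psi\in H_1$ I would run a three-epsilon argument: for $K\subseteq E$ compact and $\varepsilon>0$, pick $m,n$ with $\|\phi-\phi_m\|,\|\psi-\psi_n\|$ sufficiently small and use the uniform bound $\|M^{(k)}(z)\|\le C$ to estimate, uniformly in $k$ and $z\in E$,
\[
  |\langle\phi,M^{(k)}(z)\psi\rangle-\langle\phi_m,M^{(k)}(z)\psi_n\rangle|\le C\bigl(\|\phi\|\|\psi-\psi_n\|+\|\phi-\phi_m\|\|\psi_n\|\bigr).
\]
Combined with the Cauchy property of the inner-product sequence at the dense pair on $K$, this shows that $(\langle\phi,M^{(k)}(\cdot)\psi\rangle)_k$ is Cauchy in $C(K)$; since $K$ is arbitrary, the sequence converges in $\mathcal{H}(E)$ to some $f_{\phi,\psi}$.

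It remains to identify the scalar limits with an element of $\mathcal{H}^\infty(E;L(H_1,H_2))$. For each fixed $z\in E$ the assignment $(\phi,\psi)\mapsto f_{\phi,\psi}(z)$ is sesquilinear as a pointwise limit of sesquilinear forms and satisfies $|f_{\phi,\psi}(z)|\le C\|\phi\|\|\psi\|$, so Riesz' representation theorem supplies $N(z)\in L(H_1,H_2)$ with $\|N(z)\|\le C$ and $\langle\phi,N(z)\psi\rangle=f_{\phi,\psi}(z)$. Since each $z\mapsto\langle\phi,N(z)\psi\rangle=f_{\phi,\psi}(z)$ lies in $\mathcal{H}(E)$, the operator-valued map $N$ is weakly analytic, which together with uniform boundedness yields analyticity in the operator-norm topology (a classical result of Dunford). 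Hence $N\in\mathcal{H}^\infty(E;L(H_1,H_2))$ and by construction $M^{(k)}\to N$ in $\tau_{\mathcal{M}}$.

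The main obstacle is the transition from convergence on the countable dense collection to convergence in $\tau_{\mathcal{M}}$, together with the bookkeeping that ensures the limiting scalar functions assemble into a bounded operator-valued holomorphic function. Uniform boundedness of $\mathcal{B}$ is precisely what makes both the three-epsilon extension and the Riesz representation of the limiting sesquilinear form go through; everything else is routine application of Montel and diagonalisation.
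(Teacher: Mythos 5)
Your proof is correct. The paper cites this result from \cite[Theorem 3.4]{Waurick} without reproducing the argument, but the route you take --- Montel's theorem applied to the scalar families $\langle\phi_m,M(\cdot)\psi_n\rangle$ for a countable dense web of vectors, Cantor diagonalisation (valid since $\mathcal{H}(E)$ with the compact-open topology is metrisable on a $\sigma$-compact domain $E$), a three-epsilon extension to arbitrary $\phi,\psi$ using the uniform bound, assembly of the limiting sesquilinear forms into bounded operators $N(z)$ via Riesz representation, and finally the passage from weak-operator analyticity plus local boundedness to norm analyticity --- is exactly the standard argument for this kind of sequential compactness statement, and each step you outline holds up.

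One point worth making explicit when writing this out in full: in the three-epsilon estimate the factor $\|\psi_n\|$ depends on $n$, so one should note $\|\psi_n\|\le\|\psi\|+\|\psi-\psi_n\|$ to keep it uniformly controlled once $\psi_n$ is taken close to $\psi$. Also, the Dunford step is usually stated for analyticity tested against the full dual of $L(H_1,H_2)$; the version you need (WOT-analyticity plus local boundedness implies norm analyticity) follows by the Cauchy-integral argument you would use anyway: expand $\langle\phi,N(\cdot)\psi\rangle$ in a Taylor series on a disc, observe that each coefficient is a bounded sesquilinear form dominated by $C/r^n$, realise it as an operator $A_n$ via Riesz, and note that $\sum (z-z_0)^nA_n$ converges in operator norm and agrees with $N$ weakly, hence equals $N$. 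These are small bookkeeping details and do not affect the correctness of the approach.
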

\begin{Le}[{\cite[Lemma 3.5]{Waurick}}] \label{Le: loc_uni_weak} Let $H$ be a Hilbert space, $r>0$.
Let $(M_n)_n$ be a bounded and convergent sequence in the space $\s H_{\textnormal{w}}^{\infty}(B(r,r);L(H_1,H_2))$ with limit $M
\in \s H_{\textnormal{w}}^{\infty}(B(r,r);L(H_1,H_2))$. Then
$(M_n(\partial_t^{-1}))_n$ converges to $M(\partial_t^{-1})$ in the
weak operator topo\-logy of $L(H_{\nu,k}(\R;H_1),H_{\nu,k}(\R;H_2))$, where $\nu>
1/(2r)$, $k\in \Z$.
\end{Le}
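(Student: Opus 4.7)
The plan is to transport the statement via the Fourier--Laplace transform to a question about multiplication operators on unweighted $L_2$-spaces, and then to combine pointwise convergence of the symbols on the relevant curve with dominated convergence.

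First I would reduce to the case $k=0$ and to multiplication operators. By the definition of the functional calculus for $\partial_0^{-1}$,
\[
  M_n(\partial_0^{-1}) = \s L_\nu^*\, M_n\!\left(\tfrac{1}{im+\nu}\right) \s L_\nu,
\]
and analogously for $M$. Since $\s L_\nu$ is a unitary isomorphism from $H_{\nu,0}(\R;H_i)$ onto $L_2(\R;H_i)$ for $i\in\{1,2\}$, weak operator convergence on $H_{\nu,0}$ is equivalent to weak operator convergence of the multiplication operators $M_n\!\left(\tfrac{1}{im+\nu}\right)$ on the unweighted $L_2$-spaces. For arbitrary $k\in\Z$ the operator $\partial_0^k\colon H_{\nu,k}(\R;H)\to H_{\nu,0}(\R;H)$ is unitary and commutes with every function of $\partial_0^{-1}$ by the functional calculus; so testing against $\phi\in H_{\nu,k}(\R;H_1)$ and $\psi\in H_{\nu,k}(\R;H_2)$ reduces to testing against $\partial_0^k\phi$ and $\partial_0^k\psi$ in $H_{\nu,0}$, and the problem is reduced to $k=0$.

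The core argument is pointwise convergence of the symbols along the line $\nu+i\R$ followed by dominated convergence. The assumption $\nu>1/(2r)$ is exactly what ensures that the curve $t\mapsto \tfrac{1}{it+\nu}$, whose image is the circle of radius $1/(2\nu)$ centred at $1/(2\nu)$, lies inside the disk $B(r,r)$. By the definition of $\tau_{\s M}$, the hypothesis $M_n\to M$ means that for every $\phi_1\in H_1$ and $\psi_2\in H_2$ the scalar analytic function $z\mapsto\langle M_n(z)\phi_1,\psi_2\rangle$ converges to $z\mapsto\langle M(z)\phi_1,\psi_2\rangle$ uniformly on compact subsets of $B(r,r)$, hence in particular pointwise along the curve. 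Combined with the uniform bound $\sup_{n,z}\|M_n(z)\|<\infty$ coming from boundedness of the sequence, this gives the following step: for simple vector-valued functions $\phi=\sum_j \chi_{I_j}\phi_j$ and $\psi=\sum_i \chi_{J_i}\psi_i$ with bounded intervals $I_j,J_i\subseteq\R$ and $\phi_j\in H_1$, $\psi_i\in H_2$, the pairing
\[
   \int_\R\bigl\langle M_n\!\left(\tfrac{1}{it+\nu}\right)\phi(t),\psi(t)\bigr\rangle\dd t
\]
decomposes into finitely many integrals over bounded sets of pointwise-convergent, uniformly bounded integrands, and dominated convergence delivers convergence to the analogous quantity for $M$.

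To finish, I would extend by density: simple functions of the above form are dense in $L_2(\R;H_i)$, and the uniform operator-norm bound $\sup_n\|M_n\!\left(\tfrac{1}{im+\nu}\right)\|\le\sup_n\|M_n\|_\infty<\infty$, together with a standard three-$\varepsilon$ argument, upgrades weak convergence on a dense subset to weak convergence on all of $L_2(\R;H_i)$. The only non-mechanical step is the passage from compact-open convergence of scalar symbols to weak convergence of the associated operator-valued multiplication operators; this is precisely where the constraint $\nu>1/(2r)$ is used, and I do not expect any obstacle beyond careful bookkeeping.
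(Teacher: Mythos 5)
Your proposal is correct and follows essentially the same route the paper takes: the paper reduces to $k=0$ via unitarity of $\partial_0^k$ and delegates the core case to \cite[Lemma 3.5]{Waurick}, whose proof you have reconstructed (Fourier--Laplace conjugation to multiplication operators, pointwise convergence of the symbols on the circle $t\mapsto 1/(it+\nu)\subseteq B(r,r)$, dominated convergence on simple functions, then density plus the uniform bound $\sup_n\Abs{M_n}_\infty$). Your observation that $\nu>1/(2r)$ is precisely what places the symbol curve inside $B(r,r)$ is the correct justification, and the extension to $H_1\ne H_2$ is indeed a notational modification.
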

\begin{proof}
 In {\cite[Lemma 3.5]{Waurick}}, the claim was shown for the case $k=0$ and $H_1=H_2$. The general case follows by observing that $\partial_t^{k}: H_{\nu,k}(\R;H_1)\to H_{\nu,0}(\R;H_1)$ is unitary and obvious modifications.
\end{proof}
\begin{Le}[{\cite[Lemma 1.5]{Waurick2011}}]\label{Le: Conv_of_coeff} Let $H_1,H_2$ be Hilbert spaces. Let $E\subseteqq \C$ be an open disc with center $z$ and let $(M_n)_n=(\sum_{k=0}^\infty (\cdot-z)^kA_{nk})_n$ be a convergent sequence in $\s H_{\textnormal{w}}^{\infty}(E;L(H_1,H_2))$ with limit $\sum_{k=0}^\infty (\cdot - z)^kA_k$. Then $A_{nk}\to A_k$ as $n\to\infty$ in the weak operator topology of $L(H_1,H_2)$ for all $k\in\N_0$.
\end{Le}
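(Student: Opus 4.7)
The plan is to extract the coefficients $A_{nk}$ and $A_k$ as $k$-th Taylor coefficients of scalar-valued holomorphic functions at $z$, and then pass to the limit via Cauchy's integral formula. First I unfold the definition of $\tau_{\s M}$-convergence: by hypothesis, for every $\phi \in H_2$ and $\psi \in H_1$, the sequence $\zeta \mapsto \langle \phi, M_n(\zeta)\psi\rangle$ converges to $\zeta \mapsto \langle \phi, M(\zeta)\psi\rangle$ in $\s H(E)$ equipped with the compact-open topology, which for holomorphic scalar functions is the topology of uniform convergence on compact subsets of $E$.

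Next, since the operator-valued power series $\sum_{k=0}^{\infty}(\cdot-z)^{k}A_{nk}$ converges in operator norm on a neighbourhood of $z$ (and likewise for $M$), continuity of the inner product yields the scalar expansion
\[
\langle \phi, M_n(\zeta)\psi\rangle = \sum_{k=0}^{\infty}(\zeta-z)^{k}\langle \phi, A_{nk}\psi\rangle
\]
in a neighbourhood of $z$; hence $\langle \phi, A_{nk}\psi\rangle$ is precisely the $k$-th Taylor coefficient at $z$ of $\langle \phi, M_n(\cdot)\psi\rangle$, and similarly for $A_k$. Choosing any radius $r>0$ so that the circle $\gamma_r := \{\zeta \in \C : |\zeta - z| = r\}$ lies inside $E$, Cauchy's formula gives
\[
\langle \phi, A_{nk}\psi\rangle = \frac{1}{2\pi i}\oint_{\gamma_r}\frac{\langle \phi, M_n(\zeta)\psi\rangle}{(\zeta-z)^{k+1}}\,d\zeta.
\]
Since $\langle \phi, M_n(\cdot)\psi\rangle \to \langle \phi, M(\cdot)\psi\rangle$ uniformly on the compact set $\gamma_r$, the integrand converges uniformly, and passing to the limit under the integral yields $\langle \phi, A_{nk}\psi\rangle \to \langle \phi, A_k\psi\rangle$ for every $k \in \N_0$.

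As $\phi \in H_2$ and $\psi \in H_1$ were arbitrary, this is precisely the statement that $A_{nk} \to A_k$ in the weak operator topology of $L(H_1,H_2)$. The only small point requiring care is the identification of the scalar Taylor coefficients with $\langle \phi, A_{nk}\psi\rangle$; this is handled by the norm-convergence of the operator-valued series in a neighbourhood of $z$, which is a standard consequence of Cauchy--Hadamard applied in the operator-norm setting, together with continuity of $\langle \phi, \cdot \, \psi\rangle$.
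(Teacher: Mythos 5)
Your proof is correct and is the standard argument: unfold $\tau_{\mathcal M}$-convergence into locally uniform convergence of the scalar functions $\langle\phi,M_n(\cdot)\psi\rangle$, identify $\langle\phi,A_{nk}\psi\rangle$ as their Taylor coefficients, and pass to the limit in Cauchy's integral formula over a circle inside $E$. The paper refers this lemma to an external source (the author's dissertation) rather than proving it in situ, but the Cauchy-integral route you take is the canonical one, and the companion remark in the proof of Theorem~\ref{Th: 0-anal-ode} about Laurent coefficients converging ``by the residue theorem'' confirms this is the intended mechanism.
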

For a Hilbert space $H$ and $\nu>0$, we define
\[
   C_\nu(\R;H)\coloneqq \{\phi\in C(\R;H); \sup_{t\in\R}\abs{\exp(-\nu t)\phi (t)}_H<\infty\}.
\]
We endow $C_\nu(\R;H)$ with the norm $\abs{\cdot}_{C_\nu}:\phi\mapsto \sup_{t\in\R}\abs{\exp(-\nu t)\phi (t)}_H$. Recall from \cite[Lemma 3.1.59]{Picard} that $H_{\nu,1}(\R;H)$ continuously embeds into $C_\nu(\R;H)$. 
\begin{Le}[{\cite[Lemma 2.2]{Waurick2012}}]\label{Le: part_ptw} Let $H$ be a Hilbert space, $\nu>0$. If $(f_n)_n$ in $H_{\nu,1}(\R; H)$ is bounded and
converges pointwise to some $f\in H_{\nu,1}(\R;H)$, then
\[
   \partial_t^{-1}f_n(t) \tor n\infty \partial_t^{-1} f(t),
\]
for all $t\in \R$.
\end{Le}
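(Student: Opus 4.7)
The plan is to exploit the explicit integral representation of $\partial_{0,\nu}^{-1}$ together with the Sobolev embedding $H_{\nu,1}(\R;H)\hookrightarrow C_\nu(\R;H)$ and reduce the claim to Lebesgue's dominated convergence theorem.

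First I would recall that under the Fourier--Laplace calculus the operator $\partial_{0,\nu}^{-1}$ is given by the convolution with the exponentially weighted Heaviside kernel. A direct computation (or conjugation of $(\partial+\nu)^{-1}$ by $\exp(-\nu m)$) shows that for every $g\in H_{\nu,0}(\R;H)$
\[
   \bigl(\partial_{0,\nu}^{-1} g\bigr)(t)=\int_{-\infty}^{t} g(s)\dd s \qquad (t\in\R),
\]
where the integral is a Bochner integral which is well-defined for every $t$ by Cauchy--Schwarz with respect to the weight $\exp(-2\nu s)$.

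Next, I would use the continuous embedding $H_{\nu,1}(\R;H)\hookrightarrow C_\nu(\R;H)$ recalled right before the statement of the lemma. Together with the assumed $H_{\nu,1}$-boundedness of $(f_n)_n$ this yields a constant $C>0$, independent of $n$, with
\[
   \abs{f_n(s)}_H \leqq C\,\e^{\nu s} \qquad (s\in\R,\; n\in\N).
\]
The function $s\mapsto C\e^{\nu s}$ is integrable on every half-line $(-\infty,t]$, so it serves as a uniform integrable majorant for $(f_n)_n$ restricted to $(-\infty,t]$.

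Fixing $t\in\R$ and using the pointwise convergence $f_n(s)\to f(s)$ for every $s$ together with the above majorant, Lebesgue's dominated convergence theorem (in the Bochner setting) gives
\[
   \bigl(\partial_{0,\nu}^{-1} f_n\bigr)(t)=\int_{-\infty}^{t}f_n(s)\dd s \;\longrightarrow\; \int_{-\infty}^{t}f(s)\dd s=\bigl(\partial_{0,\nu}^{-1} f\bigr)(t),
\]
which is the desired conclusion. No step here is a real obstacle; the only subtlety is to make sure that pointwise evaluation of the $H_{\nu,1}$-elements is justified (one works with the continuous representatives provided by the embedding into $C_\nu(\R;H)$) and that the Heaviside-type integral representation of $\partial_{0,\nu}^{-1}$ is indeed available on $H_{\nu,0}(\R;H)$, both of which are built into the framework recalled in Section \ref{section: set}.
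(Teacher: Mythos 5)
Your proof is correct and is essentially the same argument that underlies \cite[Lemma 2.2]{Waurick2012}: use the explicit causal antiderivative representation $(\partial_{0,\nu}^{-1}g)(t)=\int_{-\infty}^{t}g(s)\dd s$, obtain from the embedding $H_{\nu,1}(\R;H)\hookrightarrow C_\nu(\R;H)$ and the $H_{\nu,1}$-bound a uniform exponential majorant $\abs{f_n(s)}_H\leqq C\e^{\nu s}$ (integrable over $(-\infty,t]$), and conclude by dominated convergence. All the subtleties you flag — working with continuous representatives and justifying the Bochner integral via Cauchy--Schwarz against the weight $\e^{-2\nu s}$ — are exactly the points to check, and you handle them correctly.
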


\begin{Sa}[weak-strong principle, {\cite[Theorem 2.3]{Waurick2012}}]\label{Le: Mat_law_ptwise} Let $H$ be a Hilbert space, $\eps>0$, $(M_n)_n$ be a convergent sequence in $\s H^\infty_{{\textnormal{w}}}(B_\C(0,\eps);L(H_1,H_2))$ with limit $M$. Then, for $\nu>2/\eps$ and any bounded sequence $(v_n)_{n}$ in
$H_{\nu,1}(\R;H_1)$ and $v\in H_{\nu,1}(\R;H_1)$ such that
$v_n(t)\tor n\infty v(t)$ in $H_1$ for all $t\in\R$,
\[
    \textnormal{w-}\lim_{n\to\infty}(M_n(\partial_t^{-1})v_n)(t) = (M(\partial_t^{-1})v)(t)\in H_2,
\]
for all $t\in\R$.
\end{Sa}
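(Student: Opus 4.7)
The plan is to Taylor-expand the material laws around the origin and reduce weak pointwise convergence of $M_n(\partial_0^{-1})v_n(t)$ to weak convergence of finite sums via uniform geometric tail estimates. Write $M_n(z)=\sum_{k=0}^\infty A_{nk}z^k$ and $M(z)=\sum_{k=0}^\infty A_k z^k$ as Taylor series in $B_\C(0,\eps)$. By Lemma~\ref{Le: Conv_of_coeff}, $A_{nk}\rightharpoonup A_k$ in the weak operator topology of $L(H_1,H_2)$ for every $k\in\N_0$. Since convergence in the compact-open topology on $\s H(B_\C(0,\eps))$ is uniform on every compact disc $\overline{B(0,r)}\subset B_\C(0,\eps)$, two applications of the Banach--Steinhaus theorem yield $\sup_{n}\sup_{z\in\overline{B(0,r)}}\Abs{M_n(z)}\leq C_r<\infty$, and Cauchy's inequality then gives the uniform coefficient bound $\Abs{A_{nk}}\leq C_r r^{-k}$.

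Since $\nu>2/\eps$, I fix $r\in(2/\nu,\eps)$, so that $(r\nu)^{-1}<1/2$. Under this choice the series $\sum_k A_{nk}\partial_0^{-k}$ converges in the operator norm on $H_{\nu,0}(\R;H_1)$ and represents $M_n(\partial_0^{-1})$. Using the continuous embedding $H_{\nu,1}(\R;H_1)\hookrightarrow C_\nu(\R;H_1)$ together with $\Abs{\partial_0^{-1}}_{L(H_{\nu,0})}=1/\nu$, I get, for all $k\in\N_0$ and uniformly in $n$,
\[
   \Abs{A_{nk}\partial_0^{-k}v_n(t)}_{H_2}\leq D\,(r\nu)^{-k}e^{\nu t},
\]
with $D$ depending on $C_r$, the embedding constant and $\sup_n\Abs{v_n}_{H_{\nu,1}}$; the analogous bound holds for $A_k\partial_0^{-k}v(t)$. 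This supplies the uniform geometric tail estimate that permits the interchange of weak limit and infinite summation.

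For each fixed $k\in\N_0$, iterating Lemma~\ref{Le: part_ptw} $k$ times, noting that $(\partial_0^{-j}v_n)_n$ inherits boundedness in $H_{\nu,1}(\R;H_1)$ and pointwise convergence to $\partial_0^{-j}v$ from the case $j-1$, yields $\partial_0^{-k}v_n(t)\to\partial_0^{-k}v(t)$ strongly in $H_1$ for every $t\in\R$. Combined with $A_{nk}\rightharpoonup A_k$ in the weak operator topology, which implies $A_{nk}^*\phi\rightharpoonup A_k^*\phi$ weakly in $H_1$ and in particular $(A_{nk}^*\phi)_n$ bounded, the standard weak--strong pairing argument gives
\[
    \textnormal{w-}\lim_{n\to\infty}A_{nk}\partial_0^{-k}v_n(t)=A_k\partial_0^{-k}v(t)\quad\text{in }H_2,
\]
for every fixed $k$ and every $t\in\R$.

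The main obstacle is exchanging the weak limit $n\to\infty$ with the infinite summation over $k$, since only the finite-sum case is directly accessible from the previous paragraph. Testing against arbitrary $\phi\in H_2$, splitting the series at $N$, and invoking the tail bound from paragraph two, I estimate
\[
    \bigl|\langle\phi,(M_n(\partial_0^{-1})v_n-M(\partial_0^{-1})v)(t)\rangle\bigr|\leq\sum_{k=0}^{N-1}\bigl|\langle\phi,(A_{nk}\partial_0^{-k}v_n-A_k\partial_0^{-k}v)(t)\rangle\bigr|+\frac{2D\Abs{\phi}e^{\nu t}(r\nu)^{-N}}{1-(r\nu)^{-1}}.
\]
Letting first $n\to\infty$ in the $N$-th partial sum (handled by paragraph three) and then $N\to\infty$ to kill the tail completes the proof.
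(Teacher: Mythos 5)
Your proof is correct and follows what is clearly the intended line of argument: it uses exactly the ingredients the paper supplies for this purpose---Lemma~\ref{Le: Conv_of_coeff} for weak operator convergence of the Taylor coefficients $A_{nk}\to A_k$, iterated applications of Lemma~\ref{Le: part_ptw} for pointwise strong convergence of $\partial_0^{-k}v_n(t)$, the embedding $H_{\nu,1}(\R;H_1)\hookrightarrow C_\nu(\R;H_1)$ together with Cauchy estimates for the uniform geometric tail bound, and the choice $r\in(2/\nu,\eps)$ to make $(r\nu)^{-1}<1/2$. The paper itself only cites \cite[Theorem 2.3]{Waurick2012} rather than reproducing the argument, but your reduction to finite Taylor sums with a uniform tail is the natural proof given the stated lemmas, and your use of Banach--Steinhaus (twice) to extract the uniform bound $C_r$ on $\overline{B(0,r)}$ from compact-open convergence is exactly the step needed to make the tail estimate uniform in $n$.
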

\begin{proof}
 In \cite{Waurick2012} the proof is given for the case $H_1=H_2$. The assertion follows analogously with obvious modifications.
\end{proof}

\begin{Fo}\label{Cor:W-Strong-Product} Let $H_1,H_2$ be Hilbert spaces, $\eps>0$, $(M_n)_n$ be a convergent sequence in $\s H^\infty_{{\textnormal{w}}}(B_\C(0,\eps);L(H_1,H_2))$ with limit $M\in\s H^\infty_{\textnormal{w}}(B_\C(0,\eps);L(H_1,H_2))$. Let $\nu>2/\eps$, $k\in\Z$ and let $(v_n)_{n}$ be bounded in $H_{\nu,k}(\R;H_1)$, $v\in H_{\nu,k}(\R;H_1)$. Assume there is $l\in \N_0$ such that $\partial_t^{-l}v_n\in H_{\nu,1}(\R;H_1)$ and
$\partial_t^{-l}v_n(t)\tor n\infty \partial_t^{-l}v(t)$ in $H_1$ for all $t\in\R$. Then
\[
    \textnormal{w-}\lim_{n\to\infty}M_n(\partial_t^{-1})v_n = M(\partial_t^{-1})v\in H_{\nu,k}(\R;H_2).
\] 
\end{Fo}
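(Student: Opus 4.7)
The strategy is to reduce the statement to the pointwise weak-strong principle (Theorem \ref{Le: Mat_law_ptwise}) by applying the integration operator $\partial_0^{-l}$ and exploiting the fact that, since $M_n(\partial_0^{-1})$ is defined via the $\partial_0^{-1}$-functional calculus, it commutes with $\partial_0^{-l}$. Set $w_n \coloneqq \partial_0^{-l}v_n$ and $w \coloneqq \partial_0^{-l}v$. By the hypothesis, $w_n\in H_{\nu,1}(\R;H_1)$ with $w_n(t)\to w(t)$ in $H_1$ for every $t\in\R$, and the boundedness of $(v_n)_n$ in $H_{\nu,k}(\R;H_1)$ transfers to boundedness of $(w_n)_n$ in $H_{\nu,k+l}(\R;H_1)$, and in particular in $H_{\nu,1}(\R;H_1)$ (the latter being the hypothesis required by Theorem \ref{Le: Mat_law_ptwise}).

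Next I apply Theorem \ref{Le: Mat_law_ptwise} to the sequence $(w_n)_n$ to obtain, for every $t\in\R$, the pointwise weak convergence
\[
    \textnormal{w-}\lim_{n\to\infty}\bigl(M_n(\partial_0^{-1})w_n\bigr)(t)=\bigl(M(\partial_0^{-1})w\bigr)(t)\quad\text{in }H_2.
\]
Using commutativity, $M_n(\partial_0^{-1})w_n=\partial_0^{-l}M_n(\partial_0^{-1})v_n$ and analogously for the limit.

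For the global weak convergence, I combine this with Lemma \ref{Le: loc_uni_weak}: the operators $M_n(\partial_0^{-1})$ are uniformly bounded on $L\bigl(H_{\nu,k}(\R;H_1),H_{\nu,k}(\R;H_2)\bigr)$, so $\bigl(M_n(\partial_0^{-1})v_n\bigr)_n$ is bounded in $H_{\nu,k}(\R;H_2)$. Pick any weakly convergent subsequence with limit $u\in H_{\nu,k}(\R;H_2)$; applying the continuous operator $\partial_0^{-l}$ yields $M_{n_j}(\partial_0^{-1})w_{n_j}\rightharpoonup \partial_0^{-l}u$ in $H_{\nu,k+l}(\R;H_2)$. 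To identify $\partial_0^{-l}u$ with $\partial_0^{-l}M(\partial_0^{-1})v$, I exploit the embedding $H_{\nu,1}(\R;H_2)\hookrightarrow C_\nu(\R;H_2)$, which gives a uniform pointwise bound $|M_n(\partial_0^{-1})w_n(t)|_{H_2}\,e^{-\nu t}\le C$. Testing against $\phi\in C_c^\infty(\R;H_2)$ and invoking the dominated convergence theorem together with the pointwise weak convergence from Step 2, I obtain
\[
  \int_{\R}\scp{\phi(t)}{M_n(\partial_0^{-1})w_n(t)}_{H_2}e^{-2\nu t}\,\mathrm dt\to\int_{\R}\scp{\phi(t)}{M(\partial_0^{-1})w(t)}_{H_2}e^{-2\nu t}\,\mathrm dt.
\]
Since $C_c^\infty(\R;H_2)$ is dense in the dual of $H_{\nu,k+l}(\R;H_2)$ (with respect to the $H_{\nu,0}$-pairing) and the sequence is norm-bounded, this forces $\partial_0^{-l}u=\partial_0^{-l}M(\partial_0^{-1})v$; applying $\partial_0^l$ gives $u=M(\partial_0^{-1})v$. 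As every weakly convergent subsequence has the same limit, the full sequence converges weakly, which is the claim.

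The main obstacle is the last bookkeeping step: transporting the pointwise-in-$t$ weak convergence supplied by Theorem \ref{Le: Mat_law_ptwise} into weak convergence in the Sobolev-lattice Hilbert space $H_{\nu,k}(\R;H_2)$. This is where the embedding $H_{\nu,1}\hookrightarrow C_\nu$ is crucial, since it supplies the dominating function needed to apply Lebesgue's theorem on compactly supported test functions, and it is the only place where the assumption $\partial_0^{-l}v_n\in H_{\nu,1}(\R;H_1)$ for some $l\in\N_0$ enters in an essential way.
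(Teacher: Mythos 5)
Your proof follows the same overall strategy as the paper's: extract a weakly convergent subsequence by boundedness, invoke the pointwise weak–strong principle (Theorem~\ref{Le: Mat_law_ptwise}), use commutativity of $M_n(\partial_0^{-1})$ with powers of $\partial_0^{-1}$, and identify the subsequential limit. However there is a gap in your first step. You set $w_n=\partial_0^{-l}v_n$ and assert that boundedness of $(v_n)_n$ in $H_{\nu,k}$ gives boundedness of $(w_n)_n$ in $H_{\nu,k+l}$ ``and in particular in $H_{\nu,1}$.'' That last ``in particular'' requires $k+l\geqq 1$, which is not part of the hypotheses ($k\in\Z$ is arbitrary and $l\in\N_0$ is whatever the hypothesis provides). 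If, say, $k=-1$ and $l=1$, you only get boundedness in $H_{\nu,0}$, and Theorem~\ref{Le: Mat_law_ptwise} (which needs a bounded sequence in $H_{\nu,1}$) cannot be applied to $(w_n)_n$. The paper avoids this by integrating $\abs{k}+l$ times rather than $l$ times: the sequence $(\partial_0^{-\abs{k}-l}v_n)_n$ lands in $H_{\nu,k+\abs{k}+l}$, and the extra $\abs{k}$ compensates for negative $k$. This is the one idea you are missing.

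Your identification step also takes a different route from the paper's, though both are sound once the boundedness issue is fixed. The paper applies $\partial_0^{-\abs{k}-l}$ to the subsequential weak limit $w$ and uses that point evaluation $f\mapsto f(t)$ is a bounded linear map $H_{\nu,1}(\R;H_2)\to H_2$ (via $H_{\nu,1}\hookrightarrow C_\nu$), hence weakly continuous; so the weak convergence $\partial_0^{-\abs{k}-l}M_{n_j}(\partial_0^{-1})v_{n_j}\rightharpoonup \partial_0^{-\abs{k}-l}w$ immediately gives pointwise weak convergence, which by Theorem~\ref{Le: Mat_law_ptwise} identifies $\partial_0^{-\abs{k}-l}w(t)=\partial_0^{-\abs{k}-l}M(\partial_0^{-1})v(t)$ for every $t$. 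You instead go the other direction: from pointwise weak convergence, through the $C_\nu$-domination and Lebesgue's theorem against $C_c^\infty$ test functions, to weak convergence in the Sobolev lattice. This works, but it is a longer detour that essentially re-derives the weak continuity of point evaluation; the paper's argument exploiting that fact directly is cleaner and is what you should use once you replace $\partial_0^{-l}$ by $\partial_0^{-\abs{k}-l}$.
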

\begin{proof}
 Since $(M_n(\partial_t^{-1})v_n)_n$ is bounded in $H_{\nu,k}(\R;H_2)$, there is a subsequence with indices $(n_j)_j$ weakly converging to some $w\in H_{\nu,k}(\R;H_2)$. The assumption guarantees that $(\partial_t^{-\abs{k}-l}v_n)_n$ is bounded in $H_{\nu,1}(\R;H_1)$. Moreover, by Lemma \ref{Le: part_ptw}, $(\partial_t^{-\abs{k}-l}v_n)_n$ converges pointwise to $\partial_t^{-\abs{k}-l}v$. Thus, by Theorem \ref{Le: Mat_law_ptwise} and the weak continuity of point-evaluation, we deduce that, for $t\in\R$,
\begin{align*}
   (\partial_t^{-\abs{k}-l}w)(t) &= \textnormal{w-}\lim_{j\to\infty}(\partial_t^{-\abs{k}-l}M_{n_j}(\partial_t^{-1})v_{n_j})(t) \\
                                 &= \textnormal{w-}\lim_{j\to\infty}M_{n_j}(\partial_t^{-1})\partial_t^{-\abs{k}-l}v_{n_j}(t) \\
                                 &= M(\partial_t^{-1})\partial_t^{-\abs{k}-l}v(t) =\partial_t^{-\abs{k}-l} M(\partial_t^{-1})v(t).
\end{align*}
Hence, $w=M(\partial_t^{-1})v$.
\end{proof}
\section{A general compactness theorem for the homogenization of evolutionary equations}\label{section: hom}

%
We introduce the concept of $G$-convergence to bridge the gap between the classical approach to homogenization theory and the Hilbert space perspective discussed here. 

\begin{Def}[$G$-convergence, {\cite[p.\ 74]{Gcon1}}] Let $H$ be a Hilbert space. Let $(A_n:D(A_n)\subseteqq H\to H)_n$ be a 
sequence of one-to-one mappings onto $H$ and let $B:D(B) \subseteqq H\to H$ be one-to-one.
We say that $(A_n)_n$ \emph{$G$-converges to $B$} if for all $f\in H$ the sequence $(A_n^{-1}(f))_n$ 
converges weakly to some $u$, which satisfies $u\in D(B)$ and $B(u)=f$. $B$ is called a \emph{$G$-limit} 
of $(A_n)_n$. We say that $(A_n)_n$ \emph{strongly $G$-converges to $B$} in $H$, if for all weakly 
converging sequences $(f_n)_n$ in $H$, $(A_n^{-1}(f_n))_n$ weakly converges to some $u$, which satisfies $u\in D(B)$ 
and $B(u)=\textnormal{w-}\lim_{n\to\infty}f_n$.
\end{Def}
\begin{Prop}\label{Prop:G_unique}  The $G$-limit is uniquely determined. 
\end{Prop}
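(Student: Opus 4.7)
The plan is to argue directly from the definition, exploiting two uniqueness properties: uniqueness of weak limits in $H$, and the assumed injectivity of the candidate $G$-limits.

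Suppose $B_1:D(B_1)\subseteqq H\to H$ and $B_2:D(B_2)\subseteqq H\to H$ are both $G$-limits of $(A_n)_n$. Fix $f\in H$ arbitrarily. By the defining property applied to $B_1$, the sequence $(A_n^{-1}(f))_n$ converges weakly to some $u_1\in D(B_1)$ with $B_1(u_1)=f$. Applied to $B_2$, the same sequence converges weakly to some $u_2\in D(B_2)$ with $B_2(u_2)=f$. Since weak limits in $H$ are unique, $u_1=u_2=:u$, so $u\in D(B_1)\cap D(B_2)$ and $B_1(u)=B_2(u)=f$.

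This shows that $B_1$ and $B_2$ are both surjective onto $H$, and for every $f\in H$ the preimage $u$ lies in both domains. Since $B_1$ and $B_2$ are assumed one-to-one, the equations $B_1(u)=f$ and $B_2(u)=f$ determine $u$ uniquely in $D(B_1)$ and $D(B_2)$ respectively; hence $B_1^{-1}(f)=u=B_2^{-1}(f)$ for every $f\in H$. Consequently $B_1^{-1}=B_2^{-1}$ as maps $H\to H$, which in particular gives $D(B_1)=R(B_1^{-1})=R(B_2^{-1})=D(B_2)$. Inverting again yields $B_1=B_2$.

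There is no real obstacle here; the argument is essentially bookkeeping around the definition. The only point worth being careful about is not to conclude $D(B_1)=D(B_2)$ purely from pointwise agreement on a common subset, but rather via the equality of the inverses on all of $H$, which is why the surjectivity built into the definition (every $f\in H$ is attained) is used explicitly. The analogous statement for strong $G$-convergence follows from the same proof by taking the constant sequence $f_n=f$.
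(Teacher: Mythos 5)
Your proof is correct and takes essentially the same route as the paper: both hinge on uniqueness of weak limits in $H$ (forcing $u$ to be determined by $f$), the surjectivity of a $G$-limit onto $H$ built into the definition, and its assumed injectivity. The paper packages this by constructing the relation $C=\{(u,f): u=\textnormal{w-}\lim A_n^{-1}f\}$ and showing $C=B$ for any $G$-limit $B$, whereas you unroll the same reasoning with two candidate limits $B_1,B_2$ and show $B_1^{-1}=B_2^{-1}$ pointwise on $H$; the content is identical.
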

\begin{proof} Let $H$ be a Hilbert space. Let $(A_n)_n$ be a sequence of one-to-one onto mappings which is $G$-convergent to the one-to-one mapping $B:D(B)\subseteqq H\to H$.
 Define $C\coloneqq  \{ (u,f)\in H\oplus H; u=\textnormal{w-}\lim_{n\to\infty} A_n^{-1}(f)\}$. Then $C\subseteqq B$, so that $C$ is a mapping. 
Moreover, since $C$ is onto and $B$ is one-to-one, we conclude that $C=B$. 
\end{proof}

\begin{rems}\label{rem:wot} Assume, in addition, that $(A_n)_n$ in the above definition is a sequence of linear and closed operators. Further assume $B$ to be closed and linear. Then the above definition of $G$-convergence is precisely convergence of the resolvents in the weak operator topology, which is the original definition in \cite{Gcon1} in the Hilbert space setting.  
\end{rems}

We now prove compactness results concerning $G$-convergence for operators that are associated with
evolutionary equations. More precisely, we will deal with the following cases:

\begin{Def} Let $H_1,H_2$ be Hilbert spaces. We say a pair $((M_n)_n,\s A)$ satisfies 
\begin{enumerate}
 \item[\textbf{(P1)}]\label{p1} if there exists $\eps,r,c >0$ such that $(M_n)_n$ is a bounded sequence in \[\s H^\infty(B(0,\eps);L(H_1))\cap \s H^{\infty,c}(B(r,r);L(H_1))\] and \[\s A:D(\s A)\subseteqq H_1\to H_1\] is skew-selfadjoint and the embedding $(D(\s A),\abs{\cdot}_{\s A})\hookrightarrow (H_1,\abs{\cdot}_{H_1})$ is compact,
 \item[\textbf{(P2)}]\label{p4} if there exists $\eps,c,r>0$ such that $(M_n)_n = \left( \begin{pmatrix} M_{11,n} & M_{12,n} \\ M_{21,n} & M_{22,n} \end{pmatrix}\right)_n$ is bounded in $\s H^\infty(B(0,\eps);L(H_1\oplus H_2))\cap \s H^{\infty,c}(B(r,r);L(H_1\oplus H_2))$ and $\s A = \begin{pmatrix} A & 0 \\ 0 & 0 \end{pmatrix}$ is such that $((M_{11,n})_n,A)$ satisfies (P1). Moreover,
\ben
   \item for all $n\in\N$, $R(M_1(0))=R(M_n(0))$ and $M_n(0)\geqq c$ on $R(M_1(0))$,
   \item denoting by $q_j:H_j \to R(\pi_j^*)\cap N(M_1(0))$ $(j\in\{1,2\})$ the canonical ortho-projections, we have for all $n\in\N$ \[\left(\left(q_2M_{22,n}'(0)q_2^*\right)^{-1}q_2M_{21,n}'(0)q_1^{*}\right)^*=q_1M_{12,n}'(0)q_2^{*}\left(q_2M_{22,n}'(0)q_2^*\right)^{-1}.\] 
\een
\end{enumerate}
\end{Def}

With these definitions, the core result in \cite{Waurick2012} now reads as follows.
\begin{Sa}[{\cite[Theorem 3.5]{Waurick2012}}]\label{Thm:(P1)} Let $H$ be a Hilbert space and assume that $((M_n)_n,\s A)$ satisfies (P1) and that $(M_n)_n$ converges to $N\in \s H_{\textnormal{w}}^\infty(B(0,\eps);L(H))$.  Then there exists $\nu_0\geqq 0$ such that for all $\nu>\nu_0$, $(\partial_tM_{n_k}(\partial_t^{-1})+\s A)_k$ strongly $G$-converges to $\partial_t N(\partial_t^{-1})+\s A$ in $H_{\nu,-1}(\R;H)$. Moreover, $N\in \s H^{\infty,c}(B(r,r);L(H))$ and
\[
   \partial_t^{-3}(\partial_t M_{n}(\partial_t^{-1})+\s A)^{-1}f_n(t) \to \partial_t^{-3}(\partial_t N(\partial_t^{-1})+\s A)^{-1}(\textnormal{w-}\lim_{n\to\infty}f_n)(t)\in H
\]
  as $n\to\infty$ for all $t\in\R$ and all weakly convergent sequences $(f_n)_n$ in $H_{\nu,-1}(\R;H)$. 
\end{Sa}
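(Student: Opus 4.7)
The proof has three components, one for each assertion, and the main driver is passing to the limit in $\partial_0 M_n(\partial_0^{-1}) u_n + \s A u_n = f_n$ by combining the compact embedding $(D(\s A),\abs{\cdot}_{\s A}) \hookrightarrow\hookrightarrow (H,\abs{\cdot}_H)$ with the smoothing effect of $\partial_0^{-1}$.

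First I would extend $N$ to $B(r,r)$ and show it is a $(c)$-material law. Boundedness of $(M_n)_n$ on $B(r,r)$ and Theorem \ref{Thm: seq_comp} supply, along any chosen subsequence, a further $\tau_{\s M}$-limit $\tilde N$ on $B(r,r)$; by Lemma \ref{Le: Conv_of_coeff} and analytic continuation $\tilde N$ agrees with $N$ on $B(0,\eps)\cap B(r,r)$, so $N$ extends uniquely to $\s H^\infty(B(r,r);L(H))$, independently of the extracted subsequence. Testing against vectors $\phi\in H$, the scalar inequality $\Re z^{-1}\langle\phi,M_n(z)\phi\rangle\geqq c\Abs{\phi}_H^2$ passes to the weak limit on all of $B(r,r)$, so $N\in\s H^{\infty,c}(B(r,r);L(H))$. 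Consequently, Theorem \ref{Th: SolTh} applies to $\partial_0 N(\partial_0^{-1})+\s A$ and fixes a $\nu_0\geqq 0$ such that unique solvability with bound $1/c$ holds for all $\nu>\nu_0$ and for every $M_n$ as well as for $N$.

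Next I would produce the pointwise strong convergence of $\partial_0^{-3}u_n$. Let $(f_n)_n$ be weakly convergent in $H_{\nu,-1}(\R;H)$ with limit $f$, and set $u_n:=(\partial_0 M_n(\partial_0^{-1})+\s A)^{-1}f_n$; the uniform bound $1/c$ from Theorem \ref{Th: SolTh} makes $(u_n)_n$ bounded in $H_{\nu,-1}(\R;H)$ and I extract a weakly convergent subsequence $u_n \rightharpoonup u$. Rearranging the equation gives $\s A u_n = f_n - \partial_0 M_n(\partial_0^{-1})u_n$, bounded in $H_{\nu,-2}(\R;H)$, so $(\partial_0^{-2}u_n)_n$ is bounded in $H_{\nu,0}(\R;D(\s A))$ while its time-derivative $\partial_0^{-1}u_n$ is bounded in $H_{\nu,0}(\R;H)$. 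The compact embedding $D(\s A)\hookrightarrow\hookrightarrow H$, combined with this uniform time regularity, fuels an Aubin--Lions type lemma in the exponentially weighted $L_2$-setting, yielding $\partial_0^{-2}u_n \to \partial_0^{-2}u$ strongly in $H_{\nu,0}(\R;H)$ along a further subsequence. Since $\partial_0^{-1}\colon H_{\nu,0}(\R;H)\to H_{\nu,1}(\R;H)\hookrightarrow C_\nu(\R;H)$ is continuous, this upgrades to $\partial_0^{-3}u_n(t)\to \partial_0^{-3}u(t)$ in $H$, uniformly in the $C_\nu$-norm and in particular pointwise at each $t\in\R$. I expect this Aubin--Lions step to be the main obstacle, since it is precisely where the qualitative compactness hypothesis on $\s A$ has to be converted into the strong pointwise information demanded by the weak-strong principle.

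Finally I would identify $u$ and conclude. Corollary \ref{Cor:W-Strong-Product}, applied with $v_n:=u_n$, $k=-1$ and $l=3$, yields $M_n(\partial_0^{-1})u_n\rightharpoonup N(\partial_0^{-1})u$ in $H_{\nu,-1}(\R;H)$; hence $\partial_0 M_n(\partial_0^{-1})u_n\rightharpoonup \partial_0 N(\partial_0^{-1})u$ in $H_{\nu,-2}(\R;H)$, and closedness of $\s A$ on the Sobolev lattice propagates $u_n\rightharpoonup u$ to $\s A u_n\rightharpoonup \s A u$ at the appropriate level. Passing to the weak limit in the equation produces $(\partial_0 N(\partial_0^{-1})+\s A)u=f$. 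The unique solvability of the limit equation identifies $u$ with $(\partial_0 N(\partial_0^{-1})+\s A)^{-1}f$ independently of the chosen subsequence, so the whole sequence converges, giving strong $G$-convergence and, simultaneously, the pointwise convergence $\partial_0^{-3}u_n(t)\to \partial_0^{-3}(\partial_0 N(\partial_0^{-1})+\s A)^{-1}(\textnormal{w-}\lim_{n\to\infty}f_n)(t)$ asserted in the theorem.
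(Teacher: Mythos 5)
The theorem you are proving is cited from another paper ([Waurick2012, Theorem 3.5]); the present paper does not reproduce its proof and uses it as a black box, so there is no in-paper argument to compare against line by line. Your overall architecture --- first upgrading $N$ to a $(c)$-material law on $B(r,r)$ via Theorem \ref{Thm: seq_comp} and Lemma \ref{Le: Conv_of_coeff} with analytic continuation and weak-limit stability of the accretivity estimate; then establishing pointwise strong convergence of $\partial_0^{-3}u_n(t)$; then closing via Corollary \ref{Cor:W-Strong-Product} and unique solvability --- is consistent with the toolkit laid out in Sections \ref{section: set}--\ref{section: top} and with how the present paper invokes this result inside the proof of Theorem \ref{Thm:(P2)}.

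The one step I would flag as both a departure and a potential gap is your appeal to an Aubin--Lions type lemma in the weighted $L_2$-setting, which you yourself identify as the main obstacle. There is no standard Aubin--Lions statement over all of $\R$ with the measure $\exp(-2\nu t)\,\mathrm{d}t$: the time domain is unbounded, and the exponential weight does not give you compactness of $H_{\nu,1}(\R;H)\hookrightarrow H_{\nu,0}(\R;H)$, so you would have to build and justify such a lemma, which your proposal leaves as a black box. This intermediate strong-in-$L_2$-time convergence is also more than you need. A cleaner route, and the one consonant with Lemma \ref{Le: part_ptw} and Theorem \ref{Le: Mat_law_ptwise}, works pointwise in $t$: from $\s A u_n = f_n - \partial_0 M_n(\partial_0^{-1})u_n$ bounded in $H_{\nu,-2}(\R;H)$ one gets $\partial_0^{-3}u_n$ bounded in $H_{\nu,1}(\R;D(\s A))$ (with $D(\s A)$ carrying the graph norm), hence in $C_\nu(\R;D(\s A))$, so for each fixed $t$ the set $\{\partial_0^{-3}u_n(t)\}_n$ is bounded in $D(\s A)$ and therefore relatively compact in $H$; combined with the weak convergence $\partial_0^{-3}u_n(t)\rightharpoonup\partial_0^{-3}u(t)$ (weak continuity of point evaluation in $H_{\nu,1}\hookrightarrow C_\nu$), this immediately yields $\partial_0^{-3}u_n(t)\to\partial_0^{-3}u(t)$ in $H$ for every $t$, with no temporal compactness lemma needed. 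Replacing the Aubin--Lions step by this pointwise-in-$t$ compactness argument would make the proposal complete; the rest of your proof is sound.
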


The generalization of this theorem to the case $(P2)$ requires a homogenization result for the case of $A=0$, that is to say, a result on the homogenization of ordinary integro-differential equations. Since we deal with a possibly degenerate case in the sense of \cite[Section 3.3]{PicPhy}, we cannot use the homogenization result for ordinary integro-differential equations already established in \cite[Theorem 5.2]{Waurick}. The refined argument is tailored for the $0$-analytic case (cf.\ Section \ref{section: aux}), which, however, does not cover the results in \cite{Waurick}, see also \cite{W2014G} for a related result.

\begin{Sa}\label{Th: 0-anal-ode} Let $H$ be a separable Hilbert space, $\eps,c,d,r>0$. Let $(M_n)_n$ be a bounded sequence in $\s H^\infty(B(0,\eps);L(H))\cap \s H^{\infty,c}(B(r,r);L(H))$ and assume that for all $n\in \N$, $M_n(0)\geqq d$ on $R(M_n(0))=R(M_1(0))$. Then there exists $r'\in(0,r]$ and a strictly monotone sequence of positive integers $(n_k)_k$ such that, for $\nu>1/(2r')$, $(\partial_t M_{n_k}(\partial_t^{-1}))_k$ $G$-converges to $\partial_t\mu(\partial_t^{-1})$ in $H_{\nu,-1}(\R;H)$, where $\mu$ has the following properties: there is $\eps',c'>0$ such that
\ben
 \item $\mu \in \s H^{\infty}(B(0,\eps');L(H))\cap \s H^{\infty,c'}(B(r',r');L(H))$,
 \item $R(\mu(0))=R(M_1(0))$,
 \item for all open $E\subseteqq \C$ relatively compact in $B(0,\eps')\setminus \{0\}$ ($:\iff E\subset\subset B(0,\eps')\setminus \{0\}$),  
\[
  M_{n_k}(\cdot)^{-1} \to \mu(\cdot)^{-1} \in \s H^{\infty}_\textnormal{w}(E;L(H)) \quad (k\to\infty).
\]
\een
\end{Sa}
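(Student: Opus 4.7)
The plan is to apply sequential compactness (Theorem \ref{Thm: seq_comp}) to the sequence $(z \mapsto zM_n(z)^{-1})_n$ of inverses of solution-operator symbols, rather than to $(M_n)_n$ itself. This is forced by the harmonic-vs-arithmetic phenomenon emphasised in Section \ref{section: disco}: in general, the weak operator limit of $M_n^{-1}$ is not the inverse of the weak operator limit of $M_n$, so the homogenised material law $\mu$ must be characterised implicitly as the inverse of a limit of inverses.

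As a preliminary step, set $V := R(M_1(0))$ and decompose $H = V \oplus V^\perp$. Since $R(M_n(0)) = V$, the block form reads
\[
 M_n(z) = \begin{pmatrix} M_{n,11}(z) & M_{n,12}(z) \\ z\tilde M_{n,21}(z) & z\tilde M_{n,22}(z) \end{pmatrix}
\]
with $\tilde M_{n,ij}$ analytic at $0$. The hypothesis $M_n(0) \geqq d$ on $V$ forces $M_{n,11}(z)$ to be invertible on a small disc about $0$, uniformly in $n$. Testing the coercivity $\Re z^{-1}M_n(z) \geqq c$ on vectors of the form $\bigl(-M_{n,11}(z)^{-1} M_{n,12}(z) x_2,\,x_2\bigr)$ with $x_2 \in V^\perp$ and $z \in B(r,r)$ yields the Schur-complement coercivity $\Re K_n(z) \geqq c$ on $V^\perp$, where $K_n(z) := \tilde M_{n,22}(z) - \tilde M_{n,21}(z) M_{n,11}(z)^{-1} M_{n,12}(z)$; by analytic continuation this extends to a neighbourhood of $0$, uniformly in $n$. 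The block-inverse (Schur) formula then shows that $M_n(z)$ is invertible on $B(0,\eps')\setminus\{0\}$ for some $\eps' \in (0,r]$ independent of $n$, with $z \mapsto zM_n(z)^{-1}$ uniformly bounded in $\s H^\infty(B(0,\eps'); L(H))$.

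Theorem \ref{Thm: seq_comp} then produces a subsequence $(n_k)_k$ with $zM_{n_k}(z)^{-1} \to \Lambda$ in $\s H_{\textnormal{w}}^\infty(B(0,\eps'); L(H))$; define $\mu(z) := z\Lambda(z)^{-1}$ wherever $\Lambda(z)$ is invertible. The elementary bound $\Re T^{-1} \geqq c/\|T\|^2$ (valid whenever $\Re T \geqq c$), applied to $T = z^{-1}M_n(z)$ with $\|T\| \leqq K/|z|$ where $K := \sup_n \|M_n\|_\infty$, gives $\Re(zM_n(z)^{-1}) \geqq c|z|^2/K^2$ on $B(r,r)$; this inequality is preserved under weak limits, so $\Re\Lambda(z) \geqq c|z|^2/K^2$ and $\Lambda(z)$ is invertible for $z \neq 0$. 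Property (iii) is then immediate: on any $E \subset\subset B(0,\eps')\setminus\{0\}$, $M_{n_k}(z)^{-1} = \Lambda_{n_k}(z)/z \to \Lambda(z)/z = \mu(z)^{-1}$ in $\s H_{\textnormal{w}}^\infty(E;L(H))$. For (i) and (ii), passing to the weak limit in the explicit block formula for $zM_n(z)^{-1}$ gives $\Lambda(z) = \left(\begin{smallmatrix} z\alpha(z) & \beta(z) \\ z\gamma(z) & \delta(z)\end{smallmatrix}\right)$ with $\alpha, \beta, \gamma, \delta$ analytic and bounded on $B(0,\eps')$, and $\delta(0)$ invertible on $V^\perp$ (being the weak limit of the uniformly coercive $K_n(0)^{-1}$, for which $\Re K_n(0)^{-1} \geqq c/K^2$ by the same bound). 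Inverting this block form of $\Lambda(z)$ produces $\mu$ as a bounded analytic operator-valued function on $B(0,\eps')$ with $R(\mu(0)) = V$ and coercivity constant $c'>0$ on a (possibly smaller) disc $B(r',r')$. Finally, the $G$-convergence of $\partial_0 M_{n_k}(\partial_0^{-1})$ to $\partial_0\mu(\partial_0^{-1})$ in $H_{\nu,-1}(\R;H)$ for $\nu > 1/(2r')$ follows from Theorem \ref{Th: SolTh} and Lemma \ref{Le: loc_uni_weak}: the solution operator of $\partial_0 M(\partial_0^{-1})$ is the functional-calculus realisation of $z \mapsto zM(z)^{-1}$, and weak $\s H^\infty$ convergence of these symbols upgrades to weak operator convergence of the solution operators, which is exactly $G$-convergence.

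The principal technical obstacle is verifying that $\mu$, defined implicitly via $\mu^{-1} = \Lambda/z$, is a genuine bounded $\s H^\infty$ material law analytic at $0$: the a priori bound $\|\Lambda(z)^{-1}\| \lesssim 1/|z|^2$ gives only the singular estimate $\|\mu(z)\| \lesssim 1/|z|$, so the boundedness of $\mu$ on $B(0,\eps')$ cannot be read off from norm estimates. Removing this singularity, and simultaneously identifying $R(\mu(0))=V$ and the coercivity of $\mu(0)$ on $V$, rests on the precise block structure of $\Lambda$ inherited from $zM_n(z)^{-1}$: the first column of $\Lambda(z)$ vanishes to first order in $z$ at $z=0$, and this vanishing cancels the prefactor $z$ in $\mu(z) = z\Lambda(z)^{-1}$ exactly, yielding a bounded analytic limit. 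This cancellation is dictated by the hypothesis $R(M_n(0)) = V$ and constitutes the combinatorial core of the argument.
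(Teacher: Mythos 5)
Your overall strategy is the same as the paper's: decompose $H = V \oplus V^{\perp}$ with $V := R(M_1(0))$, carry out a Schur-complement/block-inverse analysis of $M_n(z)^{-1}$ near $z=0$, apply the sequential compactness Theorem \ref{Thm: seq_comp} to a normalised family of inverse symbols, and recover the $G$-convergence via Lemma \ref{Le: loc_uni_weak} and Remark \ref{rem:wot}. The only cosmetic difference is that you track $\Lambda(z) = zM_n(z)^{-1}$ on a full disc, while the paper tracks $M_n(z)^{-1}$ on punctured discs via $\sigma$-compactness and identifies its Laurent coefficients; since $\Lambda(z) = z\,\eta(z)$, these are interchangeable.

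There is, however, a genuine gap in your block structure. From $R(M_n(0)) = V$ you correctly conclude that the bottom row of $M_n(0)$ vanishes, and you write $M_n(z) = \bigl(\begin{smallmatrix} M_{n,11}(z) & M_{n,12}(z) \\ z\tilde M_{n,21}(z) & z\tilde M_{n,22}(z)\end{smallmatrix}\bigr)$, leaving $M_{n,12}(0)$ potentially nonzero. But the hypothesis that $M_n$ is a $(c)$-material law forces $M_n(0) = M_n(0)^* \geqq 0$ (this is Proposition \ref{Prop: M0+M1}), and together with $\pi_2 M_n(0) = 0$ this also kills the $(1,2)$-block: $M_n(0)\pi_2^* = 0$ (this is the content of Lemma \ref{Le: Off=0}). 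Without this, your asserted form $\Lambda(z) = \bigl(\begin{smallmatrix} z\alpha(z) & \beta(z) \\ z\gamma(z) & \delta(z)\end{smallmatrix}\bigr)$ carries a nonvanishing $\beta(0) = -\lim M_{n,11}(0)^{-1}M_{n,12}(0)K_n(0)^{-1}$, and then the inversion formula gives $\mu(0) = \bigl(\begin{smallmatrix} a & -a\beta(0)\delta(0)^{-1} \\ 0 & 0\end{smallmatrix}\bigr)$ with $a = (\alpha(0)-\beta(0)\delta(0)^{-1}\gamma(0))^{-1}$. This $\mu(0)$ has a nonzero $(1,2)$-block and a zero $(2,1)$-block, so it cannot be selfadjoint, which contradicts $\mu$ being a $(c')$-material law (again Proposition \ref{Prop: M0+M1}, or a hypothesis of Remark \ref{rem:conv} which you implicitly need to conclude (i)). Put differently: the ``first column of $\Lambda$ vanishes to first order'' claim suffices to remove the singularity in $\mu = z\Lambda^{-1}$, but to make $\mu$ a genuine material law you also need the $(1,2)$-block of $\Lambda$ to vanish at $0$, which you do not establish. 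This is fixable by invoking the selfadjointness of $M_n(0)$; you should also note that the coercivity $\mu \in \s H^{\infty,c'}(B(r',r');L(H))$, which you assert but do not prove, then follows from the converse in Remark \ref{rem:conv} once $\mu(0) = \mu(0)^*$, $\mu(0) \geqq d'$ on $V$, and $\Re\mu'(0) = \Re\delta(0)^{-1} \geqq c''$ on $V^\perp$ are checked.
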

\begin{proof}
 Define the Hilbert spaces $H_1\coloneqq R(M_1(0))$ and $H_2\coloneqq N(M_1(0))$ together with the canonical (orthogonal) projections $\pi_j:H\to H_j$, $j\in\{1,2\}$. Then, for all $n\in\N$ and $j,k\in\{1,2\}$, set $M_{jk,n}(\cdot)\coloneqq  \pi_j M_n(\cdot)\pi_k^*$. Now, the first assertion in Lemma \ref{Le: prototype} ensures the existence of $\eps'>0$ such that, for all $E\subseteqq \C$ relatively compact in $B(0,\eps')\setminus \{0\}$, the sequence $(M_n(\cdot)^{-1})_n$ is bounded in $\s H^{\infty}(E;L(H_1\oplus H_2))$. By $\sigma$-compactness of $B(0,\eps')\setminus \{0\}$ and Theorem \ref{Thm: seq_comp}, we may choose a subsequence $(M_{n_k}(\cdot)^{-1})_k$ of $(M_{n}(\cdot)^{-1})_n$ such that there is a holomorphic mapping $\eta:B(0,\eps')\setminus\{0\}\to L(H)$ with
\[
  M_{n_k}(\cdot)^{-1} \to \eta \in \s H_{\textnormal{w}}^{\infty}(E;L(H)) \quad ( k\to\infty, E\subset\subset B(0,\eps')\setminus\{0\}).
\]
 By Cauchy's integral formulas, we infer that the coefficients of the Laurent series expansions of $M_{n_k}(\cdot)^{-1}$ converge in the weak operator topology $\tau_{\textnormal{w}}$ to the respective ones of $\eta$. Hence, with the help of the first assertion of Lemma \ref{Le: prototype}, the Laurent series expansion of $\eta$ is of the form
\[
   \eta (z) = \begin{pmatrix}
                (\tau_{\textnormal{w}}\text{-})\lim_{k\to\infty} M_{11,n_k}(0)^{-1}+\hat M_{11}(z) & \hat M_{12}(z)\\
                \hat M_{21}(z) & z^{-1} (\tau_{\textnormal{w}}\text{-})\lim_{k\to\infty} M_{22,n_k}(0)^{-1} + \hat M_{22}(z) 
              \end{pmatrix}
\]
for suitable bounded holomorphic operator-valued functions $\hat M_{jk}$ for $j,k\in\{1,2\}$. The second assertion of Lemma \ref{Le: prototype} yields the existence of $\eps''>0$ such that $\mu\coloneqq \eta(\cdot)^{-1} \in \s H^{\infty}(B(0,\eps'');L(H))$. Moreover, from the representation in Lemma \ref{Le: prototype}, we read off that $R(M_1(0))=R(\mu(0))$ and $\mu(0)\geqq d'$ on $H_1$ for some $d'>0$ according to Inequality \eqref{eq:alm_alm_triv} and the fact that positive definiteness is preserved under limits in the weak operator topology. Similarly, $\Re \mu'(0) \geqq c'>0$ on $H_2$. Thus, by Remark \ref{rem:conv} it follows that $\mu$ lies in $\s H^{\infty,c''}(B(r',r');L(H))$ for some $r',c''>0$. It remains to show the $G$-convergence result. To this end let $\nu>1/(2r')$. By the convergence of the coefficients in the Laurent series of $((M_{n_k}(\cdot))^{-1})_k$, we get that $((\cdot)(M_{n_k}(\cdot))^{-1})_k$ converges to $(\cdot)\eta(\cdot)$ in $\s H^\infty_{\textnormal{w}}(B(1/(2\nu),1/(2\nu));L(H))$. Thus, Lemma \ref{Le: loc_uni_weak} implies that $((\partial_tM_{n_k}(\partial_t^{-1}))^{-1})_k$ converges to $\partial_t^{-1}\eta(\partial_t^{-1})$ in the weak operator topology of $L(H_{\nu,-1}(\R;H))$. Employing Remark \ref{rem:wot}, we obtain the desired $G$-convergence. 
\end{proof}

\begin{Sa}\label{Thm:(P2)} Let $H_1,H_2$ be separable Hilbert spaces. Assume that $((M_n)_n,\s A)$ satisfies (P2). Then there exists $\nu_0\geqq 0,\eps',c'>0$ and $(n_k)_k$ a strictly monotone sequence of positive integers
 such that for all $\nu>\nu_0$ the sequence $(\partial_tM_{n_k}(\partial_t^{-1})+\s A)_k$ $G$-converges to $(\partial_t N(\partial_t^{-1})+\s A)$ in $H_{\nu,-1}(\R;H_1\oplus H_2)$ with
\begin{multline*}N(\cdot)\coloneqq \begin{pmatrix} \eta_{1}(\cdot) + \eta_{4}(\cdot)\eta_2(\cdot)^{-1}\eta_{3}(\cdot) & \eta_{4}(\cdot)\eta_2(\cdot)^{-1} 
\\ \eta_2(\cdot)^{-1}\eta_{3}(\cdot) & \eta_2(\cdot)^{-1} \end{pmatrix} \\
\in \s H^{\infty}(B(0,\eps');L(H_1\oplus H_2))\cap \s H^{\infty,c'}(B(1/(2\nu_0),1/(2\nu_0));L(H_1\oplus H_2)),
\end{multline*} where
\begin{align*}
   \eta_1(\cdot) &\coloneqq  \lim_{k\to\infty}M_{11,n_k}(\cdot)- M_{12,n_k}(\cdot)M_{22,n_k}(\cdot)^{-1}M_{21,n_k}(\cdot) \in \s H^{\infty}_\textnormal{w}(B(0,\eps');L(H_1))\\
   \eta_2(\cdot) &\coloneqq  \lim_{k\to\infty}\left(M_{22,n_k}(\cdot)\right)^{-1}  \in \s H^{\infty}_\textnormal{w}(E;L(H_2))\quad (E\subset\subset B(0,\eps')\setminus \{0\}) \\
   \eta_3(\cdot) &\coloneqq  \lim_{k\to\infty}M_{22,n_k}(\cdot)^{-1}M_{21,n_k}(\cdot)  \in \s H^{\infty}_\textnormal{w}(B(0,\eps');L(H_1,H_2))\text{ and } \\
   \eta_4(\cdot) &\coloneqq  \lim_{k\to\infty}M_{12,n_k}(\cdot)M_{22,n_k}(\cdot)^{-1} \in \s H^{\infty}_\textnormal{w}(B(0,\eps');L(H_2,H_1)). 
\end{align*}
Moreover, $R(N(0))=R(M_{1}(0))$.
\end{Sa}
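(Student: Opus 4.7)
The strategy mirrors the heuristic in Section \ref{section: disco}: I would perform Gauss elimination on the $2\times 2$ block system associated with $M_n$, reduce the upper-left block to the case already handled by Theorem \ref{Thm:(P1)}, and reassemble the four Schur-type limits $\eta_1,\ldots,\eta_4$ by means of the weak-strong principle of Theorem \ref{Le: Mat_law_ptwise} and its Corollary \ref{Cor:W-Strong-Product}. First I extract a common subsequence $(n_k)_k$ along which all four objects exist: for $\eta_2$ I apply Theorem \ref{Th: 0-anal-ode} to $(M_{22,n})_n$; hypothesis (ii) of (P2) supplies (via the projections $q_2$) the uniform lower bound $M_{22,n}(0)\geqq d$ on $R(M_{22,1}(0))$ needed by that theorem, so it produces $\eps'>0$, the limit $\eta_2\in\s H^\infty_{\textnormal{w}}(E;L(H_2))$ on every $E\subset\subset B(0,\eps')\setminus\{0\}$, $\eta_2^{-1}\in\s H^\infty(B(0,\eps');L(H_2))$, and (from Lemma \ref{Le: prototype}) a uniform $\s H^\infty$-bound on $(M_{22,n_k}^{-1})_k$ over each such $E$. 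Using this bound, the sequences $(M_{22,n_k}^{-1}M_{21,n_k})_k$, $(M_{12,n_k}M_{22,n_k}^{-1})_k$ and $(M_{11,n_k}-M_{12,n_k}M_{22,n_k}^{-1}M_{21,n_k})_k$ are bounded in the pertinent Hardy spaces, so Theorem \ref{Thm: seq_comp} applied diagonally yields the remaining three limits $\eta_3,\eta_4,\eta_1$ in $\tau_{\s M}$.

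Denote the Schur complement by $S_n := M_{11,n}-M_{12,n}M_{22,n}^{-1}M_{21,n}$. On $B(r,r)$ the positivity $\Re z^{-1}M_n(z)\geqq c$ transfers in the standard block way to $\Re z^{-1}S_n(z)\geqq c$, and the uniform bound on $(M_{22,n_k}^{-1})_k$ furnishes $\s H^\infty$-boundedness of $S_{n_k}$, so the pair $((S_{n_k})_k,A)$ satisfies (P1). Theorem \ref{Thm:(P1)} then delivers strong $G$-convergence of $(\partial_0 S_{n_k}(\partial_0^{-1})+A)_k$ to $\partial_0\eta_1(\partial_0^{-1})+A$ in $H_{\nu,-1}(\R;H_1)$ and, crucially, the pointwise strong convergence $\partial_0^{-3}u_{n_k}(t)\to\partial_0^{-3}u(t)$ in $H_1$ for every $t\in\R$, where $(u_{n_k},v_{n_k})$ denotes the solutions. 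Reading off the bottom row of the untransformed system gives
\[
   v_{n_k}= M_{22,n_k}(\partial_0^{-1})^{-1}\partial_0^{-1}f_2 - M_{22,n_k}(\partial_0^{-1})^{-1}M_{21,n_k}(\partial_0^{-1})u_{n_k};
\]
Lemma \ref{Le: loc_uni_weak} handles the first summand, while the pointwise strong convergence of $\partial_0^{-3}u_{n_k}$ is exactly the hypothesis required by Corollary \ref{Cor:W-Strong-Product} (with $l=3$) applied to the convergent material law $M_{22,n_k}^{-1}M_{21,n_k}\to\eta_3$, so $v_{n_k}\rightharpoonup \eta_2(\partial_0^{-1})\partial_0^{-1}f_2-\eta_3(\partial_0^{-1})u$. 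Substituting this weak limit back into the top row and invoking Corollary \ref{Cor:W-Strong-Product} a second time to treat the term $M_{12,n_k}(\partial_0^{-1})M_{22,n_k}(\partial_0^{-1})^{-1}v_{n_k}$ assembles precisely the matrix material law $N$ stated.

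The main obstacle I expect is not these weak-convergence passages but verifying that $N$ is itself a $(c')$-material law, i.e.\ that $N\in \s H^\infty(B(0,\eps');L(H_1\oplus H_2))\cap \s H^{\infty,c'}(B(1/(2\nu_0),1/(2\nu_0));L(H_1\oplus H_2))$, so that Theorem \ref{Th: SolTh} can be applied to the limiting equation and well-posedness plus causality are retained. This requires transferring positivity through weak-operator limits of nonlinear Schur combinations, and it is where the auxiliary Lemma \ref{Le: prototype} and Theorem \ref{Th: 0-anal-ode}, together with the range hypothesis (i) and the symmetry hypothesis (ii) of (P2), have to carry the burden. The identity $R(N(0))=R(M_1(0))$ then follows by tracking ranges through the Schur factorisation and combining with those same two hypotheses.
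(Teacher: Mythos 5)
Your overall architecture is essentially the paper's: reduce via Schur/Gauss elimination, feed the Schur complement $S_n = M_{11,n}-M_{12,n}M_{22,n}^{-1}M_{21,n}$ into Theorem~\ref{Thm:(P1)}, apply Theorem~\ref{Th: 0-anal-ode} to $(M_{22,n})_n$, extract common subsequences, exploit the pointwise strong convergence of $\partial_0^{-3}u_{1,n_k}$ through Corollary~\ref{Cor:W-Strong-Product}, and reassemble. Up to the derivation of the \emph{triangular} limit system, your outline matches. But the reassembly step has a genuine gap. You propose to ``invoke Corollary~\ref{Cor:W-Strong-Product} a second time'' on $M_{12,n_k}(\partial_0^{-1})M_{22,n_k}(\partial_0^{-1})^{-1}v_{n_k}$. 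That corollary requires pointwise \emph{strong} convergence of $\partial_0^{-l}v_{n_k}(t)$, and you only have weak pointwise convergence of $v_{n_k}$ (it comes out of Corollary~\ref{Cor:W-Strong-Product} itself, which yields weak, not strong, limits). So the second invocation is unjustified. The paper needs no second limit passage: once the triangular limit system in $(v_1,v_2)$ is in hand, $N$ is recovered by a purely algebraic Gauss step, namely left-multiplying by $\left(\begin{smallmatrix} 1 & \partial_0\eta_4(\partial_0^{-1})\eta_2(\partial_0^{-1})^{-1} \\ 0 & \eta_2(\partial_0^{-1})^{-1}\partial_0\end{smallmatrix}\right)$, which involves only the already-computed limit objects $\eta_2,\eta_4$.

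A second issue is your claim that $\Re z^{-1}M_n(z)\geqq c$ ``transfers in the standard block way'' to $\Re z^{-1}S_n(z)\geqq c$. The similarity $\left(\begin{smallmatrix}1 & -M_{12,n}M_{22,n}^{-1}\\0&1\end{smallmatrix}\right)M_n\left(\begin{smallmatrix}1 & 0\\-M_{22,n}^{-1}M_{21,n}&1\end{smallmatrix}\right)$ is not generally of the form $T^*M_nT$ because the two triangular factors are not mutual adjoints (they would be only if $M_n$ were selfadjoint). This is precisely why the compatibility condition (P2)(ii) exists and why the paper routes the positivity argument through Theorem~\ref{Th:final} (in combination with Lemma~\ref{Le:sim_gen_hsf}, Remark~\ref{Le:asymp_exp_gen_hsf}, and Lemma~\ref{Le: prototype}), a theorem you never explicitly invoke even though it is the load-bearing ingredient both for the uniform $\s H^\infty$-bounds (hence the uniform $\eps'$) and for showing the transformed material laws remain $(c')$-material laws with unchanged range. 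You correctly identify this as ``the main obstacle'' but then leave it as an obstacle rather than addressing it; the paper disposes of it at the very first line of the proof by citing Theorem~\ref{Th:final}. (Minor: the uniform lower bound $M_{22,n}(0)\geqq d$ on $R(M_{22,1}(0))$ comes from hypothesis (i), the range/positivity condition, via Theorem~\ref{Sa:2times2yields4times4} and Theorem~\ref{Th:final} --- not from the compatibility condition (ii) as you state.)
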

\begin{proof} By Theorem \ref{Th:final} (applied to $M=M_n$ and the sequence $N$ just the constant sequence consisting of $M_n$ as every entry) there exist $\eps',r',c'>0$ such that, for all $n\in\N$, 
\begin{multline*}
   \begin{pmatrix}1 & -M_{12,n}(\cdot)M_{22,n}(\cdot)^{-1}\\ 0&1    
   \end{pmatrix}\begin{pmatrix}M_{11,n}(\cdot)&M_{12,n}(\cdot)\\M_{21,n}(\cdot)&M_{22,n}(\cdot)    
   \end{pmatrix}\begin{pmatrix}1 & 0\\-M_{22,n}(\cdot)^{-1}M_{21,n}(\cdot)&1    
   \end{pmatrix}\\
=\begin{pmatrix} M_{11,n}(\cdot)- M_{12,n}(\cdot)M_{22,n}(\cdot)^{-1}M_{21,n}(\cdot) & 0 \\ 0 & M_{22,n}(\cdot)\end{pmatrix}\\
   \in \s H^\infty(B(0,\eps');L(H_1\oplus H_2))\cap \s H^{\infty,c'}(B(r',r');L(H_1\oplus H_2)).
\end{multline*}
Let $\nu>1/(2r')$. By Theorem \ref{Thm: seq_comp} and Theorem \ref{Th: 0-anal-ode}, we may choose convergent subsequences of the material law sequences
\begin{align*}
   (\mu_{1,n})_n &\coloneqq  (M_{11,n}(\cdot)- M_{12,n}(\cdot)M_{22,n}(\cdot)^{-1}M_{21,n}(\cdot))_n \\
   (\mu_{2,n})_n &\coloneqq  (M_{22,n}(\cdot)^{-1})_n \\
   (\mu_{3,n})_n &\coloneqq  (M_{22,n}(\cdot)^{-1}M_{21,n}(\cdot))_n \text{ and } \\
   (\mu_{4,n})_n &\coloneqq  (M_{12,n}(\cdot)M_{22,n}(\cdot)^{-1})_n.
\end{align*}
We will use the same index for the subsequences and denote the respective limits by $\eta_1,\eta_2,\eta_3$ and $\eta_4$. Using the representation from Theorem \ref{Sa:2times2yields4times4}, we get with the help of Theorem \ref{Th:final}: 
\begin{align*}
(G_1 \oplus \{0\}) \oplus ( G_3 \oplus \{0\}) & = R\left( \begin{pmatrix} \begin{pmatrix}M_{11,n}^{(0)} & 0\\ 0& 0\end{pmatrix}  &
\begin{pmatrix} M_{13,n}^{(0)} & 0 \\  0         & 0 \end{pmatrix} \\
\begin{pmatrix}    M_{31,n}^{(0)} & 0 \\ 0&0\end{pmatrix} & \begin{pmatrix} M_{33,n}^{(0)} & 0\\   0         & 0 \end{pmatrix} \end{pmatrix}\right)\\
& =R\left(\begin{pmatrix}M_{11,n}(0)&M_{12,n}(0)\\M_{21,n}(0)&M_{22,n}(0)    
   \end{pmatrix}\right)\\
& = R\left( \begin{pmatrix} M_{11,n}(0)- M_{12,n}(0)M_{22,n}(0)^{-1}M_{21,n}(0) & 0 \\ 0 & M_{22,n}(0)\end{pmatrix}\right)\\
& = R\left( M_{11,n}(0)- M_{12,n}(0)M_{22,n}(0)^{-1}M_{21,n}(0)\right)\oplus R\left(M_{22,n}(0)\right).
\end{align*}
Now, $M_{11,n}(0)- M_{12,n}(0)M_{22,n}(0)^{-1}M_{21,n}(0)$ is strictly positive on $G_1=R(M_{11,1}(0))$ and $M_{22,n}(0)$ is strictly positive on $G_3=R(M_{22,1}(0))$ uniformly in $n$. Hence, we deduce that $R(\eta_{1}(0))=R(M_{11,1}(0))$ and, from Theorem \ref{Th: 0-anal-ode}, that $R(\eta_{2}(\cdot)^{-1}(0))=R(M_{22,1}(0))$. Let $(f_1,f_2)\in H_{\nu,-1}(\R;H_1\oplus H_2)$ and for $n\in\N$, let $(u_{1,n},u_{2,n}) \in H_{\nu,-1}(\R;H_1\oplus H_2)$ be the unique solution of
\[
   \partial_t \begin{pmatrix} M_{11,n}(\partial_t^{-1}) & M_{12,n}(\partial_t^{-1}) \\ M_{21,n}(\partial_t^{-1}) & M_{22,n}(\partial_t^{-1}) \end{pmatrix}\begin{pmatrix} u_{1,n} \\ u_{2,n} \end{pmatrix}  + 
  \begin{pmatrix} A & 0 \\ 0 & 0 \end{pmatrix} \begin{pmatrix} u_{1,n} \\ u_{2,n} \end{pmatrix} = \begin{pmatrix} f_1 \\ f_2 \end{pmatrix}. 
\]
Multiplying this equation by $\begin{pmatrix} 1 & -M_{12,n}(\partial_t^{-1})M_{22,n}(\partial_t^{-1})^{-1} \\ 0 & (\partial_tM_{22,n}(\partial_t^{-1}))^{-1} \end{pmatrix}$, we obtain
\[
    \begin{pmatrix} \partial_t\mu_{1,n}(\partial_t^{-1}) & 0 \\ \mu_{3,n}(\partial_t^{-1}) & 1 \end{pmatrix}\begin{pmatrix} u_{1,n} \\ u_{2,n} \end{pmatrix}  + 
  \begin{pmatrix} A & 0 \\ 0 & 0 \end{pmatrix} \begin{pmatrix} u_{1,n} \\ u_{2,n} \end{pmatrix} = \begin{pmatrix} f_1-\mu_{4,n}(\partial_t^{-1})f_2 \\ \mu_{2,n}(\partial_t^{-1})\partial_t^{-1} f_2 \end{pmatrix}.
\]
Thus,
\[
   \begin{pmatrix}
    u_{1,n} \\ u_{2,n} 
   \end{pmatrix} = \begin{pmatrix} (\partial_t\mu_{1,n}(\partial_t^{-1})+A)^{-1}(f_1-\mu_{4,n}(\partial_t^{-1})f_2) \\
                                   - \mu_{3,n}(\partial_t^{-1})u_{1,n}+\mu_{2,n}(\partial_t^{-1})\partial_t^{-1} f_2\end{pmatrix}.
\]
Lemma \ref{Le: loc_uni_weak} ensures that $(\mu_{4,n}(\partial_t^{-1})f_2)_n$ weakly converges to $\eta_4(\partial_t^{-1})f_2$. Thus, by 
 Theorem \ref{Thm:(P1)}, we deduce that $(u_{1,n})_n$ weakly converges to $(\partial_t\eta_1(\partial_t^{-1})+A)^{-1}(f_1-\eta_4(\partial_t^{-1})f_2)=:v_1$. 
Moreover, $(\partial_t^{-3}u_{1,n})_n$ converges pointwise to $\partial_t^{-3}v_1$. Using the equality 
\[
  u_{2,n}=-\mu_{3,n}(\partial_t^{-1}) u_{1,n} +\mu_{2,n}(\partial_t^{-1})\partial_t^{-1}f_2\in H_{\nu,-1}(\R;H_2), 
\]
 we deduce, with the help of Corollary \ref{Cor:W-Strong-Product} for the first term on the right-hand side and Theorem \ref{Th: 0-anal-ode} for the second term, that
\[
   u_{2,n} \rightharpoonup v_2\coloneqq -\eta_{3}(\partial_t^{-1}) v_{1} +\eta_{2}(\partial_t^{-1})\partial_t^{-1}f_2 \in H_{\nu,-1}(\R;H_2)
\]
as $n\to\infty$. We arrive at the limit system
\[
    \begin{pmatrix} \partial_t\eta_{1}(\partial_t^{-1}) & 0 \\ \eta_{3}(\partial_t^{-1}) & 1 \end{pmatrix}\begin{pmatrix} v_{1} \\ v_{2} \end{pmatrix}  + 
  \begin{pmatrix} A & 0 \\ 0 & 0 \end{pmatrix} \begin{pmatrix} v_{1} \\ v_{2} \end{pmatrix} = \begin{pmatrix} f_1-\eta_{4}(\partial_t^{-1})f_2 \\ \eta_{2}(\partial_t^{-1})\partial_t^{-1}f_2 \end{pmatrix}.
\]
Multiplying this equation by $\begin{pmatrix} 1 & \partial_t \eta_{4}(\partial_t^{-1})\eta_2(\partial_t^{-1})^{-1} \\ 0 & \eta_2(\partial_t^{-1})^{-1}\partial_t \end{pmatrix}$, we obtain
\begin{multline*}
    \partial_t \begin{pmatrix} \eta_{1}(\partial_t^{-1}) + \eta_{4}(\partial_t^{-1})\eta_2(\partial_t^{-1})^{-1}\eta_{3}(\partial_t^{-1}) & \eta_{4}(\partial_t^{-1})\eta_2(\partial_t^{-1})^{-1} 
\\ \eta_2(\partial_t^{-1})^{-1}\eta_{3}(\partial_t^{-1}) & \eta_2(\partial_t^{-1})^{-1} \end{pmatrix}\begin{pmatrix} v_{1} \\ v_{2} \end{pmatrix}  +\\ 
  \begin{pmatrix} A & 0 \\ 0 & 0 \end{pmatrix} \begin{pmatrix} v_{1} \\ v_{2} \end{pmatrix} =  \begin{pmatrix} f_1\\ f_2 \end{pmatrix}. 
\end{multline*}
Next, we consider the operator 
\[
   N(\cdot) = \begin{pmatrix} \eta_{1}(\cdot) + \eta_{4}(\cdot)\eta_2(\cdot)^{-1}\eta_{3}(\cdot) & \eta_{4}(\cdot)\eta_2(\cdot)^{-1} 
\\ \eta_2(\cdot)^{-1}\eta_{3}(\cdot) & \eta_2(\cdot)^{-1} \end{pmatrix} = \begin{pmatrix} 1 & \eta_4(\cdot) \\ 0 & 1 \end{pmatrix} 
\begin{pmatrix} \eta_1(\cdot) & 0 \\ 0 & \eta_2(\cdot)^{-1} \end{pmatrix} \begin{pmatrix} 1 & 0 \\ \eta_3(\cdot) & 1 \end{pmatrix}.
\]
By Theorem \ref{Th: 0-anal-ode}, we deduce that $\eta_2(\cdot)^{-1}$ is a $(c'')$-material law with strictly positive zeroth order term on the range of $M_{22,1}(0)$ for some $c''>0$. Moreover, $\eta_1$ is a $(c')$-material law by Theorem \ref{Thm:(P1)}. Hence, using Theorem \ref{Th:final}, we deduce the existence of $\eps'',r'',c'''>0$ such that
$N \in \s H^{\infty,c'''}(B(r'',r'');L(H_1\oplus H_2))\cap \s H^{\infty}(B(0,\eps'');L(H_1\oplus H_2))$. 
\end{proof}
\begin{rems}
  In Theorem \ref{Thm:(P2)}, we have proved that $0$-analytic material laws lead to $0$-analytic material laws after the homogenization process. Hence, it cannot be expected that the homogenized material law contains fractional derivatives with respect to time or explicit delay terms: Indeed, these operators cannot be represented as material laws, which are analytic in $0$, see e.g.~\cite[pp. 448 (a),(c)]{Picard} or \cite{Drrerfrac}. By Theorem \ref{Th: SolTh} we see that the limit equation is also well-posed and causal. The assertion concerning the range of the material law $N$ may be interpreted as ``the main physical phenomenon remains unchanged under the homogenization process'': A clarification of the latter statement is in order. One difference between the wave equation and the heat equation written as a first order system as in \cite[Example 1.4.6]{Waurick2011} or \cite[Example 3.2]{Waurick2010} is the range of the zeroth order term in the material law. More precisely, let $\Omega\subseteqq \mathbb{R}^N$ open, $\kappa\in L^\infty(\Omega)^{N\times N}$ such that $\kappa^{-1}\in L^\infty(\Omega)^{N\times N}$. For smooth and compactly supported $f$ and $g$ we shall rewrite the wave equation 
  \[
      \partial_t^2u - \diverg \kappa \interior{\grad} u = f
  \]
  and the heat equation
  \[
    \partial_t \theta - \diverg \kappa \interior{\grad} \theta = g
  \]
  as first order systems. Setting $v\coloneqq \partial_t u, w\coloneqq -\kappa \interior\grad u$; $q\coloneqq -\kappa \interior\grad\theta$, we get
  \[
     \Big(\partial_t \begin{pmatrix} 1 & 0 \\ 0 & \kappa^{-1} \end{pmatrix} + \begin{pmatrix} 0 & \diverg \\  \interior{\grad} &0 \end{pmatrix}\Big)\begin{pmatrix} v \\ w \end{pmatrix} =\begin{pmatrix} f \\ 0 \end{pmatrix}
  \]
  and
  \[
     \Big(\partial_t \begin{pmatrix} 1 & 0 \\ 0 & 0 \end{pmatrix}+\begin{pmatrix} 0 & 0 \\ 0 & \kappa^{-1} \end{pmatrix} + \begin{pmatrix} 0 & \diverg \\  \interior{\grad} &0 \end{pmatrix}\Big)\begin{pmatrix} \theta \\ q \end{pmatrix} =\begin{pmatrix} g \\ 0 \end{pmatrix},
  \]
  respectively. Therefore, the corresponding material laws read
  \[
     M_{\text{wave}}(\partial_t^{-1}) = \begin{pmatrix} 1 & 0 \\ 0 & \kappa^{-1} \end{pmatrix}
  \]
  and
  \[
     M_{\text{heat}}(\partial_t^{-1}) = \begin{pmatrix} 1 & 0 \\ 0 & 0 \end{pmatrix}+ \partial_t^{-1}\begin{pmatrix} 0 & 0 \\ 0 & \kappa^{-1} \end{pmatrix}.
  \]
  So, $M_{\text{wave}}(0)$ is onto. But the range of $M_{\text{heat}}(0)$ coincides with $L^2(\Omega)\oplus \{0\}\subseteqq L^2(\Omega)\oplus L^2(\Omega)^N$. According to Theorem \ref{Thm:(P2)}, this property remains unchanged due to the homogenization process.
\end{rems}

\begin{Fo}\label{Co: Askew} Let $H$ be a separable Hilbert space, $\eps,c,r>0$, $A:D(A)\subseteqq H\to H$ skew-selfadjoint. Denote by $P: H\to N(A)^\bot$, $Q:H\to N(A)$ the orthogonal projections onto the 
respective spaces $N(A)^\bot$ and $N(A)$. Assume that the operator $A$ has the \emph{$(NC)$-property}, that is, $(D(PAP^*),\abs{\cdot}_{PAP^*})\hookrightarrow (H,\abs{\cdot}_H)$ is compact. Let $(M_n)_n$ be a bounded sequence in $\s H^\infty(B(0,\eps);L(H))\cap \s H^{\infty,c}(B(r,r);L(H))$ with $M_n(0)\geqq c$ on $R(M_n(0))=R(M_1(0))$ for all $n\in\N$. Denote by $q_2: H\to N(M_1(0))\cap N(A)^\bot$, $q_4: H\to N(M_1(0))\cap N(A)$ the canonical orthogonal projections and assume 
\begin{equation}\label{eq:comp_cond}
  q_2 M_n'(0)q_4^*(q_4  M_n'(0)q_4^*)^{-1}=q_2 M_n'(0)^*q_4^*(q_4 M_n'(0)^*q_4^*)^{-1} \text{ for all } n\in\N. 
\end{equation} 
 Then there exists $\nu_0\geqq 0,\eps',c'>0$ and $(n_k)_k$ a strictly monotone sequence of positive integers
 such that for all $\nu>\nu_0$, the sequence 
\[
  (\partial_t M_{n_k}(\partial_t^{-1})+A)_k 
\]
 $G$-converges to 
\begin{multline*}
   \partial_t \left(P^*\eta_{1}(\partial_t^{-1})P + P^*\eta_{4}(\partial_t^{-1})\eta_2(\partial_t^{-1})^{-1}\eta_{3}(\partial_t^{-1})P +P^*\eta_{4}(\partial_t^{-1})\eta_2(\partial_t^{-1})^{-1}Q \right.\\ 
\left.+ Q^*\eta_2(\partial_t^{-1})^{-1}\eta_{3}(\partial_t^{-1})P + Q^*\eta_2(\partial_t^{-1})^{-1}Q \right)+ A  
\end{multline*}
in $H_{\nu,-1}(\R;H)$, where\footnote{The limits are computed in the way similar to Theorem \ref{Thm:(P2)} with $H_1=N(A)^\bot$ and $H_2=N(A)$.}
\begin{align*}
   \eta_1(\cdot) &\coloneqq  \lim_{k\to\infty}PM_{n_k}(\cdot)P^*- PM_{n_k}(\cdot)Q^*(QM_{n_k}(\cdot)Q^*)^{-1}QM_{n_k}(\cdot)P^* \\
   \eta_2(\cdot) &\coloneqq  \lim_{k\to\infty}(QM_{n_k}(\cdot)Q^*)^{-1} \\
   \eta_3(\cdot) &\coloneqq  \lim_{k\to\infty}(QM_{n_k}(\cdot)Q^*)^{-1}QM_{n_k}(\cdot)P^* \text{ and } \\
   \eta_4(\cdot) &\coloneqq  \lim_{k\to\infty}(PM_{n_k}(\cdot)Q^*)(QM_{n_k}(\cdot)Q^*)^{-1}. 
\end{align*}
\end{Fo}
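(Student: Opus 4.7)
The strategy is to reduce the claim to Theorem \ref{Thm:(P2)} by decomposing the ambient Hilbert space along the nullspace of $A$. Identify $H$ unitarily with $H_1 \oplus H_2$, where $H_1 := N(A)^\bot$ and $H_2 := N(A)$, via $x \mapsto (Px, Qx)$. Since $A$ is skew-selfadjoint, $N(A)$ and $N(A)^\bot$ are reducing subspaces, so under this identification $A$ corresponds to the block operator $\mathcal{A} = \begin{pmatrix} PAP^* & 0 \\ 0 & 0 \end{pmatrix}$ with $PAP^*$ skew-selfadjoint and one-to-one on $H_1$. Each material law transforms into the block
\[
\widetilde M_n(\cdot) := \begin{pmatrix} PM_n(\cdot)P^* & PM_n(\cdot)Q^* \\ QM_n(\cdot)P^* & QM_n(\cdot)Q^* \end{pmatrix},
\]
and boundedness in $\s H^\infty(B(0,\eps);L(H_1\oplus H_2))\cap \s H^{\infty,c}(B(r,r);L(H_1\oplus H_2))$ is preserved under this unitary change of coordinates.

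Next I would verify that $((\widetilde M_n)_n, \mathcal{A})$ satisfies (P2). That $((PM_nP^*)_n, PAP^*)$ satisfies (P1) is immediate: the $(c)$-material-law property and the $\s H^\infty$-bound transfer from $(M_n)_n$ to $(PM_nP^*)_n$ by sandwiching with $P$ and $P^*$, and the compact embedding $(D(PAP^*),\abs{\cdot}_{PAP^*})\hookrightarrow (H_1,\abs{\cdot}_{H_1})$ is precisely the $(NC)$-property. Condition (i) of (P2), i.e.\ the equality $R(\widetilde M_1(0)) = R(\widetilde M_n(0))$ together with $\widetilde M_n(0)\geqq c$ on this range, is inherited from the corresponding hypothesis on $(M_n)_n$ by unitary equivalence. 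For condition (ii), note that the projections $q_1,q_2$ appearing in (P2), once $R(\pi_j^*)$ is identified with $H_j$, become the projections $N(A)^\bot \to N(M_1(0))\cap N(A)^\bot$ and $N(A)\to N(M_1(0))\cap N(A)$, which are exactly the restrictions of the projections $q_2,q_4$ of the corollary to $N(A)^\bot$ and $N(A)$. Expanding the $(i,j)$-entries as $M_{11,n}'(0)=PM_n'(0)P^*$ etc., taking the adjoint of both sides of the identity in (P2), and collecting terms transforms that identity line-by-line into the hypothesis \eqref{eq:comp_cond}.

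With (P2) verified, Theorem \ref{Thm:(P2)} yields $\nu_0\geqq 0$, $\eps', c'>0$ and a strictly monotone sequence $(n_k)_k$ such that, for $\nu > \nu_0$, the operators $(\partial_0 \widetilde M_{n_k}(\partial_0^{-1}) + \mathcal{A})_k$ $G$-converge in $H_{\nu,-1}(\R;H_1\oplus H_2)$ to $\partial_0 \widetilde N(\partial_0^{-1}) + \mathcal{A}$, where $\widetilde N$ is the $2\times 2$ block matrix prescribed there with the four limits $\eta_1,\ldots,\eta_4$ reading exactly as in the corollary after substituting $M_{11,n}=PM_nP^*$, $M_{12,n}=PM_nQ^*$, $M_{21,n}=QM_nP^*$, $M_{22,n}=QM_nQ^*$. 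Conjugating back to $H$ via $(x_1,x_2)\mapsto P^*x_1 + Q^*x_2$ expands $\widetilde N$ into the five-term sum
\[
   P^*\eta_1 P + P^*\eta_4 \eta_2^{-1}\eta_3 P + P^*\eta_4 \eta_2^{-1} Q + Q^*\eta_2^{-1}\eta_3 P + Q^*\eta_2^{-1} Q
\]
displayed in the statement, and the $G$-convergence transfers to $H_{\nu,-1}(\R;H)$.

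The main obstacle is the correct translation of condition (ii) of (P2) into the symmetry assumption \eqref{eq:comp_cond}: one has to identify the canonical projections in each formulation and recognise that an adjoint is implicit in the rearrangement of the two identities. Everything else amounts to bookkeeping under the unitary block decomposition $H = P^*H_1 \oplus Q^*H_2$ and an appeal to the already established Theorem \ref{Thm:(P2)}.
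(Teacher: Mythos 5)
Your proposal is correct and takes exactly the route the paper does: the published proof is a one-liner applying Theorem \ref{Thm:(P2)} to $((\begin{smallmatrix}PM_n P^* & PM_nQ^*\\ QM_nP^* & QM_nQ^*\end{smallmatrix})_n, (\begin{smallmatrix}PAP^* & 0\\0&0\end{smallmatrix}))$. You have merely made explicit the bookkeeping that the paper leaves implicit, including the correct identification of the (P2) projections $q_1,q_2$ with the corollary's $q_2,q_4$ and the adjoint-rearrangement that turns condition (ii) of (P2) into \eqref{eq:comp_cond}.
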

\begin{proof}
  The assertion follows by applying Theorem \ref{Thm:(P2)} to 
\[
   ((M_n)_n,\s A)=
 \left( \left(\begin{pmatrix} PM_n(\cdot)P^* & PM_n(\cdot)Q^* \\ QM_n(\cdot)P^* & QM_n(\cdot)Q^*\end{pmatrix}\right)_n, \begin{pmatrix} PAP^* & 0 \\ 0 & 0 \end{pmatrix}\right).\qedhere
\]
\end{proof}
\begin{rems} The compatibility condition \eqref{eq:comp_cond} may be hard to check in applications. However, there are some situations in which the Condition \eqref{eq:comp_cond} is trivially satisfied:
\begin{itemize}
 \item $A$ is one-to-one; then $N(A)=\{0\}$ and $q_4=0$.
 \item $R(M_1(0))\supseteqq N(A)^{\bot}$; then $N(M_1(0))\cap N(A)^\bot=\{0\}$ and $q_2=0$.
 \item $M_1(0)$ is onto; then the preceding condition is satisfied. We remark here that this condition was imposed in \cite[Theorem 2.3.14]{Waurick2011}. This condition corresponds to hyperbolic-type equations in applications.
 \item $M_n'(0)=M_n'(0)^*$; then $q_2 M_n'(0)q_4^*(q_4  M_n'(0)q_4^*)^{-1}=q_2 M_n'(0)^*q_4^*(q_4 M_n'(0)^*q_4^*)^{-1}$.
\end{itemize}
\end{rems}
We do not yet know whether the compatibility condition is optimal. We can however give some examples to show that the other assumptions in $(P2)$ are reasonable. The following example shows that without the requirement on $A$ to have the $(NC)$-property the limit equation can differ from the expressions given in Theorem \ref{Thm:(P2)} or Corollary \ref{Co: Askew}.
\begin{Bei}[Compactness assumption does not hold]\label{Ex: Count} Let $\nu,\eps>0$. Consider the mapping $a:\R \to\R$ given by
\[
   a(x) \coloneqq  \1_{[0,\frac12)}(x-k)+2\1_{[\frac12,1]}(x-k)
\]
for all $x\in  [k,k+1)$, where $k\in\Z$. By $a(n\cdot \hat m)\phi\coloneqq (x\mapsto a(nx)\phi(x))$ for $n\in\N$, we define the corresponding multiplication operator in $L_2(\R)$. Note that $a(x+k)=a(x)$ for all $x\in\R$ and $k\in\Z$.

Let $f\in H_{\nu,0}(\R; L_2(\R))$. We consider the evolutionary equation with $(M_n(\partial_t^{-1}))_n\coloneqq (\partial_t^{-1}a(n\cdot \hat m))_n$ and $A=i: L_2(\R)\to L_2(\R): \phi\mapsto i\phi$. Clearly, $N(A)=\{0\}$. By \cite[Theorem 4.5]{Waurick} or \cite[Theorem 1.5]{Dac}, we deduce that 
\[
   M_n\to \left(z\mapsto z\int_0^1 a(x)\dd x\right)=\left(z\mapsto \frac32 z\right) \in \s H_{\textnormal{w}}^\infty(B(0,\eps);L(L_2(\R)))
\]
as $n\to\infty$. If the assertion of Corollary \ref{Co: Askew} remains true in this case, then $(\partial_tM_n(\partial_t^{-1})+A)_n$ $G$-converges to $\frac32 + i$. For $n\in\N$, let $u_n\in H_{\nu,0}(\R;L_2(\R))$ be the unique solution of the equation
\begin{equation}\label{a_n}
  (\partial_tM_n(\partial_t^{-1})+A)u_n=(a(n\cdot\hat m )+i)u_n=f.
\end{equation}
Observe that by \cite[Theorem 1.5]{Dac}
\[
  u_n=(a(n\hat m)+i)^{-1}f\rightharpoonup \left(\int_0^1(a(x)+i)^{-1}\dd x\right) f=:u.
\]
as $n\to\infty$. We integrate
\[
  \int_0^1(a(x)+i)^{-1}\dd x =\frac12(1+i)^{-1}+\frac12(2+i)^{-1}.
\]
Inverting the latter equation yields
\[
  \left(\int_0^1(a(x)+i)^{-1}\dd x\right)^{-1} =\left(\frac12(1+i)^{-1}+\frac12(2+i)^{-1}\right)^{-1}=\frac{18}{13}+\frac{14}{13}i.
\]
Hence, $u$ satisfies
\[
  \left(\frac32 + i\right)u=f \text{ and }\left(\frac{18}{13}+\frac{14}{13}i\right)u=f,
\]
which of course is a contradiction.
\end{Bei}
In the next example, the uniform positive definiteness is violated.
\begin{Bei}[Uniform positive definiteness condition does not hold] Let $H=\C$, $\nu>0$ and, for $n\in\N$, let $M_n(\partial_t^{-1})=\partial_t^{-1}\frac{1}{n}$, $A=0$, $f\in H_{\nu,0}(\R)\setminus\{0\}$. For $n\in\N$, let  $u_n\in H_{\nu,0}(\R)$ be defined by
\[
 \partial_t M_n(\partial_t^{-1})u_n = \frac{1}{n}u_n = f.
\]
 Then $(u_n)_n$ is not relatively weakly compact and contains no weakly convergent subsequence.
\end{Bei}
In the final example, the range condition in $(P2)$ is violated. 
\begin{Bei}[Range condition does not hold] Let $H$ be an infinite-dimensional, separable Hilbert space. Let $(\phi_n)_n$ be a complete orthonormal system. For $n\in\N$ define $M_n(\partial_t^{-1})\coloneqq  \langle \phi_n,\cdot\rangle\phi_n+\partial_t^{-1}(1-\langle\phi_n,\cdot\rangle\phi_n)$. For the sequence $(M_n)_n$ the range condition in $(P2)$ (applied with $A=0$) is violated. Let $f\in H_{\nu,0}(\R;H)$, $\nu>0$. For $n\in\N$, let $u_n\in H_{\nu,0}(\R;H)$ be such that
 \[
    \partial_t M_n(\partial_t^{-1})u_n = \partial_t \langle \phi_n,u_n\rangle\phi_n+u_n - \langle \phi_n,u_n\rangle \phi_n =f.
 \]
It is easy to see that $(u_n)_n$ is bounded. Take the inner product of the last equation with $\phi_m$ for some $m\in \N$. If $n\in\N$ is larger than $m$ we arrive at
\[
   \langle u_n, \phi_m \rangle = \langle f,\phi_m\rangle,
\]
and we deduce that $(\partial_t M_n(\partial_t^{-1}))_n$ $G$-converges to $\partial_t\partial_t^{-1}=1$. This, however, does not yield a differential equation.
\end{Bei}

\section{Applications}\label{section: Ex}

We demonstrate the applicability of our main theorem to the mathematical models of some physical phenomena. For notational details, we refer to \cite[pp. 34 and p. 98]{Waurick2011} or to \cite[3.2 Examples]{Trostorff2011a}.

\subsection*{Thermodynamics} Let $\alpha,\beta\in \R$, $0<\alpha<\beta$, $\Omega\subseteqq \R^N$ open and bounded, $N\in\N$. Recall \cite[Definition 4.11]{CioDon}:
  \begin{multline*}
  M(\alpha,\beta,\Omega)\coloneqq  \big\{ \kappa \in L_\infty (\Omega)^{N\times N};\\ \Re \langle \kappa(x)\xi,\xi\rangle \geqq \alpha \abs{\xi}^2, \abs{\kappa(x)\xi}\leqq \beta\abs{\xi}, \xi\in \R^N, \text{ a.e. }x\in \Omega \big\}. \end{multline*}
Let $M_\text{self}(\alpha,\beta,\Omega)\coloneqq \{ \kappa\in M(\alpha,\beta,\Omega); \kappa \text{ selfadjoint a.e.}\}$. For $\kappa\in M(\alpha,\beta,\Omega)$ denote by $\kappa(\hat m)$ the  associated multiplication operator in $L_2(\Omega)^N$. Let $(\kappa_n)_n$ be a sequence in $M_\text{self}(\alpha,\beta,\Omega)$. Recall from Section \ref{section: disco} that a first order formulation of the heat equation with Dirichlet boundary conditions in the context of evolutionary equations introduced in \cite{PicPhy} is the following
\[
   \left(\partial_t \begin{pmatrix} 1 & 0 \\ 0 & 0 \end{pmatrix} + \begin{pmatrix} 0 & 0 \\ 0 & \kappa_n(\hat m)^{-1}  \end{pmatrix} + \begin{pmatrix} 0 & \diverg \\ {\interior{\grad}} & 0 \end{pmatrix}\right)\begin{pmatrix} u_{1,n} \\ u_{2,n}\end{pmatrix} = \begin{pmatrix} f_1 \\ f_2 \end{pmatrix}.
\]
We want to apply Corollary \ref{Co: Askew} with \[
A=\begin{pmatrix} 0 & \diverg \\ {\interior{\grad}} & 0 \end{pmatrix}
                                                \]
and 
\[
 (M_n(\partial_t^{-1}))_n \coloneqq  \left(\begin{pmatrix} 1 & 0 \\ 0 & 0 \end{pmatrix} + \partial_t^{-1}\begin{pmatrix} 0 & 0 \\ 0 & \kappa_n(\hat m)^{-1}  \end{pmatrix}\right)_n. 
\]
 The compactness condition on the operator $\begin{pmatrix} 0 & \diverg \\ {\interior{\grad}} & 0 \end{pmatrix}$ has already been established e.g.\ in \cite[Remark 3.2.2]{Waurick2011} or \cite[the end of the proof of Theorem 4.3]{Waurick2012} and the sequence $(\kappa_n(\hat m)^{-1})_n$ is a sequence of selfadjoint operators. Since $M_n(0)=M_1(0)=\begin{pmatrix} 1 & 0 \\ 0& 0 \end{pmatrix}$ for all $n\in\N$, the range condition is satisfied. Thus Corollary \ref{Co: Askew} applies. The sequence $(M_n)_n$ could be replaced by some convolution terms. Moreover it should be noted that the case of not necessarily symmetric $\kappa_n$'s has been considered in \cite{Waurick2012}. There, however, a second-order formulation was used which, for more general material laws, may not be available. The homogenized equations are derived in Section \ref{section: disco}, see equation \eqref{eq:phys_limit}.

\subsection*{Electromagnetism}
Let $\Omega\subseteqq \R^3$ open. The general form of Maxwell's equations in bi-anisotropic dissipative media used in \cite{BarStr} is
\[
   \left(\partial_t\begin{pmatrix} \eps & \gamma \\ \gamma^* & \mu \end{pmatrix} + \begin{pmatrix} \sigma_{11}* & \sigma_{12}* \\ \sigma_{21}* & \sigma_{22}* \end{pmatrix}+\begin{pmatrix} 0 & -\curl \\ {\interior{\curl}} & 0 \end{pmatrix}\right)\begin{pmatrix} E \\ H \end{pmatrix}= \begin{pmatrix} J\\ 0\end{pmatrix} 
\] with a $(c)$-material law (see Section \ref{section: set} for a definition) \[M(\partial_t^{-1})\coloneqq \begin{pmatrix} \eps & \gamma \\ \gamma^* & \mu \end{pmatrix} + \partial_t^{-1}\begin{pmatrix} \sigma_{11}* & \sigma_{12}* \\ \sigma_{21}* & \sigma_{22}* \end{pmatrix}\] for a $c>0$. Here, $\sigma_{jk}$ are $L(L_2(\Omega)^3)$-valued functions on $\R$, vanishing on $\R_{<0}$ and being such that 
the temporal convolutions $\sigma_{jk}*$ yield $0$-analytic material laws, $j,k\in\{1,2\}$. Moreover, the operator $\begin{pmatrix} \eps & \gamma \\ \gamma^* & \mu \end{pmatrix}$ is assumed to be selfadjoint and strictly positive definite in $L_2(\Omega)^6$. We emphasize here that the convolution kernels may also take values in the linear operators on $L_2(\Omega)^3$, which are not representable as multiplication operators, thus, in this way, generalizing the assumptions in \cite{BarStr}.
Now, consider a sequence of $(c)$-material laws $(M_n)_n$ of the above form with non-singular, strictly positive zeroth order term: $\begin{pmatrix} \eps_n & \gamma_n \\ \gamma^*_n & \mu_n \end{pmatrix}\geqq d>0$ for all $n\in \N$. Then, Corollary \ref{Co: Askew} applies if we assume $\Omega$ to be bounded and to satisfy suitable smoothness assumptions on the boundary, see e.g.~\cite{Picard1984,Picard2001,Witsch1993} (or (also for possibly other boundary conditions) \cite{BPS16,Costabel1990,Jochmann1997,Kuhn1999,Weber1980,Weck1974}). Indeed, the range condition is satisfied since $M_n(0)=\begin{pmatrix} \eps_n & \gamma_n \\ \gamma^*_n & \mu_n \end{pmatrix}$ is onto for all $n\in\N$  and, since $N(M_n(0))=\{0\}$, the compatibility condition \eqref{eq:comp_cond} also follows. Note that the homogenized equations are more complicated than in the case of the heat equation. This is due to the fact that both the off-diagonal operators in $A=\begin{pmatrix} 0 & -\curl \\ {\interior{\curl}} & 0 \end{pmatrix}$ have an infinite-dimensional nullspace. In case of the heat equation with Dirichlet boundary conditions, we have $A=\begin{pmatrix} 0 & \diverg \\ {\interior{\grad}} & 0 \end{pmatrix}$, where the nullspace of ${\interior{\grad}}$ is trivial.

To illustrate the versatility and applicability of Theorem \ref{Thm:(P2)} we show how our methods apply to the equations of thermopiezoelectricity.

\subsection*{Thermopiezoelectricity}
We assume $\Omega\subseteqq \R^3$ to be open and bounded. The equations of thermopiezoelectricity describe the interconnected effects of elasticity, thermodynamics and electro-magnetism. The set $\Omega$ models a body in its non deformed state. We recall the formulation as in \cite[6.3.3, p. 457]{Picard}, where the model given in \cite{Mind1974} is discussed, see also \cite{Mulholland2015}. The unknowns of the system are the time-derivative of the displacement field $v$, the stress tensor $T$, the electric and magnetic field $E$ and $H$ as well as the heat distributions $\theta$ and the heat flux $q$. Recall the spatial derivative operators from the introduction and define $\Diverg$ as the negative adjoint of $\interior\Grad$, the symmetrized gradient with homogeneous Dirichlet boundary conditions. The equations read as follows
\begin{multline*}
   \left( \partial_t \begin{pmatrix}
         \rho_0  & 0 & 0   &   0    &  0 & 0\\
        0 & C^{-1}  & C^{-1}d  &   0  & C^{-1}\lambda  &  0 \\
        0  &  d^*C^{-1} & \eps+d^*C^{-1}d  & 0&  p+d^*C^{-1}\lambda  & 0   \\
        0  &  0 & 0&\mu&  0  & 0   \\
        0  &  \lambda^*C^{-1} & p^*+\lambda^*C^{-1}d       &0&  \alpha+\lambda^* C^{-1}\lambda  & 0  \\
        0  &  0 & 0       &0&  0  & q_0+q_1(\alpha+\kappa\partial_t)^{-1} 
       \end{pmatrix}\right.\\ \left. + \begin{pmatrix}
         0 & \Diverg & 0   &    0  &  0 & 0\\
        \interior\Grad & 0  & 0   &    0  &  0 & 0\\
        0  &  0 & 0  & -\curl&   0  &  0 \\
        0  &  0 & {\interior{\curl}}&0&   0  &  0 \\
        0  &  0 & 0       &0&   0  &  \div \\
        0  &  0 & 0       &0&   {\interior{\grad}}  &  0 
       \end{pmatrix}\right)\begin{pmatrix} v \\ T \\ E \\ H\\ \theta \\ q \end{pmatrix} = \begin{pmatrix} f \\ 0 \\ J \\ 0 \\ g \\ 0 \end{pmatrix},
\end{multline*}
where $\rho_0,C,\eps,\mu,q_0,q_1,\alpha,\kappa,d,\lambda,p$ are bounded linear operators in appropriate $L_2(\Omega)$-spaces. 
To frame the latter system into the general context of this exposition, we find that
\[
   A = \begin{pmatrix}
         0 & \Diverg & 0   &    0  &  0 & 0\\
        \interior\Grad & 0  & 0   &    0  &  0 & 0\\
        0  &  0 & 0  & -\curl&   0  &  0 \\
        0  &  0 & {\interior{\curl}}&0&   0  &  0 \\
        0  &  0 & 0       &0&   0  &  \div \\
        0  &  0 & 0       &0&   {\interior{\grad}}  &  0 
       \end{pmatrix}.
\]
 The material law is given by 
 \[
  M(\partial_t^{-1}) = \begin{pmatrix}
         \rho_0  & 0 & 0   &   0    &  0 & 0\\
        0 & C^{-1}  & C^{-1}d  &   0  & C^{-1}\lambda  &  0 \\
        0  &  d^*C^{-1} & \eps+d^*C^{-1}d  & 0&  p+d^*C^{-1}\lambda  & 0   \\
        0  &  0 & 0&\mu&  0  & 0   \\
        0  &  \lambda^*C^{-1} & p^*+\lambda^*C^{-1}d       &0&  \alpha+\lambda^* C^{-1}\lambda  & 0  \\
        0  &  0 & 0       &0&  0  & q_0+q_1(\alpha+\kappa\partial_t)^{-1} 
       \end{pmatrix},
 \]
 $M$ is a $(c)$-material law, if we assume that one of the following conditions is satisfied
\ben
  \item\label{tpe1} $\rho_0,C,\eps,\mu,q_0,\alpha-p^*\eps^{-1}p,\kappa$ are selfadjoint and strictly positive,
  \item\label{tpe2} $\rho_0,C,\eps,\mu,q_1,\alpha-p^*\eps^{-1}p,\kappa$ are selfadjoint, strictly positive, $q_1\kappa^{-1}=\kappa^{-1}q_1$ and $q_0=0$.
\een
Indeed, the material law can be written as a block operator matrix in the form
\[
  M(\partial_t^{-1})= \begin{pmatrix}
                       M_{11} & 0 \\ 0 & M_{22}(\partial_t^{-1})
                      \end{pmatrix},
\]
where 
\begin{align*}
  M_{11}&= \begin{pmatrix}
         \rho_0  & 0 & 0   &   0    &  0 \\
        0 & C^{-1}  & C^{-1}d  &   0  & C^{-1}\lambda   \\
        0  &  d^*C^{-1} & \eps+d^*C^{-1}d  & 0&  p+d^*C^{-1}\lambda     \\
        0  &  0 & 0&\mu&  0     \\
        0  &  \lambda^*C^{-1} & p^*+\lambda^*C^{-1}d       &0&  \alpha+\lambda^* C^{-1}\lambda
       \end{pmatrix}\\&= \begin{pmatrix}
        1  & 0  & 0   &   0    &  0 \\
        0  & 1  & 0  &   0  & 0   \\
        0  & d^*& 1  & 0&  0     \\
        0  &  0 & 0  & 1&  0     \\
        0  &  \lambda^* & p^*\eps^{-1}    &0&  1
       \end{pmatrix}\begin{pmatrix}
         \rho_0  & 0 & 0   &   0    &  0 \\
        0 & C^{-1}  & 0  &   0  & 0   \\
        0  & 0 & \eps& 0& 0     \\
        0  &  0 & 0&\mu&  0     \\
        0  &  0 & 0       &0&  \alpha - p^*\eps^{-1}p
       \end{pmatrix}\begin{pmatrix}
         1  & 0 & 0   &   0    &  0 \\
        0 & 1  & d  &   0  & \lambda   \\
        0  &  0 & 1  & 0&  \eps^{-1}p     \\
        0  &  0 & 0&1&  0     \\
        0  &  0 & 0       &0&  1
       \end{pmatrix}
\end{align*} is strictly positive and, by choosing $\nu>0$ sufficiently large such that $\Abs{\partial_t^{-1}}$ becomes small enough,
\[
  M_{22}(\partial_t^{-1})=q_0 + q_1(\alpha+\kappa\partial_t)^{-1}=q_0+q_1\kappa^{-1}\partial_t^{-1} + \sum_{n=1}^\infty (-1)^n \partial_t^{-n-1}(\kappa^{-1}\alpha)^n\kappa^{-1} 
\]
is such that either $M_{22}(0)$ or $M_{22}'(0)$ is strictly positive. Thus, $M$ is a $(c)$-material law for some $c>0$.

Considering a sequence $(M_n)_n$ of such material laws with the boundedness and uniform positive definiteness assumptions from Corollary \ref{Co: Askew}, one may derive a homogenization result for these equations. We will not do this explicitly here. However, in order to satisfy the range condition, one has to assume that all entries of the material law sequence satisfy either condition \eqref{tpe1} or \eqref{tpe2}. To deduce that $A$ has the $(NC)$-property, we have to impose suitable geometric requirements on $\Omega$ as in the previous example. 

We refer the interested reader to more examples of first order formulations of standard evolutionary equations in mathematical physics to \cite{PicPhy,Picard}. With these formulations it is then rather straightforward to see when and how our homogenization result applies. 

\section{Auxiliary results on $0$-analytic material laws}\label{section: aux}

In this section, we provide the remaining results needed in Section \ref{section: hom}. Our main concern will be the discussion of \emph{$0$-analytic} material laws, that is, material laws that are analytic at $0\in\C$, cf.~\cite[Section 3.3]{PicPhy}. To establish Theorem \ref{Thm:(P2)} similarity transformations of $0$-analytic material laws have to be discussed, where our main interest is to show that under any of these similarity transformations a $(c)$-material law transforms into a $(c')$-material law for suitable $c'>0$. In order to achieve the main goal of this section, Theorem \ref{Th:final}, some technical results are required. We start with a fact concerning Hardy space functions. 

\begin{Le}\label{Le: CIF} Let $X$ be a Banach space, $\eps>0$, $\mu(\cdot)=\sum_{n=0}^\infty (\cdot)^{n}\mu_{n} \in \s H^\infty(B(0,\eps);X)$. Then for all $k,n\in\N_0$ we have
\ben
   \item $\Abs{\mu_n}\leqq \Abs{\mu}_\infty\left(\frac{2}{\eps}\right)^n$
   \item $\Abs{\sum_{n=k}^\infty z^{n-k}\mu_n  }\leqq 2\Abs{\mu}_\infty \left( \frac{2}{\eps}\right)^{k}$ for all $z\in B\left( 0,\frac{\eps}{4}\right)$.  
\een 
 \end{Le}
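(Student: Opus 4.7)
The plan is to treat this as a standard Cauchy-estimate/geometric-series exercise, with both parts following from the fact that $\mu$ is a bounded holomorphic function on the disc $B(0,\eps)$ with Taylor coefficients $\mu_n$ at the origin.

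For assertion (i), I would apply the Cauchy integral formula to the coefficients of the power series. Since $\mu \in \s H^\infty(B(0,\eps);X)$, for any $\rho \in (0,\eps)$ we have
\[
   \mu_n = \frac{1}{2\pi \i} \oint_{|z|=\rho} \frac{\mu(z)}{z^{n+1}}\dd z,
\]
whence the standard ML-estimate gives $\Abs{\mu_n} \leqq \Abs{\mu}_\infty \rho^{-n}$. Choosing $\rho = \eps/2$ (which lies in $(0,\eps)$ so the contour is admissible) yields $\Abs{\mu_n} \leqq \Abs{\mu}_\infty (2/\eps)^n$, which is (i).

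For assertion (ii), I would just substitute (i) into the tail of the power series and sum a geometric series. For $z \in B(0,\eps/4)$ we have $\abs{z} \leqq \eps/4$, so by the triangle inequality and (i),
\begin{align*}
   \Abs{\sum_{n=k}^\infty z^{n-k}\mu_n}
   &\leqq \sum_{n=k}^\infty \abs{z}^{n-k}\Abs{\mu_n}
   \leqq \Abs{\mu}_\infty \sum_{n=k}^\infty (\eps/4)^{n-k}(2/\eps)^n \\
   &= \Abs{\mu}_\infty (2/\eps)^k \sum_{m=0}^\infty (1/2)^m = 2\Abs{\mu}_\infty (2/\eps)^k.
\end{align*}
This is precisely (ii). There is no real obstacle here; the only thing to watch is that the radius $\eps/2$ used in the Cauchy estimate is strictly less than $\eps$ so the integral is defined, and that the ratio $(\eps/4)\cdot(2/\eps) = 1/2 < 1$ makes the geometric series in part (ii) converge with the clean constant $2$.
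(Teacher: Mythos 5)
Your proof is correct and matches the paper's argument exactly: the paper also obtains (i) from Cauchy's integral formula over the circle of radius $\eps/2$ and then derives (ii) as a straightforward consequence by summing the geometric series with ratio $1/2$. Nothing to add.
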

\begin{proof}
 The first assertion follows immediately from Cauchy's integral formula (integrate over a circle around $0$ with radius $\eps/2$) and the second is a straightforward consequence of the first.
\end{proof}
With these estimates, we can establish some properties of $0$-analytic material laws. Recall that the inner products discussed here are linear in the second and conjugate linear in the first component.
\begin{Prop}\label{Prop: M0+M1} Let $H$ be a Hilbert space, $\eps,c>0$, $0<r<\eps/2$. Let $M$ be a material law in $\s H^{\infty}(B(0,\eps);L(H))\cap \s H^{\infty,c}(B(r,r);L(H))$. Then $M(0)$ is selfadjoint. For $\phi=\phi_1 \oplus \phi_2\in \overline{R(M(0))}\oplus N(M(0))$, the inequalities
\[
    \langle M(0)\phi_1,\phi_1\rangle \geqq 0 \text{ and } \langle \Re M'(0)\phi_2,\phi_2\rangle \geqq c\langle\phi_2,\phi_2\rangle
\] 
hold. If, in addition, $R(M(0))\subseteqq H$ is closed, there exists $d>0$, such that for $\phi_1\in R(M(0))$ we have
\[
 \langle M(0)\phi_1,\phi_1\rangle \geqq d\langle \phi_1,\phi_1\rangle.
\]\end{Prop}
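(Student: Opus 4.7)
The plan is to read off the four claims --- selfadjointness of $M(0)$, the estimate $\langle M(0)\phi_1,\phi_1\rangle\geqq 0$, the estimate on $\Re M'(0)$ on $N(M(0))$, and the spectral gap on $R(M(0))$ when the latter is closed --- in turn from the defining estimate
\[
  \Re\bigl(z^{-1}M(z)\bigr)\geqq c \qquad (z\in B(r,r)),
\]
by Taylor-expanding $M$ at $0\in\C$ and probing along suitably chosen rays inside the disc $B(r,r)$. Since $M\in\s H^\infty(B(0,\eps);L(H))$, Lemma \ref{Le: CIF} yields $M(z)=M(0)+zM'(0)+z^2 R(z)$ with $R$ bounded in operator norm on a neighbourhood of $0$; I will freely use this expansion as $z\to 0$.

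For the selfadjointness, I exploit the geometry of $B(r,r)$: writing $z=\rho e^{i\phi}$, one checks $z\in B(r,r)$ for every $\phi\in(-\pi/2,\pi/2)$ and every $\rho\in(0,2r\cos\phi)$. Substituting and using the expansion,
\[
  \rho^{-1}\Re\bigl(e^{-i\phi}M(0)\bigr)+\Re M'(0)+O(\rho)\geqq c,
\]
so letting $\rho\to 0+$ forces $\Re\bigl(e^{-i\phi}M(0)\bigr)\geqq 0$ (as operators) for every such $\phi$. Splitting $M(0)=A+iB$ with $A,B$ selfadjoint, the left-hand side equals $\cos\phi\,A+\sin\phi\,B$, and sending $\phi\to\pm\pi/2$ yields $\pm B\geqq 0$, so $B=0$; plugging $\phi=0$ back gives $A\geqq 0$. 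Hence $M(0)$ is selfadjoint and nonnegative, which in particular covers $\langle M(0)\phi_1,\phi_1\rangle\geqq 0$ for $\phi_1\in\overline{R(M(0))}$.

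The coercivity on $N(M(0))$ now follows by probing along the positive real axis: for $t\in(0,2r)$ and $\phi_2\in N(M(0))$ the term $t^{-1}M(0)\phi_2$ vanishes, leaving
\[
  c\langle\phi_2,\phi_2\rangle\leqq \langle \Re(t^{-1}M(t))\phi_2,\phi_2\rangle=\langle \Re M'(0)\phi_2,\phi_2\rangle+O(t)\langle\phi_2,\phi_2\rangle,
\]
and $t\to 0+$ delivers the claim. Finally, when $R(M(0))$ is closed, selfadjointness yields $R(M(0))=N(M(0))^\perp$, so the restriction of $M(0)$ to $R(M(0))$ is a bounded, nonnegative, bijective operator on the Hilbert space $R(M(0))$; the open mapping theorem (or equivalently the spectral theorem applied to the bounded selfadjoint operator $M(0)$, for which $0$ is then an isolated point of the spectrum) provides the uniform lower bound $d>0$. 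The only substantive step is the disc-geometry argument forcing selfadjointness of $M(0)$; everything else is routine Taylor analysis and standard Hilbert-space theory.
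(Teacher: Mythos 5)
Your proposal is correct and follows essentially the same route as the paper: Taylor-expand $M$ at $0$, substitute into the dissipativity estimate $\Re\bigl(z^{-1}M(z)\bigr)\geqq c$, and exploit the geometry of $B(r,r)$ near $z=0$ to force $\Im\langle M(0)\phi,\phi\rangle=0$ and $\Re\langle M(0)\phi,\phi\rangle\geqq 0$, then restrict to $N(M(0))$ along the real axis. The only cosmetic difference is bookkeeping: you parametrize by $z=\rho e^{i\phi}$ and split $M(0)=A+iB$ at the operator level, whereas the paper fixes a vector $\phi$, works with the scalar quantities $x_\phi=\Im\langle M(0)\phi,\phi\rangle$, $y_\phi=\Re\langle M(0)\phi,\phi\rangle$, and uses the homeomorphism $z\mapsto z^{-1}$ from $B(r,r)$ onto a right half-plane to let $\Im z^{-1}$ range over all of $\R$ — two equivalent ways of exhausting the disc's directional freedom; the closed-range argument via the bounded inverse theorem and the spectral theorem is identical.
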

\begin{proof}
  We expand $M$ into a power series about $0$: $M(z)=\sum_{n=0}^\infty z^n M_n$ for $z\in B(0,\eps)$ and suitable $(M_n)_n$ in $L(H)$. Then $M(0)=M_0$ and $M'(0)=M_1$. For $\phi\in H$ define $x_\phi\coloneqq  \Im \langle M(0)\phi,\phi\rangle$ and $y_\phi\coloneqq  \Re \langle M(0)\phi,\phi\rangle$. It is easy to see that $T: B(r,r) \to \R_{>\frac{1}{2r}}+i\R: z\mapsto z^{-1}$ is homeomorphic. Thus, for $z\in B(r,r)$ with $z_1\coloneqq \Im T(z)$, $z_2\coloneqq \Re T(z)$, we have
\begin{align*}
  c\langle \phi,\phi\rangle &\leqq \Re \langle\phi, z^{-1} M(z)\phi\rangle\\
                            &= \Re  (i z_1 +z_2) (ix_\phi +y_\phi)+\Re \langle \phi, \sum_{n=1}^\infty z^{n-1} M_n \phi \rangle\\
                            &= -z_1x_\phi + z_2 y_\phi +\Re \langle \phi, \sum_{n=1}^\infty z^{n-1} M_n \phi \rangle.
\end{align*}
The left-hand side is non-negative. The last term on the right-hand side is bounded for $z\to 0$. Moreover, since $T$ is bijective (in particular, for every $z_2$ the values of $z_1$ range over the whole real axis), it follows that $x_\phi=0$. Thus, we arrive at
\[
  c\langle \phi,\phi\rangle\leqq  z_2 y_\phi +\Re \langle \phi, \sum_{n=1}^\infty z^{n-1} M_n \phi \rangle.
\]
Now, since $z_2$ can be chosen arbitrarily large, while the second term of the right-hand side remains bounded, it follows that $y_\phi\geqq 0$. Thus, for every $\phi\in H$, we deduce that $\langle \phi, M(0)\phi\rangle\geqq 0$. Since $M(0)$ is a bounded operator in the complex Hilbert space $H$, the operator $M(0)$ is selfadjoint and positive (semi-)definite and therefore $H=\overline{R(M(0))}\oplus N(M(0))$. Let $\phi\in N(M(0))$. Then 
for $\eps/2>\eta>0$, 
\begin{align*}
   c\abs{ \phi}^2&\leqq \Re \langle \phi, \sum_{k=1}^\infty \eta^{k-1} M_k \phi \rangle\\
                 &= \Re \langle \phi, M'(0)\phi\rangle + \eta\Re \langle \phi, \sum_{k=2}^\infty \eta^{k-2} M_k \phi \rangle. 
\end{align*}
If we let $\eta\to 0+$, we get that $\Re \langle \phi, M'(0)\phi\rangle\geqq c\abs{ \phi}^2$. Now, $M(0)$ is invariant on its range and the restriction of $M(0)$ to its range is one-to-one. Thus, if $R(M(0))$ is closed, the closed graph theorem implies that $M(0):R(M(0))\to R(M(0))$ is continuously invertible. By the spectral theorem for continuous and selfadjoint operators it follows that $M(0)$ is strictly positive on its range.
\end{proof}
\begin{rems}\label{rem:conv}
 We shall note here that the converse of Proposition \ref{Prop: M0+M1} is also true in the following sense: Let $M \in \s H^\infty(B(0,\eps);L(H))$  be such that $M(0)=M(0)^*$. Assume that there exist $d,c>0$ such that for all $\phi_1\in R(M(0)), \phi_2\in N(M(0))$
\[
   \langle M(0)\phi_1,\phi_1\rangle \geqq d\langle \phi_1,\phi_1\rangle \text{ and } \langle \Re M'(0)\phi_2,\phi_2\rangle \geqq c\langle\phi_2,\phi_2\rangle.
\]
Then $R(M(0))\subseteqq H$ is closed and for $0<r\leqq \frac{1}{2\max\{\nu_1,{\hat\delta}^{-1}\}}$, $M\in \s H^{\infty,c/3}(B(r,r);L(H))$, cf.~\cite[Lemma 2.3]{Trostorff2011} or \cite[Remark 6.2.7]{Picard}, where $\nu_1\coloneqq \frac{1}{d}\left(\frac{2c}{3}+\frac{3}{c}\left( \Abs{M}_\infty\frac{2}{\eps}\right)^2+\frac{2}{\eps}\Abs{M}_\infty\right)$ and $\hat\delta\coloneqq \min\{\Abs{M}_\infty^{-1} \left(\frac{\eps}{2}\right)^{2}\frac{c}{6},\frac{\eps}{4}\}$.
\begin{proof} It is easy to see that $R(M(0))$ is closed. Let $( M_n)_n$ in $L(H)$ be such that $M(z)=\sum_{n=0}^\infty z^n  M_n$ for all $z\in B(0,\eps)$. By Lemma \ref{Le: CIF}, we have $\Abs{M'(0)}\leqq \frac{2}{\eps}\Abs{M}_\infty$ and, for all $0<\delta \leqq\eps/4$ and $z\in B(0,\delta)$, $\Abs{\sum_{n=2}^\infty z^{n-1}  M_n}\leqq 2\delta \left(\frac{2}{\eps}\right)^{2} \Abs{M}_\infty $. For $\nu\geqq \max\{\nu_1,{\hat\delta}^{-1}\}$, $z\in B(1/(2\nu),1/(2\nu))$, $\phi=(\phi_1, \phi_2) \in R(M(0))\oplus N(M(0))$ and $\eta>0$,
\begin{align*}
& \langle \phi,\Re z^{-1} M(z)\phi\rangle \\&= \left(\Re z^{-1}\right)\langle \phi_1,M(0)\phi_1\rangle + \langle  \phi,\Re M'(0)\phi\rangle +\Re\langle  \phi,\sum_{n=2}^\infty z^{n-1} M_n\phi\rangle\\ 
 &\geqq \nu d \abs{\phi_1}^2+c\abs{\phi_2}^2-2\Abs{M'(0)}\abs{\phi_1}\abs{\phi_2}-\Abs{M'(0)}\abs{\phi_1}^2-2\hat\delta \Abs{M}_\infty \left(\frac{2}{\eps}\right)^{2}\abs{\phi}^2\\
 &\geqq \left(\nu d - \eta \Abs{M'(0)}^2 -\Abs{M'(0)}\right)\abs{\phi_1}^2+\left(c-\frac{1}{\eta}\right)\abs{\phi_2}^2 - \frac{c}{3}\abs{\phi}^2\\
 &\geqq\left(\nu d - \eta \left(\Abs{M}_\infty\frac{2}{\eps}\right)^2 -\Abs{M}_\infty\frac{2}{\eps}-\frac{c}{3}\right)\abs{\phi_1}^2+\left(\frac{2c}{3}-\frac{1}{\eta}\right)\abs{\phi_2}^2.
\end{align*}
If $\eta=3/c$, using $\nu>\nu_1$, we obtain
\[
 \langle \Re z^{-1} M(z)\phi,\phi\rangle \geqq \left(\nu d - \frac{3}{c} \left(\Abs{M}_\infty\frac{2}{\eps}\right)^2 -\Abs{M}_\infty\frac{2}{\eps}-\frac{c}{3}\right)\abs{\phi_1}^2+\frac{c}{3}\abs{\phi_2}^2
\geqq \frac{c}{3}\abs{\phi}^2.\qedhere
\]
\end{proof}
\end{rems}

This completes the general discussion on $0$-analytic material laws. In the following we focus on material laws, which satisfy the following assumption.
\begin{Ass}\label{Ass:1} Assume there exist Hilbert spaces $H_1, H_2$ and constants $\eps,c>0$, $0<r<\eps/2$ with \[M\in \s H^\infty(B(0,\eps);L(H_1\oplus H_2))\cap \s H^{\infty,c}(B(r,r);L(H_1\oplus H_2))\] and $R(M(0))\subseteqq H_1\oplus H_2$ closed. 
\end{Ass}

Before we turn to  similarity transformations on the material law, we study some properties of a material law satisfying Assumption \ref{Ass:1}. These properties are stated in the next theorem for which we need the following elementary prerequisite.
\begin{Le}\label{Le: Off=0} Let $H_1, H_2$ be Hilbert spaces. Assume that 
\[
  \begin{pmatrix} M_{11}& M_{12} \\ M_{21} & M_{22}\end{pmatrix} \in L(H_1\oplus H_2)
\]
is selfadjoint and positive definite. Then $M_{12}=M_{21}^*$ and if $M_{22}=0$ then $M_{12}=0$.
 \end{Le}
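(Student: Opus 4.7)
The plan is to handle the two assertions separately, each via an elementary computation.

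For the first claim, I would simply take the adjoint of the block operator matrix in the canonical way: the adjoint of $\begin{pmatrix} M_{11}& M_{12} \\ M_{21} & M_{22}\end{pmatrix}$ is $\begin{pmatrix} M_{11}^*& M_{21}^* \\ M_{12}^* & M_{22}^*\end{pmatrix}$. Comparing the off-diagonal blocks of the original operator with its adjoint and using the assumption $M=M^*$ immediately yields $M_{12}=M_{21}^*$.

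For the second assertion, my strategy is to plug a suitably scaled test vector into the positivity inequality and let the scaling parameter vary. Concretely, given $\phi_1\in H_1$ and $\phi_2\in H_2$, I would set $\phi_s := (s\phi_1,\phi_2)\in H_1\oplus H_2$ with $s\in \R$. Expanding $\langle \phi_s,M\phi_s\rangle$ using $M_{22}=0$ and the identity $M_{21}=M_{12}^*$ from the first part gives
\[
  \langle \phi_s,M\phi_s\rangle = s^2\langle \phi_1,M_{11}\phi_1\rangle + 2s\,\Re\langle \phi_1,M_{12}\phi_2\rangle.
\]
Positivity requires this quadratic polynomial in $s$ to be non-negative on $\R$, which forces its discriminant to be non-positive; hence $\Re\langle \phi_1,M_{12}\phi_2\rangle=0$. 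Replacing $\phi_2$ by $i\phi_2$ and using the sesquilinearity of the complex inner product then eliminates the imaginary part as well, giving $\langle \phi_1,M_{12}\phi_2\rangle=0$ for arbitrary $\phi_1,\phi_2$, whence $M_{12}=0$.

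I do not anticipate a genuine obstacle here; the only point to be a little careful about is the complex scalar field, which is why the test vector $(s\phi_1,\phi_2)$ must be exploited for both real $s$ and (effectively) imaginary $s$ (equivalently, by applying the real-$s$ argument once to $\phi_2$ and once to $i\phi_2$). Note also that only positive semi-definiteness is used.
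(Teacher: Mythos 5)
Your proof is correct and rests on the same underlying idea as the paper's: exploit positivity of the quadratic form $\langle\phi_s,M\phi_s\rangle$ as one component of the test vector is scaled, thereby forcing the cross term $\Re\langle\phi_1,M_{12}\phi_2\rangle$ to vanish. The paper scales the $\phi_2$-component (giving an expression linear in the scaling parameter) and argues by contradiction with a specially chosen $\phi_1=-M_{12}\phi_2$, whereas you scale $\phi_1$ and invoke the discriminant of a quadratic together with the $i\phi_2$ trick; these are merely cosmetic variants of one another.
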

\begin{proof}
 It is easy to see that $M_{11}=M_{11}^*$ and $M_{22}=M_{22}^*$ and thus 
\[
  \begin{pmatrix} 0& M_{12} \\ M_{21} & 0\end{pmatrix} =\begin{pmatrix} M_{11}& M_{12} \\ M_{21} & M_{22}\end{pmatrix} -\begin{pmatrix} M_{11}& 0 \\ 0 & M_{22}\end{pmatrix}
\]
is selfadjoint. Assume now that $M_{22}=0$. If $M_{12}=M_{21}^*\neq 0$, then there exists $(\phi_1,\phi_2)\in H_1\oplus H_2$ such that $\Re \langle M_{12}\phi_2,\phi_1\rangle =\langle M_{12}\phi_2,\phi_1\rangle +\langle M_{21}\phi_1,\phi_2\rangle < 0$. For $\alpha>0$ we deduce that
\[
 0\leqq \left\langle \begin{pmatrix} M_{11}& M_{12} \\ M_{21} & 0\end{pmatrix} \begin{pmatrix}
                                                                                \phi_1 \\ \alpha \phi_2
                                                                               \end{pmatrix}, \begin{pmatrix}
                                                                                \phi_1 \\ \alpha \phi_2
                                                                               \end{pmatrix}\right\rangle
= \langle M_{11}\phi_1,\phi_1\rangle +\alpha \left(\langle M_{12}\phi_2,\phi_1\rangle +\langle M_{21}\phi_1,\phi_2\rangle\right),
\]
which yields a contradiction if $\alpha$ is chosen large enough.
\end{proof}
In the following, for Hilbert spaces $H_1,H_2$ we denote the canonical orthogonal projection $H_1\oplus H_2\to H_j$ onto the $j$th coordinate by $\pi_j$, $j\in\{1,2\}$.
\begin{Sa}\label{Sa:2times2yields4times4} Let $M$ satisfy Assumption \ref{Ass:1}. Using the notation from Assumption \ref{Ass:1}, we define 
\[
   G_1\coloneqq  R(\pi_1 M(0)\pi_1^*),\ G_2\coloneqq  N(\pi_1 M(0)\pi_1^*),\ G_3\coloneqq  R(\pi_2 M(0)\pi_2^*),\ G_4\coloneqq  N(\pi_2 M(0)\pi_2^*).
\]
Then $M$ has the following form:
\begin{multline*}
    M = \left( z\mapsto \begin{pmatrix} M^{(0)}_{11} & 0 & M^{(0)}_{13} &0 \\ 0 & 0 & 0&0 \\ M^{(0)}_{31} & 0 & M^{(0)}_{33}&0 \\
  0&0&0&0 \end{pmatrix}+z\begin{pmatrix} M^{(1)}_{11}(z) & M^{(1)}_{12}(z) & M^{(1)}_{13}(z) & M^{(1)}_{14}(z) \\ M^{(1)}_{21}(z) & M^{(1)}_{22}(z) & M^{(1)}_{23}(z) & M^{(1)}_{24}(z) \\ M^{(1)}_{31}(z) & M^{(1)}_{32}(z) & M^{(1)}_{33}(z) & M^{(1)}_{34}(z) \\
                M^{(1)}_{41}(z) & M^{(1)}_{42}(z) & M^{(1)}_{43}(z) & M^{(1)}_{44}(z)
\end{pmatrix}\right) \\ \in \s H^{\infty}\left(B(0,\eps);L\left(\bigoplus_{j=1}^4 G_j\right)\right),
\end{multline*}
where for $j,k\in \{1,2,3,4\}$ we have $M^{(1)}_{jk}\in \s H^\infty(B(0,\eps);L(G_k,G_j))$ and if $j,k\in \{1,3\}$ we have ${M^{(0)}_{kj}}^*=M^{(0)}_{jk} \in L(G_k,G_j)$. Moreover, there is $d>0$ such that $M^{(0)}_{jj}\geqq d$ for $j\in\{1,3\}$.
\end{Sa}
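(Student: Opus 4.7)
The plan is to interpret $M(0)$ as a selfadjoint positive semi-definite block operator on the $2\times 2$ decomposition $H_1\oplus H_2$, refine that decomposition by splitting each $H_j$ into the range and nullspace of the corresponding diagonal block of $M(0)$, and read off the desired $4\times 4$ structure by combining selfadjointness, Lemma~\ref{Le: Off=0}, and the Taylor expansion $M(z)=M(0)+z\tilde M(z)$.

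First I would apply Proposition~\ref{Prop: M0+M1} to $M$: closedness of $R(M(0))$ from Assumption~\ref{Ass:1} yields that $M(0)$ is selfadjoint with $M(0)\geqq d_{0}$ on $R(M(0))=N(M(0))^{\bot}$ for some $d_{0}>0$. Writing $M(0)=\left(\begin{smallmatrix} A_{11} & A_{12}\\ A_{12}^{*} & A_{22}\end{smallmatrix}\right)$ with $A_{jk}:=\pi_{j}M(0)\pi_{k}^{*}$, selfadjointness forces $A_{jj}^{*}=A_{jj}$ and $A_{jj}\geqq 0$ on $H_{j}$. Observe next that the compression $M_{jj}(\cdot):=\pi_{j}M(\cdot)\pi_{j}^{*}$ is again a $(c)$-material law on $H_{j}$, since for $\psi\in H_{j}$ the isometric inclusion $\pi_{j}^{*}$ yields $\langle\psi,\Re z^{-1}M_{jj}(z)\psi\rangle=\langle\pi_{j}^{*}\psi,\Re z^{-1}M(z)\pi_{j}^{*}\psi\rangle\geqq c\,\Abs{\psi}^{2}$ for all $z\in B(r,r)$. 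Once the range of $M_{jj}(0)=A_{jj}$ is shown to be closed in $H_{j}$, Proposition~\ref{Prop: M0+M1} applied to $M_{jj}$ delivers a common lower bound $A_{jj}\geqq d$ on $G_{2j-1}:=R(A_{jj})$ for some $d>0$, and $H_{j}$ splits orthogonally as $G_{2j-1}\oplus G_{2j}$ with $G_{2j}:=N(A_{jj})$.

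Establishing this closed-range property for the compressions is, in my view, the main obstacle: compressions of positive operators with closed range do not in general inherit closed range under merely operator-theoretic hypotheses, so the argument has to genuinely exploit the combination of analyticity of $M$ with its uniform $(c)$-positivity on $B(r,r)$, not just the selfadjointness of $M(0)$ or the closedness of $R(M(0))$. Once that point is settled, the remaining structure is essentially formal. Restricting $M(0)$ to the selfadjoint positive semi-definite operator on $G_{2}\oplus H_{2}$ makes the $(1,1)$-block equal to $A_{11}|_{G_{2}}=0$; the mirror image of Lemma~\ref{Le: Off=0} then forces the corresponding off-diagonal block to vanish, i.e.\ $A_{12}^{*}\phi=0$ for every $\phi\in G_{2}$, which is the same as $R(A_{12})\subseteqq G_{1}$. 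The analogous argument on $H_{1}\oplus G_{4}$ yields $A_{12}\phi=0$ for $\phi\in G_{4}$, so $A_{12}$ factors through $G_{3}$, giving a map $G_{3}\to G_{1}$. Together with $M(0)|_{G_{2}\oplus G_{4}}=0$ this produces the stated $4\times 4$ form of the constant term $M^{(0)}$, with $M^{(0)}_{jk}\in L(G_{k},G_{j})$ for $j,k\in\{1,3\}$ and $M^{(0)}_{kj}=(M^{(0)}_{jk})^{*}$ from selfadjointness. For the higher-order terms I would write $M(z)=M(0)+z\tilde M(z)$ with $\tilde M\in\s H^{\infty}(B(0,\eps);L(H_{1}\oplus H_{2}))$, decompose each operator $\tilde M(z)$ block-wise along $\bigoplus_{j=1}^{4}G_{j}$ to define the entries $M^{(1)}_{jk}(z)\in\s H^{\infty}(B(0,\eps);L(G_{k},G_{j}))$, and reassemble everything to obtain the claimed representation.
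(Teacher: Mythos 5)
Your plan reproduces the paper's argument almost step for step: apply Proposition~\ref{Prop: M0+M1} to $M$ to get that $M(0)$ is selfadjoint and strictly positive on its (closed) range, split each $H_j$ along $R(\pi_jM(0)\pi_j^*)\oplus N(\pi_jM(0)\pi_j^*)$, and then apply Lemma~\ref{Le: Off=0} to suitable selfadjoint positive sub-block matrices of $M(0)$ to annihilate every off-diagonal zeroth-order entry touching $G_2$ or $G_4$; the Taylor step is the same. Your observation that the diagonal compressions $M_{jj}$ are themselves $(c)$-material laws is a clean way to organize the positivity estimates and is actually more explicit than the paper's own wording.

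However, you are right to be uneasy about the closed-range step, and I would push harder than you do: the paper does not resolve it either. Its proof simply asserts that $M^{(0)}_{jj}$ is ``selfadjoint and strictly positive definite and therefore $H_j=G_{2j-1}\oplus G_{2j}$'', which is precisely the unproved claim you flagged, since strict positivity of $M(0)$ on its range does not by itself force the compressions $\pi_jM(0)\pi_j^*$ to have closed range. Moreover, the analytic/material-law structure you hope to exploit does not automatically rescue this. Take $H_1=H_2=\ell^2$, let $S$ be the compact injective selfadjoint diagonal operator with eigenvalues $1/n$, and let $P$ be the orthogonal projection of $H_1\oplus H_2$ onto $\{(x,Sx):x\in H_1\}$. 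Then $M(z):=P+z$ lies in $\s H^\infty(B(0,\eps);L(H_1\oplus H_2))\cap\s H^{\infty,1}(B(r,r);L(H_1\oplus H_2))$ for every $\eps,r>0$, because $\Re z^{-1}M(z)=(\Re z^{-1})P+1\geqq 1$, and $R(M(0))=R(P)$ is closed, so Assumption~\ref{Ass:1} holds; yet $\pi_2M(0)\pi_2^*=S^2(1+S^2)^{-1}$ is injective with dense non-closed range, so $G_3\oplus G_4\neq H_2$ and the asserted decomposition fails. So the step cannot be filled in at the generality of Assumption~\ref{Ass:1} alone; the theorem tacitly relies on $M(0)$ being compatible with the $H_1\oplus H_2$ splitting (in all of the paper's applications $M(0)$ is block-diagonal, where the issue disappears). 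This is a gap in the paper's own proof at least as much as in yours.
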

\begin{proof}
 By Proposition \ref{Prop: M0+M1}, we know that $M(0)$ is selfadjoint and strictly positive definite on its range. Thus, \[M^{(0)}_{jj}=\left(\pi_j^* M(0)\pi_j : G_j \to G_j\right)\] is selfadjoint and strictly positive definite and therefore $H_{\frac{1}{2}j+\frac{1}{2}} = G_{j}\oplus G_{j+1}$ for $j\in\{1,3\}$. We denote by $\rho_j:H_1\oplus H_2\to G_j$ the orthogonal projections onto $G_j$ and define $M_{jk}^{(0)} \coloneqq  \rho_j M(0)\rho_k^*$, $M_{jk}^{(1)}\coloneqq \rho_j\left(M-M(0)\right)\rho_k^*$ for all $j,k\in \{1,2,3,4\}$. Hence, 
\[
   M=\left( z\mapsto \begin{pmatrix} M^{(0)}_{11} & 0 & M^{(0)}_{13} & M^{(0)}_{14}  \\ 0 & 0 & M^{(0)}_{23}&M^{(0)}_{24} \\ M^{(0)}_{31} & M^{(0)}_{32} & M^{(0)}_{33}&0 \\
  M^{(0)}_{41} &M^{(0)}_{42}&0&0 \end{pmatrix}+z\begin{pmatrix} M^{(1)}_{11}(z) & M^{(1)}_{12}(z) & M^{(1)}_{13}(z) & M^{(1)}_{14}(z) \\ M^{(1)}_{21}(z) & M^{(1)}_{22}(z) & M^{(1)}_{23}(z) & M^{(1)}_{24}(z) \\ M^{(1)}_{31}(z) & M^{(1)}_{32}(z) & M^{(1)}_{33}(z) & M^{(1)}_{34}(z) \\
                M^{(1)}_{41}(z) & M^{(1)}_{42}(z) & M^{(1)}_{43}(z) & M^{(1)}_{44}(z)
\end{pmatrix}\right).
\]
As $M(0)$ is selfadjoint, Lemma \ref{Le: Off=0} shows that ${M^{(0)}_{kj}}^*=M^{(0)}_{jk} \in L(G_k,G_j)$ for all $k,j \in\{1,2,3,4\}$. Since $M(0)$ is positive definite, it follows that the block operator matrices 
\[
 \begin{pmatrix} M^{(0)}_{11} & 0 & 0 & M^{(0)}_{14}  \\ 0 & 0 & 0&0 \\ 0 & 0 & 0&0 \\
  M^{(0)}_{41} &0&0&0 \end{pmatrix},\ \begin{pmatrix} 0 & 0 & 0 & 0  \\ 0 & 0 & M^{(0)}_{23}&0 \\ 0 & M^{(0)}_{32} & M^{(0)}_{33}&0 \\
 0&0&0&0 \end{pmatrix},\ \begin{pmatrix} 0 & 0 & 0 & 0  \\ 0 & 0 & 0&M^{(0)}_{24} \\ 0& 0 & 0&0 \\
  0 &M^{(0)}_{42}&0&0 \end{pmatrix}
\]
are positive definite as well. Thus, by Lemma \ref{Le: Off=0}, we deduce that $M^{(0)}_{14}={M^{(0)}_{41}}^*=0$, $M^{(0)}_{23}={M^{(0)}_{32}}^*=0$ and $M^{(0)}_{24}={M^{(0)}_{42}}^*=0$.
\end{proof}
For the next theorem we note that, for Hilbert spaces $H_1,H_2$ and $B\in L(H_2,H_1)$, we have
\begin{equation}\label{eq:ineq}
   \begin{pmatrix} 1 & B \\ 0 & 1 \end{pmatrix}^{-1}=\begin{pmatrix} 1 & -B \\ 0 & 1 \end{pmatrix} \text{ and }\Abs{\begin{pmatrix} 1 & B \\ 0 & 1 \end{pmatrix}^{-1}}\leqq \sqrt{1+\Abs{B}+\Abs{B}^2}.
\end{equation} Moreover, we need the following lemmas:
\begin{Le}\label{Le: PAPQAP_nice} Let $H$ be a Hilbert space. Let $T\in L(H)$ be continuously invertible, $A,B\in L(H)$. If $A=T^*BT$, i.e., $A$ and $B$ are \emph{similar}, then $\Re A = T^* \Re B T$ and if in addition $\Re B\geqq c>0$ then $\Re A\geqq \frac{c}{\Abs{T^{-1}}^2}$.
\end{Le}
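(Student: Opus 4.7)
The plan is to handle the two assertions in the obvious order, neither of which should present genuine difficulty. First I would establish the identity $\Re A = T^*(\Re B) T$ by direct computation: from $A = T^*BT$ one gets $A^* = T^*B^*T$ (since $(T^*BT)^* = T^*B^*T^{**} = T^*B^*T$), and averaging gives
\[
\Re A = \tfrac{1}{2}(A+A^*) = \tfrac{1}{2}(T^*BT + T^*B^*T) = T^*\bigl(\tfrac{1}{2}(B+B^*)\bigr)T = T^*(\Re B)T.
\]

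For the second statement I would pair $\Re A$ with an arbitrary vector $\phi \in H$ and shift the $T$ factors onto $\phi$:
\[
\langle \Re A\,\phi,\phi\rangle = \langle T^*(\Re B) T\phi,\phi\rangle = \langle (\Re B) T\phi, T\phi\rangle \geqq c\,\lvert T\phi\rvert^2,
\]
using the assumption $\Re B \geqq c$. Since $T$ is continuously invertible, we may write $\phi = T^{-1}T\phi$, whence $\lvert \phi\rvert \leqq \Abs{T^{-1}}\lvert T\phi\rvert$, i.e.\ $\lvert T\phi\rvert^2 \geqq \lvert\phi\rvert^2/\Abs{T^{-1}}^2$. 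Substituting this back yields
\[
\langle \Re A\,\phi,\phi\rangle \geqq \frac{c}{\Abs{T^{-1}}^2}\,\lvert\phi\rvert^2,
\]
which is precisely the desired lower bound $\Re A \geqq c/\Abs{T^{-1}}^2$.

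There is no real obstacle here; the only point requiring a moment's attention is the bookkeeping when passing from $\lvert T\phi\rvert^2$ to $\lvert\phi\rvert^2$, which is a one-line consequence of $\phi = T^{-1}(T\phi)$ and the operator-norm bound. The lemma is a technical convenience used later to transfer strict accretivity from $B$ to any operator similar to $B$ via a bounded boundedly invertible transformation $T$, as will be needed in the Gauss-step transformations on material laws.
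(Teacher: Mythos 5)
Your proof is correct and follows essentially the same route as the paper: the identity $\Re A = T^*(\Re B)T$ is obtained by averaging $A$ with its adjoint, and the lower bound follows by pairing with $\phi$ and estimating $\lvert T\phi\rvert^2 \geqq \lvert\phi\rvert^2/\Abs{T^{-1}}^2$. The only (cosmetic) difference is that you spell out the step $\phi = T^{-1}(T\phi)$ which the paper leaves implicit.
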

\begin{proof} We have $2\Re A = A^*+A = T^*B^*T+T^*BT=2 T^*\Re B T$. Assume that $\Re B\geqq c$ for some $c>0$. Then, for $\phi\in H$, \[\langle \Re A \phi,\phi\rangle = \langle \Re B T\phi,T\phi\rangle \geqq c\langle T\phi,T\phi\rangle\geqq \frac{c}{\Abs{T^{-1}}^2}\langle\phi,\phi\rangle.\qedhere\] 
\end{proof}


\begin{Le}\label{Le:sim_gen_hsf} Let $M$ satisfy Assumption \ref{Ass:1}. Using the notation from Assumption \ref{Ass:1}, we define 
\[
 G_1\coloneqq  R(\pi_1 M(0)\pi_1^*),\ G_2\coloneqq  N(\pi_1 M(0)\pi_1^*),\ G_3\coloneqq  R(\pi_2 M(0)\pi_2^*),\ G_4\coloneqq  N(\pi_2 M(0)\pi_2^*).
\] 
 Let $N^{(0)}_{13}\in L(G_3,G_1)$, $N^{(0)}_{14}\in L(G_4,G_1)$, $N^{(0)}_{41}\in L(G_1,G_4)$, $N^{(0)}_{24}\in L(G_4,G_2)$, $N_{1}^{(1)}\in \s H^{\infty}(B(0,\eps);L(G_3\oplus G_4,G_1\oplus G_2))$,  $N_{1'}^{(1)}\in \s H^{\infty}(B(0,\eps);L(G_1\oplus G_2,G_3\oplus G_4))$ and
\begin{align*}
 N_{1}&\coloneqq  \left(z\mapsto \begin{pmatrix} N^{(0)}_{13} & N^{(0)}_{14} \\ 0  & N^{(0)}_{24} \end {pmatrix}+zN_{1}^{(1)}(z)\right)\\
 N_{1'}&\coloneqq   \left(z\mapsto \begin{pmatrix} {N^{(0)}_{13}}^* & 0 \\ {N^{(0)}_{41}}  & {N^{(0)}_{24}}^* \end {pmatrix}+zN_{1'}^{(1)}(z)\right).
\end{align*}
Then,
\[
   \s M\coloneqq \left( z\mapsto \begin{pmatrix} 1 & N_1(z) \\0&1 \end{pmatrix} M(z) \begin{pmatrix} 1 & 0 \\N_{1'}(z)&1 \end{pmatrix}\right) \\ \in \s H^\infty(B(0,\eps);L(H_1\oplus H_2))
\]
and $R(M(0))=R(\s M(0))$, $\s M(0)\geqq d'$ on its range and $\Re \s M'(0)\geqq c'$ on the nullspace of $\s M(0)$, where
\[
  d'\coloneqq  d\left( \sqrt{1+\Abs{N^{(0)}_{13}}+\Abs{N^{(0)}_{13}}^2}\right)^{-2}\text{ and }
  c'\coloneqq  c\left( \sqrt{1+\Abs{N_{24}^{(0)}}+\Abs{N_{24}^{(0)}}^2}\right)^{-2},
\]
with $d>0$ being the constant of positive definiteness of $M(0)$ on its range from Theorem \ref{Sa:2times2yields4times4}.
\end{Le}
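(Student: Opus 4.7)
The analyticity and boundedness of $\s M$ on $B(0,\eps)$ is immediate: $\s M$ is a product of three bounded holomorphic operator-valued functions, so $\s M\in\s H^\infty(B(0,\eps);L(H_1\oplus H_2))$ is automatic. The real content is therefore the determination of $R(\s M(0))$, the lower bound on $\s M(0)|_{R(\s M(0))}$, and the lower bound on $\Re\s M'(0)$ restricted to $N(\s M(0))$. Let $U(z):=\begin{pmatrix}1&N_1(z)\\0&1\end{pmatrix}$ and $V(z):=\begin{pmatrix}1&0\\N_{1'}(z)&1\end{pmatrix}$, so $\s M(z)=U(z)M(z)V(z)$. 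In the $4$-block decomposition $H_1\oplus H_2=G_1\oplus G_2\oplus G_3\oplus G_4$ from Theorem \ref{Sa:2times2yields4times4}, the given form of $N_1(0)$ and $N_{1'}(0)$ shows that $V(0)$ maps $G_1\oplus G_3$ into itself via the $2\times 2$ block $T:=\begin{pmatrix}1&0\\ {N^{(0)}_{13}}^{*}&1\end{pmatrix}$ (acting on $G_1\oplus G_3$), and $U(0)$ maps $G_1\oplus G_3$ into itself via $T^{*}$.

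For the first statement about the range and positivity of $\s M(0)$: a direct computation using the block form of $M(0)$ in Theorem \ref{Sa:2times2yields4times4} yields that $\s M(0)$ is block-diagonal with respect to the splitting $(G_1\oplus G_3)\oplus(G_2\oplus G_4)$, is zero on $G_2\oplus G_4$, and on $G_1\oplus G_3$ equals
\[
\s M(0)|_{G_1\oplus G_3}=T^{*}\bigl(M(0)|_{G_1\oplus G_3}\bigr)T.
\]
Since $T$ is continuously invertible on $G_1\oplus G_3$, this immediately gives $R(\s M(0))=G_1\oplus G_3=R(M(0))$. Selfadjointness of $M(0)|_{G_1\oplus G_3}$ together with its lower bound $d$ on $G_1\oplus G_3$ (Theorem \ref{Sa:2times2yields4times4}) then yield, via Lemma \ref{Le: PAPQAP_nice}, that $\s M(0)\geqq d\,\Abs{T^{-1}}^{-2}$ on its range. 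The inverse of $T$ is again lower triangular with off-diagonal entry $-{N^{(0)}_{13}}^{*}$, so by the bound in \eqref{eq:ineq} (applied to its adjoint, which is upper triangular with entry $-N^{(0)}_{13}$) we get $\Abs{T^{-1}}^2\leqq 1+\Abs{N^{(0)}_{13}}+\Abs{N^{(0)}_{13}}^2$, yielding the claimed $d'$.

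For the bound on $\Re\s M'(0)$: fix $\phi\in N(\s M(0))=G_2\oplus G_4$. From the block forms one computes directly that $V(0)\phi\in G_2\oplus G_4=N(M(0))$ (because the $G_1$- and $G_3$-components vanish) and, crucially, $U(0)^{*}\phi=V(0)\phi$; indeed both are given by $(0,\phi_2,0,{N^{(0)}_{24}}^{*}\phi_2+\phi_4)$, since only the $G_2$-parts of $N_1(0)^{*}$ and $N_{1'}(0)$ survive when applied to an element of $G_2\oplus G_4$ (the off-diagonal entries $N^{(0)}_{14},N^{(0)}_{41}$ only meet zero components). Leibniz gives
\[
\langle\phi,\s M'(0)\phi\rangle=\langle U'(0)^{*}\phi,M(0)V(0)\phi\rangle+\langle U(0)^{*}\phi,M'(0)V(0)\phi\rangle+\langle U(0)^{*}\phi,M(0)V'(0)\phi\rangle.
\]
The first summand vanishes because $V(0)\phi\in N(M(0))$; the third vanishes because $M(0)V'(0)\phi\in R(M(0))=G_1\oplus G_3$ while $U(0)^{*}\phi\in G_2\oplus G_4$ and these subspaces are orthogonal. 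Using $U(0)^{*}\phi=V(0)\phi$ in the middle term, one obtains
\[
\Re\langle\phi,\s M'(0)\phi\rangle=\langle V(0)\phi,\Re M'(0)V(0)\phi\rangle\geqq c\abs{V(0)\phi}^{2},
\]
by Proposition \ref{Prop: M0+M1} applied to the vector $V(0)\phi\in N(M(0))$.

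It remains to bound $\abs{V(0)\phi}$ from below. Since $V(0)$ maps $G_2\oplus G_4$ into itself and acts there as the lower-triangular $2\times 2$ matrix $\begin{pmatrix}1&0\\ {N^{(0)}_{24}}^{*}&1\end{pmatrix}$ (the entries involving $N^{(0)}_{13},N^{(0)}_{41}$ meet the zero components of $\phi$), estimate \eqref{eq:ineq} applied to its adjoint gives the inverse bound $\sqrt{1+\Abs{N^{(0)}_{24}}+\Abs{N^{(0)}_{24}}^{2}}$. Thus $\abs{V(0)\phi}^{2}\geqq \abs{\phi}^{2}/(1+\Abs{N^{(0)}_{24}}+\Abs{N^{(0)}_{24)}}^{2})$, producing the claimed $c'$. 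The main obstacle is the bookkeeping with the four-block decomposition and, in particular, the identity $U(0)^{*}\phi=V(0)\phi$ on $N(M(0))$, which is what makes the off-diagonal cross-terms in the Leibniz expansion cancel and reduces the problem to the hypothesis on $\Re M'(0)$ transported through $V(0)$.
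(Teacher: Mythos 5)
Your proof is correct and follows essentially the same route as the paper's: both reduce the positivity of $\s M(0)$ on its range and of $\Re\s M'(0)$ on the nullspace to the corresponding bounds for $M$, via similarity by the lower-triangular block $T$ (entry ${N^{(0)}_{13}}^*$) on $G_1\oplus G_3$ and by $V(0)|_{G_2\oplus G_4}$ (entry ${N^{(0)}_{24}}^*$) on $G_2\oplus G_4$, then apply Lemma~\ref{Le: PAPQAP_nice} together with the norm bound~\eqref{eq:ineq}; your Leibniz-rule bookkeeping with the identity $U(0)^*\phi=V(0)\phi$ on $G_2\oplus G_4$ is a clean packaging of the paper's direct evaluation of $(1-P)\s M'(0)(1-P)$. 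One small imprecision: $V(0)$ does \emph{not} map $G_1\oplus G_3$ into itself (the entry $N^{(0)}_{41}$ sends $G_1$ into $G_4$); this claim is however harmless, since your direct computation of $U(0)M(0)V(0)$ only needs that $M(0)$ annihilates $G_4$ so the $N^{(0)}_{41}$ contribution drops out --- exactly the cancellation the paper records by replacing $N_1(0)$, $N_{1'}(0)$ with their $N^{(0)}_{13}$-only truncations --- and the block-diagonal identity $\s M(0)|_{G_1\oplus G_3}=T^*M(0)|_{G_1\oplus G_3}T$ with $\s M(0)|_{G_2\oplus G_4}=0$ holds as you state.
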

\begin{proof}
 Using the representation of $M$ given in Theorem \ref{Sa:2times2yields4times4}, we compute $\s M(0)$:
\begin{align*}
   \s M(0)& = \begin{pmatrix} 1 & \begin{pmatrix} N^{(0)}_{13} & N^{(0)}_{14} \\ 0  & N^{(0)}_{24} \end {pmatrix} \\0&1 \end{pmatrix} \begin{pmatrix} M^{(0)}_{11} & 0 & M^{(0)}_{13} &0 \\ 0 & 0 & 0&0 \\ M^{(0)}_{31} & 0 & M^{(0)}_{33}&0 \\
  0&0&0&0 \end{pmatrix}\begin{pmatrix} 1 & 0 \\ \begin{pmatrix} {N^{(0)}_{13}}^* & 0 \\ N^{(0)}_{41}  & {N^{(0)}_{42}}^* \end {pmatrix}  &1 \end{pmatrix}\\
& = \begin{pmatrix} 1 & \begin{pmatrix} N^{(0)}_{13} & 0 \\ 0  & 0 \end {pmatrix}\\0&1 \end{pmatrix} \begin{pmatrix} M^{(0)}_{11} & 0 & M^{(0)}_{13} &0 \\ 0 & 0 & 0&0 \\ M^{(0)}_{31} & 0 & M^{(0)}_{33}&0 \\
  0&0&0&0 \end{pmatrix} \begin{pmatrix} 1 & 0 \\ \begin{pmatrix} {N^{(0)}_{13}}^* & 0 \\ 0 & 0 \end {pmatrix} &1 \end{pmatrix}.
\end{align*}Hence, $\s M(0)$ is similar to a positive definite operator. Moreover, the similarity transformation commutes with the projector 
\[P\coloneqq \begin{pmatrix}  1&0&0&0\\0&0&0&0\\0&0&1&0\\0&0&0&0  \end{pmatrix}.\]Thus, $M(0)$ and $\s M(0)$ have the same range and are both strictly positive on it. Indeed, $M(0)$ is strictly positive on $G_1\oplus \{0\} \oplus G_3 \oplus \{0\}$ and since the similarity transformation is a bijection on $G_1\oplus \{0\} \oplus G_3 \oplus \{0\}$, $\s M(0)$ is a bijection on $G_1 \oplus \{0\} \oplus G_3 \oplus \{0\}$ as well. In view of Lemma \ref{Le: PAPQAP_nice} and Inequality \eqref{eq:ineq}, we deduce $\s M(0)\geqq d'$ on its range. Next, consider $(1-P)\s M'(0) (1-P)$. For this purpose, we compute
\begin{align*}
 &(1-P)\begin{pmatrix} 1 & N_1(z) \\0&1 \end{pmatrix} \\
&= \begin{pmatrix}  0&0&0&0\\0&1&0&0\\0&0&0&0\\0&0&0&1  \end{pmatrix}\left( \begin{pmatrix} \begin{pmatrix} 1&0\\ 0&1\end{pmatrix} & \begin{pmatrix} N^{(0)}_{13} & N^{(0)}_{14} \\ 0  & N^{(0)}_{24} \end {pmatrix} \\ \begin{pmatrix} 0&0\\ 0&0\end{pmatrix} &\begin{pmatrix} 1&0 \\ 0&1\end{pmatrix} \end{pmatrix}+\begin{pmatrix} \begin{pmatrix} 0&0\\ 0&0\end{pmatrix} & z N_1^{(1)}(z) \\ \begin{pmatrix} 0&0\\ 0&0\end{pmatrix} &\begin{pmatrix} 0&0\\ 0&0\end{pmatrix}\end{pmatrix}\right)\\ 
  &= \begin{pmatrix} \begin{pmatrix} 0&0\\ 0&1\end{pmatrix} & \begin{pmatrix}0 & 0 \\ 0  & N^{(0)}_{24} \end {pmatrix}\\ \begin{pmatrix} 0&0\\ 0&0\end{pmatrix} &\begin{pmatrix} 0&0\\ 0&1\end{pmatrix} \end{pmatrix}+\begin{pmatrix} \begin{pmatrix} 0&0\\ 0&0\end{pmatrix} & z\begin{pmatrix} 0 & 0 \\ N^{(1)}_{1,23}(z) &N^{(1)}_{1,24}(z) \end {pmatrix} \\ \begin{pmatrix} 0&0\\ 0&0\end{pmatrix} &\begin{pmatrix} 0&0 \\ 0&0\end{pmatrix} \end{pmatrix}
\end{align*}
with suitable $N^{(1)}_{1,2k}\in \s H^\infty(B(0,\eps); L(G_k,G_2))$ $(k\in\{3,4\})$ and, similarly, we find
\begin{align*}
 \begin{pmatrix} 1 & 0 \\N_{1'}(z)&1 \end{pmatrix}(1-P)&=\begin{pmatrix} \begin{pmatrix} 0&0\\ 0&1\end{pmatrix} &  \begin{pmatrix} 0&0\\ 0&0\end{pmatrix} \\ \begin{pmatrix}0 & 0 \\ 0  & {N^{(0)}_{24}}^* \end {pmatrix} &\begin{pmatrix} 0&0\\ 0&1\end{pmatrix} \end{pmatrix}
  +\begin{pmatrix} \begin{pmatrix} 0&0\\ 0&0\end{pmatrix} &  \begin{pmatrix} 0&0\\ 0&0\end{pmatrix} \\ z\begin{pmatrix} 0 & N^{(1)}_{1',32}(z) \\ 0  &N^{(1)}_{1',42}(z) \end {pmatrix} &\begin{pmatrix} 0&0 \\ 0&0\end{pmatrix} \end{pmatrix},
\end{align*}
for suitable $N^{(1)}_{1',k2}\in \s H^\infty(B(0,\eps); L(G_2,G_k))$ $(k\in\{3,4\})$.
We obtain
{\footnotesize
\begin{align*}
 &(1-P)\s M'(0) (1-P)\\
 &=  \begin{pmatrix} \begin{pmatrix} 0&0\\ 0&1\end{pmatrix} & \begin{pmatrix}0 & 0 \\ 0  & N^{(0)}_{24} \end {pmatrix}\\ \begin{pmatrix} 0&0\\ 0&0\end{pmatrix} &\begin{pmatrix} 0&0\\ 0&1\end{pmatrix} \end{pmatrix}\begin{pmatrix} M^{(1)}_{11}(0) & M^{(1)}_{12}(0) & M^{(1)}_{13}(0) & M^{(1)}_{14}(0) \\ M^{(1)}_{21}(0) & M^{(1)}_{22}(0) & M^{(1)}_{23}(0) & M^{(1)}_{24}(0) \\ M^{(1)}_{31}(0) & M^{(1)}_{32}(0) & M^{(1)}_{33}(0) & M^{(1)}_{34}(0) \\
                M^{(1)}_{41}(0) & M^{(1)}_{42}(0) & M^{(1)}_{43}(0) & M^{(1)}_{44}(0)
\end{pmatrix}\begin{pmatrix} \begin{pmatrix} 0&0\\ 0&1\end{pmatrix} &  \begin{pmatrix} 0&0\\ 0&0\end{pmatrix} \\ \begin{pmatrix}0 & 0 \\ 0  & {N^{(0)}_{24}}^* \end {pmatrix} &\begin{pmatrix} 0&0\\ 0&1\end{pmatrix} \end{pmatrix}\\
&=  \begin{pmatrix} \begin{pmatrix} 0&0\\ 0&1\end{pmatrix} & \begin{pmatrix}0 & 0 \\ 0  & N^{(0)}_{24} \end {pmatrix}\\ \begin{pmatrix} 0&0\\ 0&0\end{pmatrix} &\begin{pmatrix} 0&0\\ 0&1\end{pmatrix} \end{pmatrix}\begin{pmatrix}0 & 0 & 0 & 0 \\ 0 & M^{(1)}_{22}(0) & 0 & M^{(1)}_{24}(0) \\ 0 & 0 & 0 & 0 \\
               0 & M^{(1)}_{42}(0) & 0 & M^{(1)}_{44}(0)
\end{pmatrix}\begin{pmatrix} \begin{pmatrix} 0&0\\ 0&1\end{pmatrix} &  \begin{pmatrix} 0&0\\ 0&0\end{pmatrix} \\ \begin{pmatrix}0 & 0 \\ 0  & {N^{(0)}_{24}}^* \end {pmatrix} &\begin{pmatrix} 0&0\\ 0&1\end{pmatrix} \end{pmatrix}
\end{align*}}Now, by Lemma \ref{Le: PAPQAP_nice} and Inequality \eqref{eq:ineq}, we see $\Re \s M'(0)\geqq c'$ on the nullspace of $\s M(0)$. 
\end{proof}

\begin{rems}\label{Le:asymp_exp_gen_hsf} Consider the following situation where we apply Lemma \ref{Le:sim_gen_hsf}. Let $G_j$ be a Hilbert space for $j\in\{1,2,3,4\}$.
 Let 
\begin{align*}
 N_{2}&\coloneqq  \left(z\mapsto \begin{pmatrix} N^{(0)}_{13} & 0 \\ 0  & 0 \end {pmatrix} + z \begin{pmatrix} N^{(1)}_{13}(z) & N^{(1)}_{14}(z) \\
                                                                                                          N^{(1)}_{23}(z) & N^{(1)}_{24}(z)
                                                                                          \end{pmatrix}\right)\in \s H^{\infty}(B(0,\eps);L(G_3\oplus G_4,G_1\oplus G_2))\\
 N_{2'}&\coloneqq  \left(z\mapsto \begin{pmatrix} N^{(0)}_{31} & 0 \\ 0  & 0 \end {pmatrix} + z \begin{pmatrix}  N^{(1)}_{31}(z) & N^{(1)}_{32}(z)\\
                                                                                                          N^{(1)}_{41}(z) & N^{(1)}_{42}(z)
                                                                                          \end{pmatrix}\right)\in \s H^{\infty}(B(0,\eps);L(G_1\oplus G_2,G_3\oplus G_4))
\end{align*}
and assume ${N^{(0)}_{13}}^*=N^{(0)}_{31}\in L(G_3,G_1)$. Moreover, let
\[
 N_3\coloneqq  \left(z\mapsto \begin{pmatrix} N^{(0)}_{33}+zN^{(1)}_{33}(z) & N_{34}^{(1)}(z) \\ N_{43}^{(1)}(z)  & z^{-1}N^{(0)}_{44}+N^{(1)}_{44}(z) \end {pmatrix} \right)
\]
with ${N^{(0)}_{33}}^*=N^{(0)}_{33}\in L(G_3)$, $N^{(0)}_{44}\in L(G_4)$, $N^{(1)}_{jk}\in \s H^\infty(B(0,\eps);L(G_k,G_j))$, $j,k\in \{3,4\}$. 
Then it is easy to see that $N_2(\cdot)N_3(\cdot) \in \s H^{\infty}(B(0,\eps);L(G_3\oplus G_4,G_1\oplus G_2))$ and $N_3(\cdot)N_{2'}(\cdot) \in \s H^{\infty}(B(0,\eps);L(G_1\oplus G_2,G_3\oplus G_4))$ and the following expansions hold
\begin{align*}
 N_2(z)N_3(z)&= \begin{pmatrix} N^{(0)}_{13}N^{(0)}_{33} & N^{(0)}_{13}N^{(1)}_{34}(0) \\ 0  & 0 \end {pmatrix}+\begin{pmatrix}0 & N^{(1)}_{14}(0)N^{(0)}_{44} \\ 0  & N^{(1)}_{24}(0)N^{(0)}_{44} \end {pmatrix}+O(z)
\end{align*}
and
\begin{align*}
 N_3(z)N_{2'}(z)&= \begin{pmatrix} N^{(0)}_{33}N^{(0)}_{31} & 0 \\ N^{(1)}_{43}(0)N^{(0)}_{31}  & 0 \end {pmatrix}+\begin{pmatrix}0 & 0 \\ N^{(0)}_{44}N^{(1)}_{41}(0)  &N^{(0)}_{44} N^{(1)}_{42}(0) \end {pmatrix}+O(z)
\end{align*}
 for $z\to 0$.
Now, let $M$ and the $G_j$'s be as in Lemma \ref{Le:sim_gen_hsf}. Assume the following compatibility condition
\[
   \left(N^{(0)}_{44}N^{(1)}_{42}(0)\right)^*=N^{(1)}_{24}(0)N^{(0)}_{44}.
\]
Then $N_1\coloneqq N_2N_3$ and $N_{1'}\coloneqq N_3N_{2'}$ satisfy the assumptions from Lemma \ref{Le:sim_gen_hsf}.
\end{rems}
We now turn to the analysis of inverses of material laws. Since we need to estimate the norm bounds of these inverses, we observe that, for a Hilbert space $H$ and a continuous linear operator $B\in L(H)$ satisfying $\Re B\geqq h$ for some $h>0$, $B^{-1}\in L(H)$. Moreover, using the Cauchy-Schwarz inequality, we deduce the estimate 
\begin{equation}\label{eq:alm_triv}
  \Abs{B^{-1}}\leqq 1/h.
\end{equation}
 Another consequence of $\Re B\geqq h$ is 
\begin{equation}\label{eq:alm_alm_triv}
 \Re B^{-1}\geqq h/(\Abs{B}^2). 
\end{equation}
 
\begin{Le}\label{Le: prototype}Let $H_1,H_2$ be Hilbert spaces, $d,c,\eps>0$. Let $L(H_1)\ni M_{11}^{(0)}={M_{11}^{(0)}}^*\geqq d$. Moreover, let $M_{jk}^{(1)}\in \s H^{\infty}(B(0,\eps); L(H_k,H_j))$ with $\Re M_{22}^{(1)}(0)\geqq c$. Define \[M_{I}\coloneqq  \left(B(0,\eps)\ni z\mapsto \begin{pmatrix} M^{(0)}_{11} & 0 \\ 0 & 0 \end {pmatrix} + z \begin{pmatrix} M^{(1)}_{11}(z) & M^{(1)}_{12}(z) \\
                                                                                                          M^{(1)}_{21}(z) & M^{(1)}_{22}(z)
                                                                                          \end{pmatrix}\right).
\]
Then there exists $\eps'>0$ depending on $\eps,\Abs{M_I}_\infty,c$ and $d$ such that, for $z\in B(0,\eps')\setminus \{0\}$,
{\footnotesize
\begin{multline*}
   M_{I}(z)^{-1} \\= \begin{pmatrix}
                        M_{121}(z) &\ -M_{121}(z)M_{12}^{(1)}(z)M_{22}^{(1)}(z)^{-1} \\ 
                       -M_{22}^{(1)}(z)^{-1}M_{21}^{(1)}(z)M_{121}(z) & \ M_{22}^{(1)}(z)^{-1}M_{21}^{(1)}(z)M_{121}(z)M_{12}^{(1)}(z)M_{22}^{(1)}(z)^{-1} + z^{-1}M_{22}^{(1)}(z)^{-1}  
                     \end{pmatrix},
\end{multline*}} where
\begin{align*}
   M_{121}(z)&\coloneqq  \left( M_{11}^{(0)}+ z \left(M_{11}^{(1)}(z)- M_{12}^{(1)}(z)M_{22}^{(1)}(z)^{-1}M_{21}^{(1)}(z)\right)\right)^{-1}\\
             &=\left(M_{11}^{(0)}\right)^{-1}+O(z)\text{ and }\\ 
  M_{22}^{(1)}(z)^{-1}&= M_{22}^{(1)}(0)^{-1}+ O(z)
\end{align*}
On the other hand, for $\hat M_{jk}\in \s H^\infty(B(0,\eps');L(H_{k},H_j))$ with $\Re \hat M_{22}^{(1)}(0)\geqq c$ and  
\[\hat M_{I}\coloneqq  \left(B(0,\eps')\setminus\{0\}\ni z\mapsto \begin{pmatrix} M^{(0)}_{11}+ z\hat M^{(1)}_{11}(z)& \hat M^{(1)}_{12}(z) \\ \hat M^{(1)}_{21}(z) & z^{-1}\hat M_{22}^{(1)}(z)\end {pmatrix}\right)
\] there exists $\eps''>0$ depending on $c,d,\Abs{\hat M_{jk}}_\infty(j,k\in\{1,2\})$ such that, for all $z\in B(0,\eps'')$,
{\footnotesize\begin{multline*}
  \hat M_{I}(z)^{-1}\\ = \begin{pmatrix}
                        \hat M_{121}(z) & -z\hat M_{121}(z)\hat M_{12}^{(1)}(z)\hat M_{22}^{(1)}(z)^{-1} \\ 
                       -z\hat M_{22}^{(1)}(z)^{-1}\hat M_{21}^{(1)}(z)\hat M_{121}(z) & z^2\hat M_{22}^{(1)}(z)^{-1}\hat M_{21}^{(1)}(z)\hat M_{121}(z)M_{12}^{(1)}(z)\hat M_{22}^{(1)}(z)^{-1} + z\hat M_{22}^{(1)}(z)^{-1}.  
                     \end{pmatrix},
\end{multline*}}
where
\begin{align*}
   \hat M_{121}(z)&\coloneqq  \left( M_{11}^{(0)}+ z\left(\hat M_{11}^{(1)}(z)- \hat M_{12}^{(1)}(z)z\hat M_{22}^{(1)}(z)^{-1}\hat M_{21}^{(1)}(z)\right)\right)^{-1}\\
             &=\left(M_{11}^{(0)}\right)^{-1}+O(z)\text{ and }
  \hat M_{22}^{(1)}(z)^{-1}&= \hat M_{22}^{(1)}(0)^{-1}+ O(z).
\end{align*}
In particular, $M_{I'}\in \s H^{\infty}(B(0,\eps'');L(H_1\oplus H_2))$.
\end{Le}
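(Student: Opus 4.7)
\textbf{Proof plan for Lemma \ref{Le: prototype}.} The whole statement is essentially two bookkeeping exercises around the block inversion formula recalled just before the lemma, namely
\[
   \begin{pmatrix} \alpha & \beta \\ \gamma & \delta \end{pmatrix}^{-1}
   =\begin{pmatrix} A & -A\beta\delta^{-1} \\ -\delta^{-1}\gamma A & \delta^{-1}\gamma A\beta \delta^{-1}+\delta^{-1}\end{pmatrix},\qquad A:=(\alpha-\beta\delta^{-1}\gamma)^{-1}.
\]
Applied to $M_I$ with $\alpha=M_{11}^{(0)}+zM_{11}^{(1)}(z)$, $\beta=zM_{12}^{(1)}(z)$, $\gamma=zM_{21}^{(1)}(z)$, $\delta=zM_{22}^{(1)}(z)$, one power of $z$ in each of $\beta,\gamma$ cancels against the $z^{-1}$ in $\delta^{-1}=z^{-1}M_{22}^{(1)}(z)^{-1}$, leaving exactly the asserted shape with the lone $z^{-1}$ surviving in the $(2,2)$-block. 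For $\hat M_I$ the same formula is applied with $\delta=z^{-1}\hat M_{22}^{(1)}(z)$, so $\delta^{-1}=z\hat M_{22}^{(1)}(z)^{-1}$ and now every entry acquires a non-negative power of $z$; this will automatically give the final claim $\hat M_I^{-1}\in \s H^\infty(B(0,\eps'');L(H_1\oplus H_2))$.

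The content of the proof is therefore to exhibit the two inverses entering the block formula and to control the radius on which they exist. For the $(2,2)$-block: since $\Re M_{22}^{(1)}(0)\geqq c$, Lemma \ref{Le: CIF} provides a disc on which $\Abs{M_{22}^{(1)}(z)-M_{22}^{(1)}(0)}\leqq c/2$, hence $\Re M_{22}^{(1)}(z)\geqq c/2$, so \eqref{eq:alm_triv} yields $\Abs{M_{22}^{(1)}(z)^{-1}}\leqq 2/c$ there. Analyticity together with the expansion $M_{22}^{(1)}(z)^{-1}=M_{22}^{(1)}(0)^{-1}+O(z)$ follows at once from the resolvent identity
\[
  M_{22}^{(1)}(z)^{-1}-M_{22}^{(1)}(0)^{-1}=-M_{22}^{(1)}(z)^{-1}\bigl(M_{22}^{(1)}(z)-M_{22}^{(1)}(0)\bigr)M_{22}^{(1)}(0)^{-1}.
\]

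For the Schur complement: $M_{11}^{(0)}+z\bigl(M_{11}^{(1)}(z)-M_{12}^{(1)}(z)M_{22}^{(1)}(z)^{-1}M_{21}^{(1)}(z)\bigr)$ is a perturbation of order $z$ of the strictly positive operator $M_{11}^{(0)}\geqq d$. Factoring out $(M_{11}^{(0)})^{-1}$ and using a Neumann series produces invertibility for $\abs z$ below an explicit threshold $\eps'$ depending only on $d$, $c$ and $\Abs{M_I}_\infty$ (the latter controlling the Taylor coefficients of the $M_{jk}^{(1)}$ via Lemma \ref{Le: CIF}), and gives $M_{121}(z)=(M_{11}^{(0)})^{-1}+O(z)$ by the same resolvent identity as above. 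Plugging these two inverses into the block formula yields the stated expression for $M_I(z)^{-1}$ on $B(0,\eps')\setminus\{0\}$; the second part of the lemma is identical, the only difference being that the $z$-scaling in $\delta$ is inverted, so that the same Neumann-series bookkeeping applies word for word (producing the threshold $\eps''$ as a function of $c,d,\Abs{\hat M_{jk}}_\infty$) and delivers analyticity of $\hat M_I^{-1}$ at $0$.

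The only real bookkeeping hurdle is tracking the quantitative threshold $\eps',\eps''$; but once one isolates the two Neumann-series estimates above and notes that all extra factors of $z$ from $\beta=zM_{12}^{(1)}$ etc.\ only help shrinking (rather than enlarging) the required radius, this reduces to plugging the bounds of Lemma \ref{Le: CIF} into the two elementary estimates $\Abs{(1-zT(z))^{-1}}\leqq 2$ and $\Abs{B^{-1}}\leqq 1/h$ from \eqref{eq:alm_triv}. No compactness or spectral argument beyond these is needed.
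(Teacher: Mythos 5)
Your proposal is correct and follows essentially the same route as the paper: the paper's own proof is a terse three-sentence note stating that the explicit block-inverse formulas can be "verified immediately," that the asymptotic expansions are "straightforward applications of the Neumann series expansion," and that the radii $\eps',\eps''$ come from Lemma~\ref{Le: CIF} together with Inequality~\eqref{eq:alm_triv}. You have simply unpacked those three sentences — identifying the correct $\alpha,\beta,\gamma,\delta$ in the Schur complement/block-inversion formula recalled just before the lemma, controlling $M_{22}^{(1)}(z)^{-1}$ via the uniform accretivity and \eqref{eq:alm_triv}, handling $M_{121}(z)$ by a Neumann series perturbation of $M_{11}^{(0)}\geqq d$, and tracking the radii through Lemma~\ref{Le: CIF} — so there is nothing to compare beyond level of detail.

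One small remark: you correctly read the last line "$M_{I'}\in\s H^\infty(B(0,\eps'');L(H_1\oplus H_2))$" as a misprint for $\hat M_I^{-1}$, which your argument delivers since every entry in the $\hat M_I$ case picks up a nonnegative power of $z$. You may also note in passing that the printed definition of $\hat M_{121}$ in the lemma contains a spurious extra factor of $z$ (the Schur complement from $\delta=z^{-1}\hat M_{22}^{(1)}(z)$ gives $M_{11}^{(0)}+z\bigl(\hat M_{11}^{(1)}(z)-\hat M_{12}^{(1)}(z)\hat M_{22}^{(1)}(z)^{-1}\hat M_{21}^{(1)}(z)\bigr)$, not with an inner $z$); your computation implicitly produces the right expression, so this does not affect your proof.
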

\begin{proof}
 The expressions for the inverses of $M_{I}$ and $M_{I'}$ can be verified immediately. The asymptotic expansions are straightforward applications of the Neumann series expansion. The respective convergence radii can be estimated in terms of $\eps,\Abs{M_I}_\infty,c$ and $d$ or $\eps,\Abs{\hat M_{jk}}_\infty(j,k\in\{1,2\}),c$ and $d$ by Lemma \ref{Le: CIF} and Inequality \eqref{eq:alm_triv}. 
\end{proof}

\begin{Sa}\label{Th:final} Let $H_1,H_2$ be separable Hilbert spaces, $c,d,\eps,r>0$. Let $(N_n)_n=\left(\left(\begin{smallmatrix}N_{11,n}&N_{12,n}\\N_{21,n}&N_{22,n}  \end{smallmatrix}\right)\right)_n$ be a bounded sequence in \[\s H^\infty(B(0,\eps);L(H_1\oplus H_2))\cap \s H^{\infty,c}(B(r,r);L(H_1\oplus H_2))\] and $M\in \s H^\infty(B(0,\eps);L(H_1\oplus H_2))\cap \s H^{\infty,c}(B(r,r);L(H_1\oplus H_2))$. Assume that for all $n\in\N$ we have $R(M(0))=R(N_n(0))$ and $M(0),N_n(0)\geqq d$ on $R(M(0))$. Denote by $q_j:H_j \to R(\pi_j^*) \cap N(M(0))$ $(j\in\{1,2\})$ the canonical ortho-projections. Assume for all $n\in \N$ the compatibility condition \[\left(\left(q_2N_{22,n}'(0)q_2^*\right)^{-1}q_2N_{21,n}'(0)q_1^{*}\right)^*=q_1N_{12,n}'(0)q_2^{*}\left(q_2N_{22,n}'(0)q_2^*\right)^{-1}.\]
Then there exist $\eps',r',c'>0$ depending on $\eps,c,r,\Abs{M}_\infty,\sup\{\Abs{N_n}_\infty;n\in\N\},d$ such that
\begin{align*}
   (N_{1,n})_n\coloneqq &\left(\left(z\mapsto  N_{12,n}(z)N_{22,n}(z)^{-1}\right)\right)_n \\
   (N_{1',n})_n\coloneqq &\left(\left(z\mapsto N_{22,n}(z)^{-1}N_{21,n}(z)\right)\right)_n
\end{align*}
are bounded in $\s H^\infty(B(0,\eps');L(H_2,H_1))$ and $ \s H^\infty(B(0,\eps');L(H_1,H_2))$, respectively. Moreover, denoting by $M_{1}\in \s H_{\textnormal{w}}^\infty(B(0,\eps');L(H_2,H_1))$ and $M_{1'}\in \s H_{\textnormal{w}}^\infty(B(0,\eps');L(H_1,H_2))$ the respective limits of $(N_{1,n_k})_k$ and $(N_{1',n_k})_k$ for a strictly monotone sequence of positive integers $(n_k)_k$, we have
\begin{multline*}
   \s M\coloneqq \left( z\mapsto \begin{pmatrix} 1 &\pm M_1(z) \\0&1 \end{pmatrix} M(z) \begin{pmatrix} 1 & 0 \\ \pm M_{1'}(z)&1 \end{pmatrix}\right) \\ \in \s H^\infty(B(0,\eps');L(H_1\oplus H_2))\cap \s H^{\infty,c'}(B(r',r');L(H_1\oplus H_2)),
\end{multline*}
and $R(\s M(0))=R(M(0))$. 
\end{Sa}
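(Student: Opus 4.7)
The plan is to chain together the block-inversion formula of Lemma \ref{Le: prototype}, the sequential weak compactness Theorem \ref{Thm: seq_comp}, and the similarity transformation Lemma \ref{Le:sim_gen_hsf}, then upgrade the pointwise positivity estimates at $z=0$ via Remark \ref{rem:conv}.

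First, I would apply Theorem \ref{Sa:2times2yields4times4} to $M$ to split $H_1\oplus H_2$ as $(G_1\oplus G_2)\oplus(G_3\oplus G_4)$. The range assumption $R(N_n(0))=R(M(0))$ means the same four-space decomposition works for every member of the sequence, and in it each $N_{22,n}$ has exactly the form of the operator $M_I$ in Lemma \ref{Le: prototype}: zeroth-order term $\mathrm{diag}(N^{(0)}_{33,n},0)$ with $N^{(0)}_{33,n}\geqq d$, and $\Re(q_2 N_{22,n}'(0)q_2^*)\geqq c$ thanks to Proposition \ref{Prop: M0+M1}. The first part of Lemma \ref{Le: prototype} then produces, uniformly in $n$ (the radius depending only on $\eps,c,d,\sup_n\Abs{N_n}_\infty$), a number $\eps_1>0$ such that $N_{22,n}(z)^{-1}$ exists on $B(0,\eps_1)\setminus\{0\}$ with an explicit Laurent expansion whose only singular term is $z^{-1}N^{(1)}_{44,n}(z)^{-1}$ in the $(G_4,G_4)$-block.

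Next, because Theorem \ref{Sa:2times2yields4times4} combined with Lemma \ref{Le: Off=0} forces $N_{12,n}(0)$ and $N_{21,n}(0)$ to vanish on every sub-block touching $G_4$, a direct block-by-block multiplication shows that the $z^{-1}$ singular contribution is killed by compensating $O(z)$ factors. Hence $N_{1,n}(z):=N_{12,n}(z)N_{22,n}(z)^{-1}$ and $N_{1',n}(z):=N_{22,n}(z)^{-1}N_{21,n}(z)$ extend holomorphically to $B(0,\eps')$ with uniform operator-norm bound in $n$. Theorem \ref{Thm: seq_comp} applied to both sequences then provides a common subsequence $(n_k)_k$ along which $N_{1,n_k}\to M_1$ and $N_{1',n_k}\to M_{1'}$ in the weak Hardy topology.

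To finish, I would read off the block structure of $M_1(0)$ and $M_{1'}(0)$ from the non-singular terms of the Laurent expansion: $M_1(0)$ has the pattern $\bigl(\begin{smallmatrix} a & b \\ 0 & c\end{smallmatrix}\bigr)$ and $M_{1'}(0)$ the pattern $\bigl(\begin{smallmatrix} a^* & 0 \\ b' & c^*\end{smallmatrix}\bigr)$ on $(G_1\oplus G_2,G_3\oplus G_4)$, the symmetry relation on the $(G_1,G_3)$-entries following from the selfadjointness of $N_n(0)$ and of $N^{(0)}_{33,n_k}$. Under the identifications $q_2N_{22,n}'(0)q_2^*=N^{(1)}_{44,n}(0)$, $q_2N_{21,n}'(0)q_1^*=N^{(1)}_{42,n}(0)$, $q_1N_{12,n}'(0)q_2^*=N^{(1)}_{24,n}(0)$, the compatibility assumption of the theorem is exactly the compatibility condition of Remark \ref{Le:asymp_exp_gen_hsf} applied to $N_2:=N_{12,n_k}$, $N_{2'}:=N_{21,n_k}$, $N_3:=N_{22,n_k}(\cdot)^{-1}$. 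Weak limits preserve this structural information, so $\pm M_1$ and $\pm M_{1'}$ (corresponding to the two signs in the statement) satisfy the hypotheses of Lemma \ref{Le:sim_gen_hsf}. That lemma delivers $\s M\in\s H^\infty(B(0,\eps');L(H_1\oplus H_2))$ with $R(\s M(0))=R(M(0))$, $\s M(0)\geqq d'$ on this range, and $\Re \s M'(0)\geqq c'$ on $N(\s M(0))$; Remark \ref{rem:conv} then converts these two pointwise estimates into the desired membership $\s M\in\s H^{\infty,c''}(B(r'',r'');L(H_1\oplus H_2))$ for suitable $r'',c''>0$. The main obstacle is the bookkeeping in the block multiplications of the second paragraph: one must verify precisely which blocks annihilate the $z^{-1}$ singularity, and then recognize that the compatibility condition assumed in the theorem is exactly what is required to apply Remark \ref{Le:asymp_exp_gen_hsf}.
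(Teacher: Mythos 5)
Your proposal is correct and follows essentially the same chain of lemmas as the paper's proof: Theorem \ref{Sa:2times2yields4times4} for the four-space decomposition, Lemma \ref{Le: prototype} for the Laurent expansion of $N_{22,n}(\cdot)^{-1}$, Remark \ref{Le:asymp_exp_gen_hsf} to recognize that $N_{1,n}$, $N_{1',n}$ fit the hypotheses of Lemma \ref{Le:sim_gen_hsf} and that the stated compatibility condition is exactly the one required there, passage to the limit via the convergence of power-series coefficients, and finally Remark \ref{rem:conv} to upgrade the pointwise positivity at $0$ to the $(c')$-material-law property. The only cosmetic difference is that you invoke Theorem \ref{Thm: seq_comp} to produce the convergent subsequence, whereas the statement already takes $(n_k)_k$ as given; the paper also cites Lemma \ref{Le: Conv_of_coeff} plus continuity of the adjoint for the step you describe as ``weak limits preserve this structural information.''
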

\begin{proof} In the following, we use Hilbert spaces $G_j$, $j\in \{1,2,3,4\}$ as in Theorem \ref{Sa:2times2yields4times4} and represent $N_n$ for $n\in\N$ using bounded operators $N_{jk,n}^{(0)}$, $j,k\in\{1,3\}$, and Hardy space functions $N_{jk,n}^{(1)}$,
 $j,k\in\{1,2,3,4\}$, as in Theorem \ref{Sa:2times2yields4times4}. From Lemma \ref{Le: prototype} we have an explicit expression for $N_{22,n}(z)^{-1}$, namely
\[
    N_{22,n}(z)^{-1} =\begin{pmatrix} \left(N_{33,n}^{(0)}\right)^{-1}+O(z) & O(1) \\ O(1) & z^{-1} N_{44,n}^{(1)}(0)^{-1}+O(1)\end{pmatrix} \text{ for }z\to 0.
\]
Moreover, we have an estimate for the radius of convergence $\eps'$ for the Neumann expansion involved in this expression in terms of $\sup\{\Abs{N_n}_\infty;n\in\N\}, d,c,\eps$. In particular, $z\mapsto N_{22,n}(z)^{-1}$ satisfies the assumptions on $N_3$ in Remark \ref{Le:asymp_exp_gen_hsf} (note that $\left(\left(N_{33,n}^{(0)}\right)^{-1}\right)^*=\left(N_{33,n}^{(0)}\right)^{-1}$, by Theorem \ref{Sa:2times2yields4times4}). Moreover, using Theorem \ref{Sa:2times2yields4times4}, we deduce that $N_{12,n}$ and $N_{21,n}$ satisfy the assumptions on $N_2$ and $N_{2'}$ in Remark \ref{Le:asymp_exp_gen_hsf}. Indeed, we have
\begin{align*}
 N_{12,n}(z)&\coloneqq  \left(z\mapsto \begin{pmatrix} N^{(0)}_{13,n} & 0 \\ 0  & 0 \end {pmatrix} + z \begin{pmatrix} N^{(1)}_{13,n}(z) & N^{(1)}_{14,n}(z) \\
                                                                                                          N^{(1)}_{23,n}(z) & N^{(1)}_{24,n}(z)
                                                                                          \end{pmatrix}\right)\\
&\quad\quad\quad\quad\quad\quad\in \s H^{\infty}(B(0,\eps);L(G_3\oplus G_4,G_1\oplus G_2))\\
 N_{21,n}(z)&\coloneqq  \left(z\mapsto \begin{pmatrix} N^{(0)}_{31,n} & 0 \\ 0  & 0 \end {pmatrix} + z \begin{pmatrix}  N^{(1)}_{31,n}(z) & N^{(1)}_{32,n}(z)\\
                                                                                                          N^{(1)}_{41,n}(z) & N^{(1)}_{42,n}(z)
                                                                                          \end{pmatrix}\right) \\
&\quad\quad\quad\quad\quad\quad \in \s H^{\infty}(B(0,\eps);L(G_1\oplus G_2,G_3\oplus G_4))
\end{align*}
with $\left( N^{(0)}_{13,n}\right)^*=N^{(0)}_{31,n}$ by Theorem \ref{Sa:2times2yields4times4}.

Moreover, Remark \ref{Le:asymp_exp_gen_hsf} shows that $N_{1,n}$ and $N_{1',n}$ satisfy the assumptions imposed on $N_1$ and $N_{1'}$ in Lemma \ref{Le:sim_gen_hsf}. More precisely, we have the expansions
\begin{align*}
   N_{1,n}(z)&=\begin{pmatrix}
             N_{13,n}^{(0)}(N_{33,n}^{(0)})^{-1} & \hat N_{14,n} \\ 0 & N_{24,n}^{(1)}(0) N_{44,n}^{(1)}(0)^{-1}
            \end{pmatrix}+O(z)\\
    N_{1',n}(z)&=\begin{pmatrix}
              (N_{33,n}^{(0)})^{-1}N_{31,n}^{(0)} & 0\\ \hat N_{41,n}  & N_{44,n}^{(1)}(0)^{-1} N_{42}^{(1)}(0)
             \end{pmatrix}+O(z)
\end{align*}for suitable continuous linear operators $\hat N_{14,n},\hat N_{41,n}$. We deduce that \[
\left(N_{13,n}^{(0)}(N_{33,n}^{(0)})^{-1}\right)^*=(N_{33,n}^{(0)})^{-1}N_{31,n}^{(0)}.\]
Moreover, the compatibility condition is precisely
\[
    \left( N_{24,n}^{(1)}(0)N_{44,n}^{(1)}(0)^{-1}\right)^* = N_{44,n}^{(1)}(0)^{-1} N_{42,n}^{(1)}(0). 
\]
Lemma \ref{Le: Conv_of_coeff} together with the fact that computing the adjoint is a continuous process in the weak operator topology ensures that $M_1$ and $M_{1'}$ satisfy the assumptions imposed on $N_1$ and $N_{1'}$ in Lemma \ref{Le:sim_gen_hsf}. To estimate the norm bounds of $M_1$ and $M_{1'}$ in terms of $d,c,\sup\{\Abs{N_n}_\infty;n\in\N\}$ and $\eps$, we use Lemma \ref{Le: CIF} and Lemma \ref{Le: prototype}. Hence with the help of Lemma \ref{Le:sim_gen_hsf}, we may estimate the constants of positive definiteness of $\s M(0)$ and $\Re \s M'(0)$  on $R(\s M(0))$ and $N(\s M(0))$, respectively, also in terms of $d,c,\sup\{\Abs{N_n}_\infty;n\in\N\}$ and $\eps$. Note that we also have $R(\s M(0))=R(M(0))$. Remark \ref{rem:conv} implies the remaining assertion.
\end{proof}

\section*{Acknowledgments}
The author is indebted to Ralph Chill, Des McGhee, Rainer Picard and Sascha Trostorff for carefully reading the manuscript. The author is also indebted to the anonymous referee for a careful reading of this manuscript and for numerous constructive comments. M.~W.~carried out this work with partial financial support of the EPSRC grant EP/L018802/2: ``Mathematical foundations of metamaterials: homogenisation, dissipation and operator theory''. This is gratefully acknowledged. 

\bibliographystyle{plain}

\end{document}